\crefname{equation}{}{}
\crefname{algocf}{Algorithm}{Algorithms}
\crefname{equation}{}{} 
\colorlet{refkey}{orange!20}
\colorlet{labelkey}{blue!30}
\crefname{algocf}{Algorithm}{Algorithms}
\numberwithin{equation}{section}
\newtheorem{theorem}{Theorem}[section]
\newtheorem{proposition}[theorem]{Proposition}
\newtheorem{lemma}[theorem]{Lemma}
\newtheorem{claim}[theorem]{Claim}
\crefname{claim}{Claim}{Claims}
\newtheorem*{question*}{Question}
\theoremstyle{definition}
\newtheorem{definition}[theorem]{Definition}
\newtheorem*{definition*}{Definition}
\theoremstyle{remark}
\newtheorem*{remark}{Remark}
\newcommand{\snorm}[1]{\lVert#1\rVert}
\newcommand{\sang}[1]{\langle #1 \rangle}
\newcommand{\mb}{\mathbb}
\newcommand{\mbf}{\mathbf}
\newcommand{\mbm}{\mathbbm}
\newcommand{\mc}{\mathcal}
\newcommand{\mf}{\mathfrak}
\newcommand{\msf}{\mathsf}
\newcommand{\ol}{\overline}
\newcommand{\on}{\operatorname}
\newcommand{\wh}{\widehat}
\newcommand{\E}{\mathbb{E}}
\newcommand{\F}{\mathbb{F}}
\newcommand{\fB}{\mathfrak{B}}
\newcommand{\sB}{\mathsf{B}}
\newcommand{\TT}{\intercal}
\title{Popular differences for matrix patterns}
\author[Berger]{Aaron Berger}
\author[Sah]{Ashwin Sah}
\author[Sawhney]{Mehtaab Sawhney}
\author[Tidor]{Jonathan Tidor}
\address{Department of Mathematics, Massachusetts Institute of Technology, Cambridge, MA 02139, USA}
\thanks{Berger, Sah, Sawhney, and Tidor were supported by NSF Graduate Research Fellowship Program DGE-1745302.}
\email{\{bergera,asah,msawhney,jtidor\}@mit.edu}
\begin{document}

\begin{abstract}
The following combinatorial conjecture arises naturally from recent ergodic-theoretic work of Ackelsberg, Bergelson, and Best. Let $M_1$, $M_2$ be $k\times k$ integer matrices, $G$ be a finite abelian group of order $N$, and $A\subseteq G^k$ with $|A|\ge\alpha N^k$. If $M_1$, $M_2$, $M_1-M_2$, and $M_1+M_2$ are automorphisms of $G^k$, is it true that there exists a popular difference $d \in G^k\setminus\{0\}$ such that
\[\#\{x \in G^k: x, x+M_1d, x+M_2d, x+(M_1+M_2)d \in A\} \ge (\alpha^4-o(1))N^k.\]
We show that this conjecture is false in general, but holds for $G = \mathbb{F}_p^n$ with $p$ an odd prime given the additional spectral condition that no pair of eigenvalues of $M_1M_2^{-1}$ (over $\overline{\mathbb{F}}_p$) are negatives of each other. In particular, the ``rotated squares'' pattern does not satisfy this eigenvalue condition, and we give a construction of a set of positive density in $(\mathbb{F}_5^n)^2$ for which that pattern has no nonzero popular difference. This is in surprising contrast to three-point patterns, which we handle over all compact abelian groups and which do not require an additional spectral condition.
\end{abstract}

\maketitle

\section{Introduction}\label{sec:intro}
\subsection{Popular patterns and past results}\label{sub:past-results}
Using an argument of Varnavides \cite{Var59}, it is well-known that Roth's theorem \cite{Rot53} on three-term arithmetic progressions can be strengthened to guarantee at least $c_\alpha N^2$ arithmetic progressions in a set $A\subseteq[N]$ of size $\alpha N$. The constant $c_{\alpha}$ is known not to be polynomial in $\alpha$; in particular, modifying a well-known construction of Behrend \cite{Beh46} allows one to construct sets with $\alpha^{c\log(1/\alpha)}N^2$ three-term arithmetic progressions. However, Green \cite{Gre05}, showed that one has a ``popular'' common difference $d\neq 0$, i.e., a value $d\in[N]$ such that
\[\#\{a: a,a+d,a+2d\in A\}\ge(\alpha^3-o(1))N.\]
That is, the set behaves like a random set along certain structured differences, if not all of them. Green's proof involves an arithmetic regularity lemma, which is essentially equivalent to arithmetic regularity for the Gowers $U^2$-norm.

One can ask if this phenomenon holds for longer arithmetic progressions. The analogous result for four-term arithmetic progressions with $\alpha^4-o(1)$ on the right-hand side was proved by \cite{GT10} relying on a remarkable ``positivity'' identity \cite{GT10} (see \cite{Gre06} for a version over $\mb{F}_p^n$ for $p\ge 5$) in combination with the $U^3$-arithmetic regularity results of Green and Tao \cite{GT10}. However, surprisingly, an $\alpha^k-o(1)$ (or any polynomial) bound does not hold for $k$-term arithmetic progressions for $k\ge 5$ due to a construction of Ruzsa \cite[Appendix]{BHKR05}. These results were motivated by corresponding ergodic results of Bergelson, Host, and Kra \cite{BHKR05}, although the theorems do not directly transfer when studying popular differences (as opposed to Furstenberg's correspondence theorem for Szemer\'edi's theorem).

One may ask about popularity of more general patterns, for example $\{0,1,2,4\}$. (We use a set to refer to the pattern consisting of homothetic copies of that set; in this case, the pattern is $(a,a+d,a+2d,a+4d)$.) The proof of Green and Tao \cite{GT10} for four-term arithmetic progressions (and \cite{Gre06} over finite fields) immediately extends to patterns of the form $\{0,k_1,k_2,k_1+k_2\}$ for $k_1k_2(k_1+k_2)\neq 0$. Work of the second and third authors and Zhao \cite{SSZ20} shows that two-point, three-point, and these specific ``parallelogram'' four-point patterns are the only popular patterns over $\mb{Z}$.

Popularity of higher-dimensional patterns such as corners, $\{(0,0),(1,0),(0,1)\}$, was first studied by Mandache \cite{Man18} in the combinatorial setting (see \cite{Chu11,DS16} for related work in the ergodic theory setting), who showed over $\mb{F}_p^n$ that they are not $\alpha^3$-popular but do satisfy a weakened bound with $\alpha^4$ instead. Fox, the second and third authors, Stoner, and Zhao \cite{FSSSZ} showed that the tight bound is of the form $\alpha^4\tau(\alpha)$, where $\tau$ grows as $\alpha\to 0$, but is of the form $\alpha^{o(1)}$. Finally, the first author \cite{Ber19} showed the same behavior over $\mb{Z}^2$. The second and third authors and Zhao \cite{SSZ20} studied higher-dimensional patterns which are homothetic copies of a set and provide a nearly comprehensive classification.

\subsection{Our contributions}\label{sub:our-contrib}
The standard toolset of arithmetic regularity in higher-order Fourier analysis, which can prove popular difference results for three and four-point single-dimensional patterns, necessarily breaks to some extent when handling higher-dimensional corners (as pointed out in \cite{SSZ20}), and has not yet been successfully applied to four-point patterns such as squares for which the question remains open. However, it was noted by Prendiville \cite{Pre15} that classic single-dimensional techniques extend if one considers \emph{full-rank} matrix patterns (a collection which excludes corners and squares but includes a wide class of multidimensional configurations such as ``rotated corners'' -- also known as ``right isosceles triangles'' -- and ``rotated squares''), and he achieves versions of Szemer\'edi's theorem (for $k\le 4$ points) with good quantitative bounds in this setting. We continue in this line of work, achieving popular difference results of strength equal to the single-dimensional case, illustrating by comparison the suitability of these methods to full-rank patterns.

The main novelty of this paper lies in the popular difference results for four-point patterns, where we exhibit further behavior that does not appear even in Prendiville's work. In order to properly handle popularity of four-point patterns, we show that one must apply the method of arithmetic regularity in a manner that sees the spectral properties of the matrices defining the pattern. In particular, the counting lemma (which for four-point patterns over $\mb{F}_p^n$ relies heavily on equidistribution over parts in quadratic factors) becomes qualitatively distinct depending on the spectral structure of the matrices in the pattern (see \cref{thm:counting}). This subtlety is not present in earlier counting lemmas for scalar-valued patterns. This also translates concretely to an additional restriction that no pair of eigenvalues of an associated matrix can be negatives of each other for our method to produce a popular difference result (see \cref{thm:main}). To confirm that this behavior is genuine and not an artifact of the proof, in \cref{thm:counterexample} we exhibit a full-rank matrix pattern which does not satisfy the additional spectral condition imposed by \cref{thm:main} and for which the conclusion of the theorem is false. In particular, we show that rotated squares in $\mb{F}_5^n$ do not satisfy a popular difference result, at least with popularity $\alpha^4$.

\subsection{Summary of results}\label{sub:summary-results}
We first prove a popular differences result for all full-rank three-point patterns. A three-point pattern is full rank if it can be expressed in the form $\vec{x},\vec{x}+M_1\vec{d},\vec{x}+M_2\vec{d}$ where $M_1,M_2,M_1-M_2$ are invertible. One such example is ``rotated corners,'' which are of the form $(x,y),(x+a,y+b),(x+b,y-a)$. (By contrast, standard corners $(x,y),(x+a,y),(x,y+a)$ are not full rank.) As a special case, this resolves a conjecture of Ackelsberg, Bergelson, and Best \cite[Question 1.21]{ABB21}, which concerns the case of rotated corners specifically. Kova\v{c} \cite{Kov21} has independently proved this rotated corners conjecture with similar methods. 

\begin{theorem}\label{thm:3ptsZ}
Let $M_1, M_2$ be $k \times k$ invertible integer matrices so that $M_1-M_2$ is invertible. For any $\alpha,\epsilon > 0 $ there exists $N_0(\alpha,\epsilon,M_1,M_2)$ so that the following holds.  If $N \ge N_0$, then for any $A \subseteq [N]^k$, $|A| \ge \alpha N^k$, there is a popular difference $\vec d \neq 0$ so that
\begin{equation*}
\#\{\vec x \in [N]^k : \vec x, \vec x+M_1\vec d, \vec x+M_2\vec d \in A\}\ge (\alpha^3 - \epsilon) N^k.
\end{equation*}
\end{theorem}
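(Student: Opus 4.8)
\emph{Setup and reduction.} The plan is to reduce to a finite abelian group and run Green's Fourier-analytic ``arithmetic regularity plus positivity'' argument for three-term progressions, now with matrix coefficients. Choose a prime $N'=(1+o(1))N$ larger than $\lvert\det M_1\rvert$, $\lvert\det M_2\rvert$, and $\lvert\det(M_1-M_2)\rvert$ (all nonzero, since $M_1,M_2,M_1-M_2$ are invertible over $\mathbb{Q}$), so that $M_1,M_2,M_1-M_2$ act as automorphisms of $G:=(\mathbb{Z}/N'\mathbb{Z})^k$. Embedding $[N]^k\hookrightarrow G$ turns $A$ into a set of density $\alpha'=\alpha/(1+o(1))$, and it suffices to produce a nonzero $\vec d\in G$ having a representative of sup-norm $o(N)$ with $T_f(\vec d):=\mathbb{E}_{\vec x\in G}f(\vec x)f(\vec x+M_1\vec d)f(\vec x+M_2\vec d)\ge(\alpha')^3-o(1)$, where $f=\ind_A\colon G\to[0,1]$: for such a small $\vec d$ all but an $o(1)$-fraction of the $\vec x$ being counted yield honest patterns inside $[N]^k$ with integer difference $\vec d\ne0$, and $(\alpha')^3\lvert G\rvert\ge(\alpha^3-o(1))N^k$.

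\emph{Regularity and counting lemma.} Apply the $U^2$ arithmetic regularity lemma with a growth function (as in Green's proof) to write $f=f_{\mathrm{str}}+f_{\mathrm{sml}}+f_{\mathrm{unf}}$, where $f_{\mathrm{str}}=\mathbb{E}[f\mid\mathcal B]$ for a factor $\mathcal B$ generated by at most $M=M(\epsilon)$ characters of $G$, $\snorm{f_{\mathrm{sml}}}_2$ is tiny, and $\snorm{f_{\mathrm{unf}}}_{U^2}$ is as small as desired relative to $M$; note $f_{\mathrm{str}}\colon G\to[0,1]$ has mean $\alpha'$. The counting lemma asserts $\mathbb{E}_{\vec d}\,\lvert T_f(\vec d)-T_{f_{\mathrm{str}}}(\vec d)\rvert=o(1)$: telescoping, the difference breaks into terms each containing a factor of $f_{\mathrm{sml}}$ or of $f_{\mathrm{unf}}$ (with bounded functions in the other slots); the $f_{\mathrm{sml}}$ terms are controlled by $\snorm{f_{\mathrm{sml}}}_2$ and Cauchy--Schwarz, and the $f_{\mathrm{unf}}$ terms by a power of $\snorm{f_{\mathrm{unf}}}_{U^2}$ via a generalized von Neumann inequality (linearizing the absolute value by a $\pm1$-valued weight in the variable $\vec d$). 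The crucial point is that the changes of variables used there — passing from $(\vec x,\vec d)$ to a pair among $\vec x$, $\vec x+M_1\vec d$, $\vec x+M_2\vec d$ and then doubling both new variables — are bijections of $G^2$ \emph{exactly because} $M_1$, $M_2$, and $M_1-M_2$ are invertible over $G$; equivalently, the three matrix-valued forms are pairwise independent, so the system has Cauchy--Schwarz complexity one and $T_f$ is $U^2$-controlled. Since there are only three forms, no further spectral hypothesis on $M_1M_2^{-1}$ is needed here; that subtlety arises only for four-point patterns.

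\emph{Positivity and extraction.} On each atom of $\mathcal B$ the function $f_{\mathrm{str}}$ is essentially constant. Let $B\subseteq G$ be the Bohr set of those $\vec d$ for which $\chi(M_1\vec d)$ and $\chi(M_2\vec d)$ lie within $o(1)$ of $1$ for every generating character $\chi$ of $\mathcal B$, intersected with $\snorm{\vec d}_\infty=o(N)$ (one further frequency per coordinate). For $\vec d\in B$, translation by $M_1\vec d$ and by $M_2\vec d$ nearly fixes $f_{\mathrm{str}}$, so $T_{f_{\mathrm{str}}}(\vec d)\ge\mathbb{E}_{\vec x}f_{\mathrm{str}}(\vec x)^3-o(1)\ge(\mathbb{E} f_{\mathrm{str}})^3-o(1)=(\alpha')^3-o(1)$ by Jensen's inequality, since $t\mapsto t^3$ is convex on $[0,\infty)$ and $f_{\mathrm{str}}\ge0$. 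As $B$ is a Bohr set with $O(M)$ frequencies, $\lvert B\rvert\ge c(\epsilon)\lvert G\rvert$ for some $c(\epsilon)>0$ independent of $N$. Choosing the growth function small enough that $\mathbb{E}_{\vec d}\lvert T_f(\vec d)-T_{f_{\mathrm{str}}}(\vec d)\rvert<\tfrac12\epsilon\,c(\epsilon)$, the set of $\vec d$ where $T_f$ and $T_{f_{\mathrm{str}}}$ differ by more than $\epsilon/2$ has density below $c(\epsilon)$, hence misses some $\vec d\in B$; as $\lvert B\rvert\ge c(\epsilon)\lvert G\rvert>1$ for $N$ large, we may take this $\vec d$ nonzero. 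Then $T_f(\vec d)\ge(\alpha')^3-\epsilon/2-o(1)$, and undoing the reduction gives the claimed bound $(\alpha^3-\epsilon)N^k$ for $N$ large.

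\emph{Main obstacle.} The delicate parts are the counting lemma and its bookkeeping: adapting the scalar Cauchy--Schwarz and van der Corput steps to matrix-valued linear forms and checking that the substitutions involved are bijective — the one place full rank is genuinely used, and the source of the four-point spectral condition to come — together with calibrating the regularity growth function against the Bohr-set density so that the counting error is beaten by $c(\epsilon)$. The transfer from $[N]^k$ to $G$ is routine but must be run with the ``keep $\vec d$ small'' device so as not to lose a constant factor in the density. For three points none of this is serious; the substantial work in the paper is reserved for the four-point theorem.
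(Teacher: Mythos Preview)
Your overall strategy --- embed in $(\mathbb{Z}/p\mathbb{Z})^k$, apply $U^2$-arithmetic regularity, use a Bohr set to force $f_{\mathrm{str}}$ to be nearly constant along the pattern, and extract positivity by convexity --- is the same as the paper's. However, the counting lemma as you state it is false, and your sketched proof of it does not work. You claim $\mathbb{E}_{\vec d}\,\lvert T_f(\vec d)-T_{f_{\mathrm{str}}}(\vec d)\rvert=o(1)$, arguing that after ``linearizing the absolute value by a $\pm1$-valued weight $h(\vec d)$'' one gets a system controlled by $U^2$. But the resulting four-form system $\{\vec x,\,\vec x+M_1\vec d,\,\vec x+M_2\vec d,\,\vec d\}$ has Cauchy--Schwarz complexity $2$, not $1$: for each of the four forms, the remaining three span the whole $(\vec x,\vec d)$-plane, so no two-part partition isolates any form. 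Generalized von Neumann therefore only yields $U^3$-control. In fact the $L^1_d$ bound is simply false: over $\mathbb{Z}/p\mathbb{Z}$ with $M_1=1,M_2=2$ and $f(x)=\tfrac14\bigl(2+\cos(2\pi x^2/p)+\cos(4\pi x^2/p)\bigr)$, one has $f_{\mathrm{str}}\equiv\tfrac12$ and $\|f_{\mathrm{unf}}\|_{U^2}=O(p^{-1/4})$, yet a direct computation gives $T_f(d)-T_{f_{\mathrm{str}}}(d)=\tfrac{1}{256}\cos(4\pi d^2/p)+O(p^{-1/2})$, so $\mathbb{E}_d|T_f(d)-T_{f_{\mathrm{str}}}(d)|$ is bounded away from $0$.

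The paper avoids this by never attempting $L^1_d$ control. It fixes the weight $w=\mu_{B'}\ast\mu_{B'}$ from the outset (where $B'$ is the Bohr set forcing $M_1\vec d,M_2\vec d\in B(T,\gamma_1)$) and bounds $\bigl|\int (T_f-T_{f_{\mathrm{str}}})\,w\bigr|$ directly. The point of the convolution is that $\widehat{w}=|\widehat{\mu_{B'}}|^2\ge 0$, so $\|\widehat{w}\|_{\ell^1}=\|\mu_{B'}\|_{L^2}^2\le\|\mu_{B'}\|_{L^\infty}=O_{|T|,\gamma_1}(1)$; explicit Fourier expansion then gives a bound of $\|\widehat{f_{\mathrm{unf}}}\|_{\ell^\infty}\cdot\|\mu_{B'}\|_{L^2}^2$, which the growth function absorbs. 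Pigeonhole over $d$ in the support of $w$ finishes. Your argument is easily repaired along these lines: drop the absolute value, bound only $\mathbb{E}_{\vec d}\bigl(T_f(\vec d)-T_{f_{\mathrm{str}}}(\vec d)\bigr)\,w(\vec d)$ for this specific $w$, and pigeonhole on the weighted average rather than via Markov.
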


We additionally prove an analogous version of the result where the interval $[N]$ is replaced by an arbitrary compact abelian group $G$. See \cref{sec:three-point} for the precise statement and proof of this result.

We turn next to four-point patterns of matrices. There are a few natural restrictions on generic patterns $\vec x, \vec x+M_1\vec d, \vec x+M_2\vec d, \vec x+M_3\vec d$ that arise when trying to prove a popular differences result. First, we impose $M_3 = M_1 + M_2$, which is a generalization of the ``parallelogram'' condition in the popular differences result of Green and Tao \cite{GT10}. Second, we require that $M_1, M_2, M_1 - M_2, M_1+M_2$ are all invertible; in this case we call the pattern full rank.\footnote{
Axis-aligned squares are an example of a four-point pattern that is not full rank, and for which the version of popular differences we would like to prove is known to be false; see \cite[Theorem~3.1]{SSZ20}.} 
The combination of these two conditions is analogous to the ``admissibility'' condition of \cite{ABB21}, and essentially appears in \cite{Pre15}. One might guess that they are sufficient to guarantee popular differences. In this paper we show that this guess is incorrect by demonstrating  the necessity of an additional spectral condition on the pattern. In the spirit of the finite field philosophy advocated by Green \cite{Gre05b}, we restrict attention to the finite field model $G=\mb{F}_p^n$ with $p$ an odd prime. We suspect our methods can be extended to handle more general abelian groups, but choose to avoid the complexity of the inverse theorems for the $U^3$-norm over general abelian groups.

\begin{theorem}\label{thm:main}
Fix $k\ge 1$ and $p$ an odd prime. Let $M_1,M_2$ be $k\times k$ matrices with coefficients in $\mb{F}_p$ such that $M_1$, $M_2$, $M_1-M_2$, and $M_1+M_2$ are invertible and no pair of eigenvalues of $M_1M_2^{-1}$ (viewed over $\overline{\mb{F}}_p$) are negatives of each other. For $\alpha,\epsilon > 0$, there exists $n_0(\alpha,\epsilon,p)$ such that the following holds. If $n\ge n_0$, then for any $A\subseteq (\mb{F}_p^n)^k$, $|A|\ge\alpha p^{nk}$, there is a popular difference $\vec d\neq 0$ so that
\begin{equation*}
\#\{\vec x \in (\mb{F}_p^n)^k : \vec x, \vec x+M_1\vec d, \vec x+M_2\vec d, \vec x+(M_1+M_2)\vec d \in A\}\ge (\alpha^4 - \epsilon) p^{nk}.
\end{equation*}
In fact, there are $\Omega_{\alpha,\epsilon,p}(p^{nk})$ values of $\vec d$ that work.\end{theorem}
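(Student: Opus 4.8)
The plan is to run the $U^3$ arithmetic regularity method, exploiting the two features that distinguish four‑point parallelograms from three‑point patterns: a counting lemma over quadratic factors (\cref{thm:counting}) whose shape depends on the spectrum of $M_1M_2^{-1}$, and a soft positivity estimate replacing the Green--Tao identity. First I would set $f=\ind_A$ and apply $U^3$ arithmetic regularity over $\F_p^n$ to get $f=f_1+f_{\mathrm{sml}}+f_{\mathrm{unf}}$, where $f_1=\E[f\mid\mc B]$ for a quadratic factor $\mc B$ of complexity $O_{\alpha,\epsilon,p}(1)$ and rank as large as desired, $\snorm{f_{\mathrm{sml}}}_2\le\delta$, $\snorm{f_{\mathrm{unf}}}_{U^3}\le\delta$, with $\delta$ chosen after $\mc B$ is revealed; so $f_1(\vec x)=F(\ell(\vec x),q(\vec x))$ with $\ell\colon(\F_p^n)^k\to\F_p^r$ linear, $q=(q_1,\dots,q_s)$ a high‑rank tuple of quadratics, $0\le F\le1$, $\E F\ge\alpha$. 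Writing $T_g(\vec d)=\E_{\vec x}\,g(\vec x)g(\vec x+M_1\vec d)g(\vec x+M_2\vec d)g(\vec x+(M_1+M_2)\vec d)$, a generalized von Neumann estimate (valid for full‑rank parallelogram patterns — this is where invertibility of $M_1,M_2,M_1\pm M_2$ enters) together with a Cauchy--Schwarz change of variables to absorb $f_{\mathrm{sml}}$ gives $\E_{\vec d}\lvert T_f(\vec d)-T_{f_1}(\vec d)\rvert=O(\delta)$, so it is enough to produce $\Omega_{\alpha,\epsilon,p}(p^{nk})$ values of $\vec d$ with $T_{f_1}(\vec d)\ge\alpha^4-\tfrac{\epsilon}{2}$.

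Next I would restrict $\vec d$ to $V=\{\vec d:\ell(M_1\vec d)=\ell(M_2\vec d)=0\}$, a subspace of codimension $\le2r$, so $\lvert V\rvert=\Omega_{\alpha,\epsilon,p}(p^{nk})$. For $\vec d\in V$ all four points of the pattern carry the same linear datum $\ell(\vec x)$, and with $B_j$ the symmetric bilinear form of $q_j$ the parallelogram cancellation
\[ q_j(\vec x)-q_j(\vec x+M_1\vec d)-q_j(\vec x+M_2\vec d)+q_j(\vec x+(M_1+M_2)\vec d)=B_j(M_1\vec d,M_2\vec d) \]
shows the quadratic data at the four points equals $w$, $w+y_1$, $w+y_2$, $w+y_1+y_2+e(\vec d)$ up to $\vec d$‑dependent shifts, where $w=q(\vec x)$, $y_a=(B_j(\vec x,M_a\vec d))_j$, and $e(\vec d)=(B_j(M_1\vec d,M_2\vec d))_j\in\F_p^s$. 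Feeding this into \cref{thm:counting} — the step sensitive to the spectral structure — I would obtain, for all $\vec d\in V$ outside a set of size $o(p^{nk})$,
\[ T_{f_1}(\vec d)=G(e(\vec d))+o(1),\qquad G(\sigma):=\E_{t,w,y_1,y_2}F(t,w)F(t,w+y_1)F(t,w+y_2)F(t,w+y_1+y_2+\sigma). \]

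Two estimates then finish the proof. (i) \emph{Positivity}: averaging $G$ over $\sigma\in\F_p^s$ collapses the last factor to its fibrewise mean $\beta_t=\E_wF(t,w)$, so $\E_\sigma G(\sigma)=\E_t\beta_t^4\ge(\E_t\beta_t)^4\ge\alpha^4$ by Jensen; since $0\le G\le1$, the set $S=\{\sigma:G(\sigma)\ge\alpha^4-\tfrac{\epsilon}{2}\}$ has $\lvert S\rvert\ge\tfrac{\epsilon}{2}p^s$. (ii) \emph{Equidistribution}: $e|_V$ is equidistributed on $\F_p^s$, because for $\mu\ne0$ the quadratic form $\sum_j\mu_je_j$ has Gram matrix $\tfrac12\widetilde M_2^{\TT}\mc L\bigl(\sum_j\mu_jQ_j\bigr)\widetilde M_2$, where $\widetilde M_i$ denotes the action of $M_i$ on $(\F_p^n)^k$ and $\mc L(X)=\widetilde N^{\TT}X+X\widetilde N$ with $\widetilde N$ the action of $M_1M_2^{-1}$. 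The eigenvalues of the Lyapunov operator $\mc L$ are the pairwise sums of eigenvalues of $M_1M_2^{-1}$, so the hypothesis is exactly invertibility of $\mc L$; and since $\widetilde N$ has $\le k$ distinct eigenvalues and Jordan blocks of size $\le k$, a Jordan‑form computation upgrades invertibility to $\on{rank}\bigl(\mc L(X)\bigr)\ge\on{rank}(X)/O(k^3)$. As $\sum_j\mu_jQ_j$ is high‑rank, so is $\sum_j\mu_je_j$, and restriction to $V$ costs only $O(1)$ in rank; standard Gauss‑sum bounds then give $\lvert\{\vec d\in V:e(\vec d)=\sigma\}\rvert=(1+o(1))\lvert V\rvert/p^s$ for every $\sigma$. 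Combining (i) and (ii), $\Omega_{\alpha,\epsilon,p}(p^{nk})$ values $\vec d\in V$ satisfy $e(\vec d)\in S$; discarding the $o(p^{nk})$ exceptional $\vec d$ and, by Markov, the $O(\delta/\epsilon)$‑fraction with $\lvert T_f-T_{f_1}\rvert>\tfrac{\epsilon}{4}$, we retain $\Omega_{\alpha,\epsilon,p}(p^{nk})$ nonzero $\vec d$ with $T_f(\vec d)\ge G(e(\vec d))-\tfrac{\epsilon}{4}-o(1)\ge\alpha^4-\epsilon$ once $n$ is large.

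The hard part will be the interplay between \cref{thm:counting} and the spectral hypothesis: one must show that under the eigenvalue condition the dependence of the structured count on $\vec d$ is ``quadratically generic'' — governed by the high‑rank forms $e_j$ — rather than rigid, which amounts to the invertibility and rank‑robustness of $\mc L$. This is exactly where the method sees the spectrum of $M_1M_2^{-1}$: when some pair of eigenvalues are negatives, $\mc L$ has a kernel, one can build a set $A$ whose regularity factor realizes a low‑rank $e_j$, and then $e(\vec d)$ ranges over too small a subset of $\F_p^s$ to be forced into $S$ — the mechanism behind \cref{thm:counterexample} for the rotated squares pattern.
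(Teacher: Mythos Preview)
Your argument breaks at the claimed per-$\vec d$ formula
\[
T_{f_1}(\vec d)=G(e(\vec d))+o(1),\qquad G(\sigma)=\E_{t,w,y_1,y_2}F(t,w)F(t,w+y_1)F(t,w+y_2)F(t,w+y_1+y_2+\sigma).
\]
This asserts that, for fixed $\vec d$, the $2s$ linear forms $y_{a,j}(\vec x)=B_j(\vec x,M_a\vec d)$ jointly equidistribute on $(\F_p^s)^2$ as $\vec x$ varies. They do not. Already for $k=1$ the $M_a$ are scalars and $y_{2,j}=(M_2/M_1)y_{1,j}$ identically, so $(y_1,y_2)$ lives on a line; more generally the dependency among the $y_{a,j}$ encodes the action of $M_1M_2^{-1}$ and is present for \emph{every} $\vec d$, not merely an exceptional set. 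Consequently $T_{f_1}(\vec d)$ depends on strictly more $\vec d$-data than $e(\vec d)$ (for $k=1$ it depends on the whole vector $q(\vec d)$, through four affinely dependent shifts), and your easy positivity $\E_\sigma G(\sigma)\ge\alpha^4$ is applied to the wrong functional. Note too that \cref{thm:counting} is a joint-in-$(X,D)$ statement for a \emph{$k$-symmetrized} factor and yields equidistribution over $\Lambda_J^\perp$, which is strictly smaller than your ``parallelogram'' space; it does not produce a per-$\vec d$ count for a generic scalar quadratic factor. (A secondary issue: the bound $\E_{\vec d}|T_f(\vec d)-T_{f_1}(\vec d)|=O(\delta)$ does not follow from $U^3$ control alone --- the Cauchy--Schwarz you sketch produces a $U^4$-norm.)

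The paper's route differs in both ingredients. First, it works with the $k$-symmetrized factor of \cref{arith-reg-lem}, so the quadratic data at $X$ is the tuple of $k\times k$ matrices $(XM_iX^\TT,XN_iX^\TT)$; this is exactly what records the bilinear pieces $B(\vec x,M_a\vec d)$ for all $a$ simultaneously via the single object $XM_iD^\TT$, making the joint counting lemma \cref{thm:counting} available. Second, positivity is extracted not by averaging over a free parameter but from the algebraic shape of the constraint space: under the eigenvalue hypothesis, \cref{lem:A-algebra} gives $J\in\F_p[J^2]$, \cref{lem:f-eq} then pins down $\Lambda_J$, and one checks (Claim~\ref{thm:lambda-structure}) that $(M^{(1)},M^{(2)},M^{(3)},M^{(4)})\in\Lambda_J^\perp$ iff $(M^{(1)},M^{(2)})$ and $(M^{(4)},M^{(3)})$ lie in the same coset of $\Omega_J^\perp$. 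Two Cauchy--Schwarz steps on this $(1,2)\leftrightarrow(4,3)$ symmetry give the $\alpha^4$ lower bound for the average over $D\in H$, after which Markov yields many good $\vec d$. Your Lyapunov observation --- that $X\mapsto\widetilde N^\TT X+X\widetilde N$ is invertible exactly under the eigenvalue hypothesis --- is a genuine reflection of the same phenomenon, but in the paper the spectral condition enters to force the symmetry of $\Lambda_J$, not to equidistribute a scalar obstruction $e(\vec d)$.
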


Furthermore, we show that one cannot completely remove the spectral condition.
\begin{theorem}\label{thm:counterexample}
There is an absolute constant $c > 0$ such that the following holds. If $\alpha\in(0,c)$, then for all sufficiently large $n$ (depending on $\alpha$) there is a set $A\subseteq(\mb{F}_5^n)^2$ satisfying $|A|\ge\alpha 5^{2n}$ and
\[\max_{(a,b)\neq 0}\#\{(x,y): (x,y),(x+a,y+b),(x+b,y-a),(x+a+b,y+b-a)\in A\}\le (1-c)\alpha^45^{2n}.\]
\end{theorem}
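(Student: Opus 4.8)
The pattern $(x,y),(x+a,y+b),(x+b,y-a),(x+a+b,y+b-a)$ is the "rotated squares" pattern, and the relevant matrices are $M_1 = \begin{pmatrix}1&0\\0&1\end{pmatrix}$ is not quite right — rather $M_1$ corresponds to shifting by $(a,b)$ and $M_2$ to shifting by $(b,-a)$, so $M_2 M_1^{-1}$ is rotation by $90^\circ$, whose eigenvalues over $\overline{\mb F}_5$ are $\pm i = \pm 2, \pm 3$ (note $2^2 = 4 = -1$ in $\mb F_5$), and these two eigenvalues $2$ and $3 = -2$ are negatives of each other, so the spectral hypothesis of \cref{thm:main} fails. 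The plan is to build $A$ by pulling back a carefully chosen set in a bounded-dimensional quotient via a quadratic (degree-2 polynomial) phase, exploiting exactly the failure of equidistribution that the spectral condition was designed to rule out.

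\medskip

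\textbf{Step 1: Reduce to a local count via a quadratic phase.} I would look for a set of the form $A = \{(x,y) \in (\mb F_5^n)^2 : Q(x,y) \in S\}$ where $Q : (\mb F_5^n)^2 \to \mb F_5$ is a fixed quadratic form (bilinear in a suitable sense) and $S \subseteq \mb F_5$ is chosen later; more flexibly, take $A$ to be a union of level sets of a finite collection of linear and quadratic forms, i.e.\ $A = \pi^{-1}(B)$ for $\pi$ landing in $\mb F_5^d \times (\text{quadratic data})$ with $d = O(1)$. The density $\alpha$ is then controlled by $|B|$ divided by the size of the target. The key computation is: for the rotated-squares pattern, the four points $(x,y), (x,y)+(a,b), (x,y)+(b,-a), (x,y)+(a+b,b-a)$ feed into $Q$, and because rotation by $90^\circ$ is an eigenvalue-$(-1)$-pairing isometry, the four quadratic values satisfy a \emph{nontrivial algebraic relation} not forced by the generic parallelogram identity $Q_1 - Q_2 - Q_3 + Q_4 = \text{(linear)}$. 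Concretely I expect something like: writing $v = (a,b)$ and letting $\rho$ denote the $90^\circ$ rotation, the increments $Q(x + v) - Q(x)$ and $Q(x + \rho v) - Q(x)$ are linked by an identity reflecting $\rho^\TT \rho = I$ and $\rho^2 = -I$, forcing the popular-difference count to concentrate on a lower-dimensional set of $(a,b)$ in a way that depletes it below $\alpha^4$. Isolating this identity — finding the precise quadratic $Q$ whose increment data over the four pattern points is genuinely constrained — is the crux.

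\medskip

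\textbf{Step 2: Optimize the combinatorial "local" problem.} Once the pullback structure is in place, the problem becomes finite: choose $B$ in the bounded target space to (i) have the right relative density $\approx \alpha$, and (ii) make $\max_{(a,b)\ne 0}$ of the (appropriately weighted) four-fold pattern count strictly below $\alpha^4$ times the total, using the algebraic constraint from Step 1 that ties the four coordinate values together. I would set this up as: for each nonzero difference class, the count is an average of $\mbm{1}_B \otimes \cdots$ over a subvariety, and I want to exhibit a single $B$ beating $\alpha^4$ uniformly over all differences. A natural candidate is an interval or arithmetic-progression-like set in $\mb F_5$ composed with a "twist"; one then checks $(1-c)$ savings for an explicit small $c$ by a direct (finite, $p=5$) calculation, taking $\alpha$ small so that lower-order error terms from equidistribution of the quadratic factor (controlled by $n$ large, via standard Gauss-sum / Weyl bounds) are negligible.

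\medskip

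\textbf{Step 3: Transfer back and handle equidistribution errors.} Finally, I would confirm that for $n$ large the actual count $\#\{(x,y) : \text{4 points} \in A\}$ matches the idealized local count up to $o_n(1)$ error: this is a counting-lemma-type estimate, where the quadratic factor equidistributes along the pattern \emph{except} on the constrained locus identified in Step 1 — and it is precisely there that the deficiency lives. The density lower bound $|A| \ge \alpha 5^{2n}$ follows since $Q$ equidistributes (each fiber has size $\approx 5^{2n}/5 = 5^{2n-1}$ up to $o(1)$), so $\alpha \ge |S|/5 - o(1)$, and we pick $|S|$ accordingly. The main obstacle, I expect, is Step 1: correctly identifying the quadratic form and the exact algebraic relation among the four pattern values that the negated-eigenvalue pair produces, and verifying it genuinely obstructs $\alpha^4$-popularity rather than merely complicating the count — essentially reverse-engineering the failure mode of the counting lemma \cref{thm:counting} for this specific pattern over $\mb F_5$.
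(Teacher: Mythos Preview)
Your proposal correctly captures the \emph{first stage} of the paper's construction: defining $A$ as the pullback of a small set under a quadratic map (the paper uses $f_1(x,y)=g_1(x\cdot x,\,x\cdot y)$ with $g_1$ the indicator of an explicit ten-point set in $\mb F_5^2$), verifying equidistribution of the four quadratic images over a $5$-dimensional subspace, and checking by a finite computation that the resulting count is strictly below $\alpha^4$. Your identification of the failure mode --- an extra linear relation among the four quadratic values coming from the negated-eigenvalue pair --- is exactly right.

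The genuine gap is that this construction does \emph{not} control all nonzero differences, and you have not noticed this. The equidistribution in your Step~3 (the paper's \cref{prop:counterexample-equidistribution}) requires $a$ and $b$ to be linearly independent over $\mb F_5$; when $b=\lambda a$ for some $\lambda\in\mb F_5$, or when $a=0$ or $b=0$, the auxiliary linear forms $x\cdot a$, $x\cdot b$, $a\cdot y$ degenerate and the $8$-tuple no longer equidistributes on $\Lambda_2'$. For those $O(5^n)$ special differences the count in your quadratic-pullback set can be far \emph{above} $\alpha^4$ (there is no reason it should not be of order $\alpha^2$ or larger), and a single quadratic construction cannot repair this. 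The paper handles this with two further, genuinely different layers: a Mandache/Ruzsa--Szemer\'edi random hypergraph gadget (\cref{sub:fixing-construction}) to kill the four diagonal directions $b=\pm a,\pm 2a$, and then a random-affine-shift of a $4$-AP-free set (\cref{sub:finishing-construction}) to kill the axis directions $a=0$ and $b=0$. Each layer multiplies into $f_1$ a new $[0,1]$-valued function that leaves the generic-direction count essentially unchanged (up to a power of the new density) while driving the special-direction counts strictly below the fourth power of the total density; concentration (Azuma--Hoeffding) is then used to pass from expected to actual counts. Without addressing these degenerate differences your argument does not yield the uniform bound $\max_{(a,b)\neq 0}$ claimed in the theorem.
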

Here the associated matrices are
\[M_1 = \begin{bmatrix}1&0\\0&1\end{bmatrix},\quad M_2 = \begin{bmatrix}0&-1\\1&0\end{bmatrix}.\]
Note that the eigenvalues of $M_1M_2^{-1}$ indeed are negatives of each other. We believe it is likely that one can construct a counterexample for all $k\times k$ matrices with some pair of negated eigenvalues by lifting the ideas involved in this construction.

Although there is no direct implication, this can be seen as a combinatorial finite field analogue of \cite[Question~1.11]{ABB21} and we expect our counterexample can be extended to the ergodic setting. In particular, we answer the combinatorial analogue in the negative but point to a potential new condition under which their question might be resolved positively.

\subsection{Notation and outline}\label{sub:outline}
We use $O,o,\Omega$ as standard asymptotic notation. Subscripts in said notation denote dependence of the implicit constants on those subscripts.

The majority of this paper, \cref{sec:gowers-norms,sec:arith-reg,sec:equidistribution,sec:4-point-popular-differences}, is devoted to the proof of \cref{thm:main}, the popular difference result for four-point patterns. See \cref{sec:gowers-norms} for an outline of that argument. In \cref{sec:counterexample}, we construct the counterexample that proves \cref{thm:counterexample}. Finally, in \cref{sec:three-point}, we show \cref{thm:3ptsZ}, the three-point pattern result.

\subsection*{Acknowledgements}
We thank our advisor Yufei Zhao for introducing us to the study of popular differences in additive combinatorics.

\section{Gowers norms and arithmetic regularity}
\label{sec:gowers-norms}

The proof of \cref{thm:main} proceeds in three steps, following the now-standard framework of the arithmetic regularity method.

\smallskip

First, we show that the matrix patterns we are interested in are controlled by an appropriate Gowers $U^s$-norm. Results of this nature are sometimes referred to as ``generalized von Neumann theorems''. The definition of the Gowers norms and the proof of this result are given in this section.

\smallskip

Second, we prove an arithmetic regularity lemma, which gives a decomposition of an arbitrary function $f\colon G\to\mb{C}$ as $f=f_{\on{str}}+f_{\on{sml}}+f_{\on{psr}}$ into a ``structured'', ``small'', and ``pseudorandom'' piece. For our application in groups $G^k$, it will be necessary to carefully define ``structured'' in a way that is adapted to the product structure of $G^k$. This definition and the proof of this result is given in \cref{sec:arith-reg}.

\smallskip

Third, we prove novel equidistribution results in order to understand the counts of matrix patterns inside the structured piece, $f_{\on{str}}$. These results occur in \cref{sec:equidistribution}. Combining these three steps, we prove \cref{thm:main} in \cref{sec:4-point-popular-differences}.

\begin{definition}\label{def:gowers-norm}
Fix an integer $s\ge 1$ and a finite abelian group $G$. For a function $f\colon G\to\mb{C}$, the \emph{Gowers $U^s$-norm} is defined by
\[\snorm{f}_{U^s(G)} = \left(\mb{E}_{x,h_1,\ldots,h_s\in G}\prod_{\omega\in\{0,1\}^s}\mc{C}^{|\omega|}f(x+\omega_1h_1 + \cdots + \omega_sh_s)\right)^{1/2^s},\]
where $\mc{C}$ denotes the complex conjugation operator and $|\omega| = \omega_1 + \cdots + \omega_s$.
\end{definition}

It is well-known that the above is indeed a norm when $s\ge 2$. (For $s = 1$ it is the seminorm $f\mapsto |\mb{E}_{x\in G}f(x)|$, so the term ``Gowers norm'' is a slight misnomer.) A useful equivalent definition is that \[\|f\|_{U^s(G)}^{2^s}=\mb{E}_{h\in G}\|\partial_h f\|_{U^{s-1}(G)}^{2^{s-1}}\] where the \emph{multiplicative derivative} $\partial_h f$ is defined by $(\partial_h f)(x)=f(x)\ol{f(x+h)}$.

We now prove that full-rank matrix patterns are controlled by an appropriate $U^s$-norm. The typical setup in this paper is to consider a pattern of the form $\vec x+M_1\vec d,\vec x+ M_2\vec d,\ldots,\vec x+ M_s\vec d$ in $G^k$ where $M_1,\ldots, M_s$ are $k\times k$ matrices with certain non-degeneracy conditions. In particular we assume that $M_i$ and $M_i-M_j$ are invertible for each $i\neq j$.

This lemma is true even in the general setting where we replace the matrix $M_i$ acting on $G^k$ by an arbitrary autmorphism $A_i$ acting on $G^k$. In this general setting, the product structure on $G^k$ is no longer important. As the proof of the more general version is no more difficult than the original result, we include it here. The proof follows by an application of the Cauchy--Schwarz inequality;  similar results for specific patterns are implicit in the literature (e.g., \cite{Pre15}).

\begin{lemma}\label{lem:gowers-inequality}
Let $s\ge 2$, and $G$ be a finite abelian group. Let $A_1,\ldots,A_s$ be automorphisms of $G$ such that $A_i-A_j$ is an automorphism for each $i\neq j$. Then for functions $f_i\colon G\to\mb{C}$ satisfying $\snorm{f_i}_\infty\le 1$ we have
\[|\mb{E}_{x,d\in G}f_1(x+A_1d)\cdots f_s(x+A_sd)|\le\min_{i\in[s]}\snorm{f_i}_{U^{s-1}(G)}.\]
\end{lemma}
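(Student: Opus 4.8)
The plan is to prove the inequality with the single quantity $\snorm{f_1}_{U^{s-1}(G)}$ on the right-hand side; the minimum over $i\in[s]$ then follows immediately by relabeling, since permuting the indices permutes the automorphisms $A_i$ and hence preserves all the hypotheses. I would induct on $s\ge 2$. For the base case $s=2$, substitute $y=x+A_1d$ to rewrite the average as $\mb{E}_{y,d}f_1(y)f_2\bigl(y+(A_2-A_1)d\bigr)$; since $A_2-A_1$ is an automorphism, $(A_2-A_1)d$ is uniform on $G$, so the average factors as $(\mb{E}_y f_1(y))(\mb{E}_e f_2(e))$, whose modulus is at most $|\mb{E}_y f_1(y)|=\snorm{f_1}_{U^1(G)}$ by $\snorm{f_2}_\infty\le1$.

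For the inductive step ($s\ge3$), let $T$ denote the average in question. First substitute $w=x+A_sd$, so that $f_s$ is evaluated at $w$ and each remaining $f_i$ at $w+(A_i-A_s)d$; applying Cauchy--Schwarz in $w$ and using $\snorm{f_s}_\infty\le1$ gives
\[|T|^2\le\mb{E}_w\Bigl|\mb{E}_d\prod_{i=1}^{s-1}f_i\bigl(w+(A_i-A_s)d\bigr)\Bigr|^2.\]
Expanding the square and writing the second copy of the summation variable as $d+e$, the two factors attached to each index $i$ combine, and the right-hand side becomes $\mb{E}_e\,\mb{E}_{w,d}\prod_{i=1}^{s-1}g_i^{(e)}\bigl(w+(A_i-A_s)d\bigr)$, where $g_i^{(e)}(u):=f_i(u)\ol{f_i(u+(A_i-A_s)e)}$, each of sup norm at most $1$. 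For each fixed $e$, this inner average is exactly an instance of the lemma at level $s-1$, with the $s-1$ matrices $A_1-A_s,\dots,A_{s-1}-A_s$: these are automorphisms, and their pairwise differences $A_i-A_j$ are automorphisms, so the hypotheses are inherited. By the inductive hypothesis the inner average is therefore at most $\snorm{g_1^{(e)}}_{U^{s-2}(G)}$ in modulus.

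It remains to observe that $g_1^{(e)}=\partial_{(A_1-A_s)e}f_1$, so $|T|^2\le\mb{E}_e\snorm{\partial_{(A_1-A_s)e}f_1}_{U^{s-2}(G)}$. Applying Jensen's inequality to the convex map $t\mapsto t^{2^{s-2}}$ and then the substitution $h=(A_1-A_s)e$ (legitimate precisely because $A_1-A_s$ is an automorphism), the right-hand side is at most $\bigl(\mb{E}_h\snorm{\partial_hf_1}_{U^{s-2}(G)}^{2^{s-2}}\bigr)^{1/2^{s-2}}=\bigl(\snorm{f_1}_{U^{s-1}(G)}^{2^{s-1}}\bigr)^{1/2^{s-2}}=\snorm{f_1}_{U^{s-1}(G)}^2$, using the recursive description $\snorm{f}_{U^t}^{2^t}=\mb{E}_h\snorm{\partial_hf}_{U^{t-1}}^{2^{t-1}}$ recorded above. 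Taking square roots closes the induction.

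The argument is not deep — it amounts to one Cauchy--Schwarz step per induction level — so I expect the only point requiring genuine care is bookkeeping: one must choose the substitutions so that every ``difference of matrices'' that arises remains an automorphism (which is where the hypothesis $A_i-A_j\in\on{Aut}(G)$ is used, most crucially for the final change of variables $h=(A_1-A_s)e$), and one should match the resulting average over $e$ directly against the recursive definition of the Gowers norm rather than re-expanding it by hand.
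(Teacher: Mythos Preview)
Your proof is correct and follows essentially the same approach as the paper: induction on $s$, with the inductive step consisting of shifting so that $f_s$ is evaluated at the outer variable, applying Cauchy--Schwarz in that variable, and then invoking the inductive hypothesis on the maps $A_1-A_s,\ldots,A_{s-1}-A_s$ together with the recursive definition of the Gowers norm (after the change of variables $h=(A_1-A_s)e$). The paper's write-up is nearly identical, including the explicit check that the differences $(A_i-A_s)-(A_j-A_s)=A_i-A_j$ remain automorphisms.
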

\begin{proof}
We induct on $s$. For $s=2$, note that 
\begin{align*}
|\mb{E}_{x,d\in G}f_1(x+A_1d)f_2(x+A_2d)| &= |\mb{E}_{x,d\in G}f_1(x)f_2(x+(A_2-A_1)d)| \\
&= |\mb{E}_{x,y\in G}f_1(x)f_2(y)|\\
&=\snorm{f_1}_{U^{1}(G)} \snorm{f_2}_{U^{1}(G)}.
\end{align*}
Since $\snorm{f_i}_\infty\le 1$, the result follows in this case. Now suppose $s\ge 3$. We have
\begin{align*}
|\mb{E}_{x,d\in G}&f_1(x+A_1d)\cdots f_s(x+A_sd)|\\
&= |\mb{E}_{x,d\in G}f_1(x+(A_1-A_s)d)\cdots f_{s-1}(x+(A_{s-1}-A_s)d)f_s(x)|\\
&\le\mb{E}_{x}|\mb{E}_{d}f_1(x+(A_1-A_s)d)\cdots f_{s-1}(x+(A_{s-1}-A_s)d)|\\
&\le\left(\mb{E}_{x}|\mb{E}_{d}f_1(x+(A_1-A_s)d)\cdots f_{s-1}(x+(A_{s-1}-A_s)d)|^2\right)^{1/2}\\
&=\big(\mb{E}_{x}\mb{E}_{d,h}f_1(x+(A_1-A_s)d)\cdots f_{s-1}(x+(A_{s-1}-A_s)d)\\
&\qquad\cdot\ol{f_1}(x+(A_1-A_s)d+(A_1-A_s)h)\cdots\ol{f_{s-1}}(x+(A_{s-1}-A_s)d+(A_{s-1}-A_s)h)\big)^{1/2}.
\end{align*}
To bound the last expression, we apply the induction hypothesis with the maps $A_1-A_s,\ldots,A_{s-1}-A_s$ and the functions $\partial_{(A_i-A_s)h}f_i$. Note that by hypothesis, the maps $A_i-A_s$ are automorphisms as are $(A_i-A_s)-(A_j-A_s)$ for $i\neq j$. Therefore we obtain
\begin{align*}
|\mb{E}_{x,d\in G}f_1(x+A_1d)\cdots f_s(x+A_sd)|
&\le\left(\mb{E}_{h}\snorm{\partial_{(A_1-A_s)h}f_1}_{U^{s-2}(G)}\right)^{1/2} \\
&\le\left(\mb{E}_{h}\snorm{\partial_{(A_1-A_s)h}f_1}_{U^{s-2}(G)}^{2^{s-2}}\right)^{1/2^{s-1}}\\
&= \snorm{f_1}_{U^{s-1}(G)}.
\end{align*}
The last equality comes from the recursive definition of the Gowers norms as well as the fact that $A_1-A_s$ is an automorphism on $G$. By symmetry, the same holds for $f_2,\ldots,f_s$, completing the proof.
\end{proof}

\section{The \texorpdfstring{$U^3$}{U3}-arithmetic regularity lemma}\label{sec:arith-reg}

From now on until \cref{sec:three-point}, we restrict our attention to the case where $G=\mb{F}_p^n$ and $p$ is an odd prime. The goal of this section is to prove a $U^3$-arithmetic regularity lemma for functions $f\colon G^k\to\mb{C}$. Since $G^k\cong\mb{F}_p^{nk}$, we could apply a standard result (say \cite[{Proposition 3.12}]{Gre06}) to deduce some $U^3$-regularity statement. However such regularity statement would ignore the product structure on $G^k$ which will become very important in our application.

The main novelty of this section is our definition of a $k$-symmetrized quadratic factor which gives an appropriate notion of structured function adapted to the product structure of $G^k$. We then prove \cref{arith-reg-lem}, our $k$-symmetrized $U^3$-arithmetic regularity lemma. The structure of the proof closely follows \cite{Gre06}.

An element $\vec x\in G^k$ is a tuple $\vec x=(x_1,\ldots,x_k)$ with $x_1,\ldots,x_k\in\mb{F}_p^n$. It will simplify the following arguments to introduce the following slightly awkward notation: we view the elements of $G^k$ as $k\times n$ matrices $X$ where the rows of $X$ correspond to the elements of the $k$-tuple. In particular, the element $\vec x\in G^k$ is alternatively represented as \[X=\begin{pmatrix} \frac{\hspace{35pt}}{} x_1^\TT \frac{\hspace{35pt}}{}\\ \vdots \\ \frac{\hspace{35pt}}{} x_k^\TT \frac{\hspace{35pt}}{} \end{pmatrix}. \]

Finally, define $\mc{S}_k$ (respectively, $\mc{S}_k'$) to be the set of symmetric (respectively, skew-symmetric) matrices in $\mb{F}_p^{k\times k}$.

\begin{definition}
A ($k$-)\emph{symmetrized quadratic factor} $\mf{B}=(\mf{B}_1,\mf{B}_2,\mf{B}_3)$ is given by a list $\mf{B}_1=(r_1,\ldots,r_{d_1})$ of column vectors in $\mb{F}_p^n$, a list $\mf{B}_2=(M_1,\ldots,M_{d_2})$ of symmetric matrices in $\mb{F}_p^{n\times n}$, and a list $\mf{B}_3=(N_1,\ldots,N_{d_3})$ of skew-symmetric matrices in $\mb{F}_p^{n\times n}$. The \emph{complexity} of $\mf{B}$ is $(d_1,d_2,d_3)$. We say that $\mf{B}$ has \emph{rank} at least $r$ if $r_1,\ldots,r_{d_1}$ are linearly independent and all nontrivial linear combinations
\[\sum_{i=1}^{d_2}a_iM_i+\sum_{j=1}^{d_3}b_jN_j\]
have $\mb{F}_p$-rank at least $r$. (This is equivalent to the same condition on $M_1,\ldots,M_{d_2}$ and $N_1,\ldots,N_{d_3}$ separately up to an absolute multiplicative constant in the rank.)

A $k$-symmetrized quadratic factor $\mf{B}$ defines maps $\msf{B}_{1,i}\colon G^k\to\mb{F}_p^k$, $\msf{B}_{2,i}\colon G^k\to\mc{S}_k$, and $\msf{B}_{3,i}\colon G^k\to\mc{S}_k'$ given by
\[\msf{B}_{1,i}(X) = Xr_i,\quad\msf{B}_{2,i}(X) = XM_iX^\TT,\quad\msf{B}_{3,i}(X) = XN_iX^\TT.\]
We additionally define
\[\msf{B}_1(X) = (\msf{B}_{1,i}(X))_{i\in[d_1]},\quad\msf{B}_2(X) = (\msf{B}_{2,i}(X))_{i\in[d_2]},\quad\msf{B}_3(X) = (\msf{B}_{3,i}(X))_{i\in[d_3]},\]
\[\msf{B}(X) = (\msf{B}_1(X),\msf{B}_2(X),\msf{B}_3(X)).\]
\end{definition}

For a function $f\colon G^k\to\mb{C}$, we use the notation $\mb{E}[f|\mf{B}]$ to represent the condition expectation of $f$ with respect to $\mf{B}$, or equivalently the projection of $f$ onto $\mf{B}$. Here we abuse notation and use $\mf{B}$ to denote the $\sigma$-algebra generated by the fibers of $\msf{B}$ in $G^k$. Explicitly, $\mb{E}[f|\mf{B}]\colon G^k\to\mb{C}$ is defined by $\mb{E}[f|\mf{B}](X)=\mb{E}_{Y\in\sB^{-1}(\sB(X))}[f(Y)]$.

Finally, we say that a factor $\mf{B}'$ \emph{refines} a factor $\mf{B}$ if the $\sigma$-algebra corresponding to $\mf{B}'$ refines the $\sigma$-algebra corresponding to $\mf{B}$.

The main result of this section is the following arithmetic regularity statement which guarantees that the desired factor is $k$-symmetrized.

\begin{theorem}[Arithmetic regularity lemma]\label{arith-reg-lem}
Fix $k\ge 1$. Let $\delta > 0$ and let $\omega_1,\omega_2\colon \mb{R}^+\to\mb{R}^+$ be arbitrary growth functions (which may depend on $\delta$). Let $G = \mb{F}_p^n$, let $f\colon G^k\to[0,1]$ be a function, and let $(\mf{B}_1^{(0)},\mf{B}_2^{(0)},\mf{B}_3^{(0)})$ be a $k$-symmetrized quadratic factor of complexity $(d_1^{(0)},d_2^{(0)},d_3^{(0)})$. Then there is a refinement $(\mf{B}_1,\mf{B}_2,\mf{B}_3)$ of complexity $(d_1,d_2,d_3)$ and a decomposition $f=f_{\on{str}}+f_{\on{sml}}+f_{\on{psr}}$ such that:
\begin{enumerate}[\quad 1.]
    \item $f_{\on{str}}=\mb{E}[f|\mf{B}]$;
    \item $\|f_{\on{sml}}\|_2\le\delta$;
    \item $\|f_{\on{psr}}\|_{U^3(G^k)}\le1/\omega_2(d_1+d_2+d_3)$;
    \item $f_{\on{str}}$ and $f_{\on{str}}+f_{\on{sml}}$ take values in $[0,1]$ and $f_{\on{psr}}, f_{\on{sml}}$ take values in $[-1,1]$;
    \item the complexity of $\mf{B}$ is $(d_1,d_2,d_3)$ where
    \[d_1,d_2,d_3\le C(k,\delta,\omega_1,\omega_2,d^{(0)}_1,d^{(0)}_2,d^{(0)}_3)\] for a fixed function $C$;
    \item the rank of $\mf{B}$ is at least $\omega_1(d_1+d_2+d_3)$.
\end{enumerate}
\end{theorem}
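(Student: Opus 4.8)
The plan is to follow the now-standard energy-increment strategy for arithmetic regularity, adapted to the $k$-symmetrized setting, closely tracking the argument in \cite[Section 3]{Gre06} but keeping the factor inside the class of $k$-symmetrized quadratic factors throughout. The key input is the $U^3$-inverse theorem over $\mb{F}_p^{nk}$: if a bounded function correlates nontrivially with $U^3$, then it correlates with a quadratic phase $e_p(q(\cdot))$ for a quadratic form $q$ on $\mb{F}_p^{nk}$ (plus a linear part). The crucial observation is that such a quadratic form on $G^k$, once we expand $\vec x=(x_1,\dots,x_k)$ with $x_i\in\mb{F}_p^n$, decomposes into (i) ``diagonal'' pieces $x_i^\TT M x_i$, which after symmetrizing $M\mapsto (M+M^\TT)/2$ (recall $p$ is odd, so $2$ is invertible) are captured by maps of the form $X\mapsto XMX^\TT$ with $M$ symmetric, i.e. exactly $\msf{B}_{2,\bullet}$; (ii) ``cross'' pieces $x_i^\TT P x_j$ for $i\neq j$, which by splitting $P$ into its symmetric and skew-symmetric parts are captured by the $(i,j)$-entries of $XMX^\TT$ ($M$ symmetric) and $XNX^\TT$ ($N$ skew-symmetric), again exactly $\msf{B}_{2,\bullet}$ and $\msf{B}_{3,\bullet}$; and (iii) linear pieces $r^\TT x_i$, captured by $\msf{B}_{1,\bullet}$. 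Thus every quadratic phase obstruction lives in some $k$-symmetrized quadratic factor, and adding it to $\mf{B}$ keeps us in the right class.

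The steps I would carry out, in order. First, set up the energy function $\mathcal{E}(\mf{B}) = \|\mb{E}[f|\mf{B}]\|_2^2 \in [0,1]$ and note that any refinement can only increase it. Second, run the basic iteration: starting from $\mf{B}^{(0)}$, at each stage write $f = f_{\on{str}} + f_{\on{sml}} + f_{\on{psr}}$ where $f_{\on{str}} = \mb{E}[f|\mf{B}]$, $f_{\on{sml}}$ accounts for the error from truncating the factor (its $\|\cdot\|_2$ controlled by choosing the rank large enough that $\mb{E}[f|\mf{B}]$ agrees with the ``ideal'' projection up to an $L^2$-error $\le\delta$, via an equidistribution/Weyl-type estimate for high-rank quadratic factors — cf.\ the need for clause 6), and check whether $\|f_{\on{psr}}\|_{U^3(G^k)} \le 1/\omega_2(d_1+d_2+d_3)$. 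If yes, we stop. If no, apply the $U^3$-inverse theorem to $f_{\on{psr}}$ to extract a quadratic phase; by the decomposition above, this phase is measurable with respect to a $k$-symmetrized quadratic factor of bounded extra complexity; adjoin its defining data (the relevant column vectors, symmetric matrices, and skew-symmetric matrices) to $\mf{B}$. Third, a standard argument shows this correlation forces an energy increment $\mathcal{E} \mapsto \mathcal{E} + c(d_1,d_2,d_3)$ for some explicit positive $c$; since $\mathcal{E} \le 1$ this iteration terminates after a bounded number of steps, giving the complexity bound (clause 5), where the bound $C$ absorbs the dependence on $\omega_1,\omega_2,\delta,k$ and the initial complexity. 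Fourth, a final rank-regularization step: refine $\mf{B}$ once more (at bounded cost) so that its rank is at least $\omega_1(d_1+d_2+d_3)$ — this is where one uses that any $k$-symmetrized quadratic factor can be refined to one of arbitrarily high rank with controlled complexity blow-up, by the usual linear-algebra fact that a low-rank linear combination of the defining matrices can be ``split off'' into a small number of rank-one (hence linear, after a change tracked through the factor) pieces. The boundedness properties in clause 4 follow from $f$ taking values in $[0,1]$ (so does its conditional expectation) and defining $f_{\on{psr}} = f - f_{\on{str}} - f_{\on{sml}}$ at the end.

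The main obstacle I expect is the bookkeeping in the $k$-symmetrized reduction: ensuring that the quadratic phase produced by the off-the-shelf $U^3$-inverse theorem on $\mb{F}_p^{nk}$ really does get packaged into the three lists $(\mf{B}_1,\mf{B}_2,\mf{B}_3)$ of the right shapes — column vectors in $\mb{F}_p^n$, symmetric $n\times n$ matrices, skew-symmetric $n\times n$ matrices — rather than into general $nk\times nk$ data. Concretely one must verify that the block structure of a symmetric $nk\times nk$ matrix, read through the identification $\mb{F}_p^{nk}\cong(\mb{F}_p^n)^k$, is exactly a symmetric $k\times k$ array of $n\times n$ blocks with the diagonal blocks symmetric, and that this is precisely the data $(XM_iX^\TT$ with $M_i$ symmetric$)$ together with $(XN_jX^\TT$ with $N_j$ skew-symmetric$)$ — here the skew-symmetric list is genuinely needed because an off-diagonal block need not be symmetric, while $XNX^\TT$ with $N$ skew-symmetric produces precisely a skew-symmetric $k\times k$ output encoding the antisymmetric part of the cross terms. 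A secondary technical point is the simultaneous high-rank regularization (clause 6) coexisting with the energy-increment termination: one must interleave rank-refinement and energy-increment steps carefully, or else perform all energy increments first and a single rank-refinement at the end, checking the latter does not reintroduce a $U^3$ obstruction — which it does not, since refining only increases energy and the stopping condition was already met, though one may need to slightly adjust $f_{\on{sml}}$. I would handle this by choosing $\omega_1$ growth to dominate whatever rank is needed for the $L^2$-closeness of conditional expectations, exactly as in \cite{Gre06}.
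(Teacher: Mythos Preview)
Your handling of the $k$-symmetrized bookkeeping is essentially correct and matches the paper: the quadratic phase coming from the $U^3$-inverse theorem on $\mb{F}_p^{nk}$ decomposes block-wise into linear pieces, symmetric-matrix pieces, and skew-symmetric-matrix pieces, exactly as you describe, and this is the content of the paper's \cref{thm:energy-increment}. The rank-regularization step is likewise the paper's \cref{thm:making-factors-high-rank}.

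The genuine gap is in your treatment of $f_{\on{sml}}$ and the termination of the iteration. You describe $f_{\on{sml}}$ as ``accounting for the error from truncating the factor'' with $\|f_{\on{sml}}\|_2$ controlled by rank; this is not what $f_{\on{sml}}$ is. You then run a \emph{single} energy-increment loop with the moving stopping threshold $\|f_{\on{psr}}\|_{U^3}\le 1/\omega_2(d_1+d_2+d_3)$, claiming the energy increment $c(d_1,d_2,d_3)$ at each step forces termination. But $c$ depends on the threshold $1/\omega_2(\text{current complexity})$ through the inverse theorem, and since the complexity grows by $O(k^2)$ at each step while $\omega_2$ is an arbitrary growth function, the increments $c(d_1^{(m)},d_2^{(m)},d_3^{(m)})$ can shrink so fast that their sum never reaches $1$: the loop need not terminate.

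The paper resolves this with a \emph{double} iteration. First prove a weak regularity lemma (\cref{thm:weak-regularity}): for a \emph{fixed} threshold $\delta'$, iterate energy-increment plus rank-regularization to get $f=\mb{E}[f|\mf{B}']+f_{\on{psr}}$ with $\|f_{\on{psr}}\|_{U^3}\le\delta'$; here the increment is a fixed $\epsilon(\delta')^2$ so termination is immediate. Then run an outer loop: at stage $m$ apply weak regularity to $\mf{B}^{(m)}$ with threshold $1/\omega_2(d_1^{(m)}+d_2^{(m)}+d_3^{(m)})$ to obtain $\mf{B}^{(m+1)}$, and stop when $\|\mb{E}[f|\mf{B}^{(m)}]-\mb{E}[f|\mf{B}^{(m-1)}]\|_2<\delta$. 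By Pythagoras this outer loop halts within $\delta^{-2}$ steps regardless of $\omega_2$. At termination one sets
\[
f_{\on{str}}=\mb{E}[f|\mf{B}^{(M-1)}],\qquad f_{\on{sml}}=\mb{E}[f|\mf{B}^{(M)}]-\mb{E}[f|\mf{B}^{(M-1)}],\qquad f_{\on{psr}}=f-\mb{E}[f|\mf{B}^{(M)}].
\]
So $f_{\on{sml}}$ is the difference of two nested conditional expectations, its $L^2$-smallness is the \emph{stopping condition} of the outer loop, and the $U^3$-bound on $f_{\on{psr}}$ is in terms of the complexity of the \emph{coarser} factor $\mf{B}^{(M-1)}$ by design of the inner call. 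This two-scale structure is what you are missing; your ``final rank-regularization then slightly adjust $f_{\on{sml}}$'' does not substitute for it.
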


The proof closely follows the proof of arithmetic regularity given in \cite{Gre05}; the only additional ingredient is guaranteeing at each stage that the factor introduced is $k$-symmetrized.

\begin{lemma}\label{thm:energy-increment}
Fix $k\ge 1$. Let $\delta>0$. There exists $\epsilon>0$ such that the following holds. Let $\mf{B}^{(0)}=(\mf{B}^{(0)}_1,\mf{B}^{(0)}_2,\mf{B}^{(0)}_3)$ be a $k$-symmetrized quadratic factor with complexity $(d_1,d_2,d_3)$ and let $f\colon G^k\to[-1,1]$ be a function such that \[\|f-\mb{E}[f|\mf{B}^{(0)}]\|_{U^3(G^k)}\ge\delta.\] Then there exists a refinement $\mf{B}=(\mf{B}_1,\mf{B}_2,\mf{B}_3)$ with complexity at most $(d_1+k,d_2+\binom{k+1}2,d_3+\binom{k}2)$ such that \[\|\mb{E}[f|\mf{B}]\|_2^2\ge\|\mb{E}[f|\mf{B}^{(0)}]\|_2^2+\epsilon^2.\]
\end{lemma}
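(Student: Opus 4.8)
The plan is to run the standard energy-increment step that underlies the $U^3$-arithmetic regularity lemma over $\mb{F}_p^n$, following Green \cite{Gre06}; the one genuinely new point is that the quadratic phase produced by the inverse theorem can be repackaged into a $k$-symmetrized quadratic factor using only $O_k(1)$ generators, and that this repackaging is exactly where $p$ odd gets used.

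First I would set $g := f - \mb{E}[f|\mf{B}^{(0)}]$, so that $g$ is orthogonal to every $\mf{B}^{(0)}$-measurable function, takes values in $[-2,2]$, and satisfies $\snorm{g}_{U^3(G^k)}\ge\delta$. Identifying $G^k\cong\mb{F}_p^{nk}$ and applying the inverse theorem for the $U^3$-norm (for odd $p$ the obstructions are classical quadratic phases; see \cite{Gre06}), there is a polynomial $\psi\colon G^k\to\mb{F}_p$ of degree at most $2$ such that
\[\Big|\mb{E}_{X\in G^k}g(X)\,e_p(\psi(X))\Big|\ge c(\delta)\]
for some $c(\delta)>0$, where $e_p\colon\mb{F}_p\to\mb{C}$, $e_p(t)=e^{2\pi it/p}$, is the standard additive character. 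Discarding the constant term, write $\psi=q+\ell$ with $q$ quadratic and $\ell$ linear in the $nk$ entries of $X$.

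The main step is to exhibit a refinement $\mf{B}$ of $\mf{B}^{(0)}$, obtained by appending $k$ column vectors to $\mf{B}^{(0)}_1$, $\binom{k+1}2$ symmetric matrices to $\mf{B}^{(0)}_2$, and $\binom k2$ skew-symmetric matrices to $\mf{B}^{(0)}_3$, with respect to which $\phi:=e_p(\psi)$ is measurable. Write $X\in G^k$ with rows $x_1^\TT,\dots,x_k^\TT$, $x_a\in\mb{F}_p^n$. The linear form is $\ell(X)=\sum_{a=1}^k x_a^\TT\ell_a$ for some $\ell_1,\dots,\ell_k\in\mb{F}_p^n$, so appending $\ell_1,\dots,\ell_k$ to $\mf{B}^{(0)}_1$ exposes each coordinate $x_b^\TT\ell_a$ of $X\ell_a$ and hence makes $\ell$ measurable. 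For the quadratic part I would first use $x_a^\TT Q x_a=x_a^\TT\tfrac{Q+Q^\TT}{2}x_a$ on the diagonal blocks and split each off-diagonal block into its symmetric and skew-symmetric parts (both steps requiring $p$ odd), obtaining
\[q(X)=\sum_{a=1}^k x_a^\TT M_{aa}x_a + \sum_{1\le a<b\le k}x_a^\TT M_{ab}x_b + \sum_{1\le a<b\le k}x_a^\TT N_{ab}x_b\]
with each $M_{ab}$ symmetric and each $N_{ab}$ skew-symmetric. Since $(XM_{ab}X^\TT)_{ab}=x_a^\TT M_{ab}x_b$ and $(XN_{ab}X^\TT)_{ab}=x_a^\TT N_{ab}x_b$, appending $\{M_{ab}:a\le b\}$ to $\mf{B}^{(0)}_2$ and $\{N_{ab}:a<b\}$ to $\mf{B}^{(0)}_3$ makes $q$ (a fixed $\mb{F}_p$-linear combination of entries of $\msf{B}_2$ and $\msf{B}_3$) measurable. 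The resulting $\mf{B}$ is a $k$-symmetrized quadratic factor refining $\mf{B}^{(0)}$ whose complexity matches the stated bound, and $\phi$ is $\mf{B}$-measurable with $|\phi|\equiv1$.

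Finally I would carry out the usual bookkeeping: since $\mf{B}$ refines $\mf{B}^{(0)}$ we get $\mb{E}[f|\mf{B}]=\mb{E}[g|\mf{B}]+\mb{E}[f|\mf{B}^{(0)}]$, the two summands are orthogonal because $g\perp\mf{B}^{(0)}$, so by the Pythagorean theorem $\snorm{\mb{E}[f|\mf{B}]}_2^2=\snorm{\mb{E}[f|\mf{B}^{(0)}]}_2^2+\snorm{\mb{E}[g|\mf{B}]}_2^2$; and since $\phi$ is $\mf{B}$-measurable, $c(\delta)\le|\mb{E}_X g(X)\ol{\phi(X)}|=|\mb{E}_X\mb{E}[g|\mf{B}](X)\ol{\phi(X)}|\le\snorm{\mb{E}[g|\mf{B}]}_2$ by Cauchy--Schwarz, so the lemma holds with $\epsilon:=c(\delta)$. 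I expect the only real obstacle to be the repackaging step: verifying that an arbitrary degree-$2$ polynomial on $\mb{F}_p^{nk}$ can be absorbed into $O_k(1)$ symmetrized generators, which is precisely where oddness of $p$ enters; everything else transfers essentially verbatim from \cite{Gre06}.
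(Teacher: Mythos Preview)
Your proposal is correct and follows essentially the same approach as the paper's proof: apply the $U^3$-inverse theorem over $\mb{F}_p^{nk}$ to the residual $g=f-\mb{E}[f|\mf{B}^{(0)}]$, then decompose the resulting quadratic phase into $k$ linear forms, $\binom{k+1}{2}$ symmetric $n\times n$ matrices, and $\binom{k}{2}$ skew-symmetric $n\times n$ matrices (using $p$ odd for the symmetric/skew-symmetric split), and finish with the Pythagorean theorem plus Cauchy--Schwarz. The paper's argument is identical in structure and in the complexity bookkeeping.
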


\begin{proof}
By the inverse theorem for the Gowers $U^3$-norm applied to $G^k\cong\mb{F}_p^{nk}$ (see \cite{Gre06,GT08}), there exist $\epsilon>0$ (only depending on $\delta$), a vector $r\in\mb{F}_p^{nk}$, and a symmetric matrix $M\in\mb{F}_p^{nk\times nk}$ such that \[\left|\mb{E}_{x\in \mb{F}_p^{nk}}\left(f(x)-\mb{E}[f|\mf{B}^{(0)}](x)\right)e_p(r^\TT x+x^\TT M x)\right|\ge\epsilon.\]

Say $r=(r_1,\ldots, r_k)\in\mb{F}_p^{nk}$ where $r_1,\ldots,r_k\in\mb{F}_p^n$ and $M=(M_{ij})_{i,j\in[k]}\in\mb{F}_p^{nk\times nk}$ where $M_{ij}\in \mb{F}_p^{n\times n}$. Note that the matrices $M_{ii}$ are symmetric, while $M_{ij}=M_{ji}^\TT$. For $i<j$, write $M_{ij}=M_{ij}'+M_{ij}''$ where $M_{ij}'$ is symmetric and $M_{ij}''$ is skew-symmetric. (Here we use that $p>2$.)

We define the factor $\mf{B}$ by appending the vectors $r_1,\ldots, r_k$ to the list $\mf{B}^{(0)}_1$, appending the symmetric matrices $(M_{ii})_{i\in [k]}$ and $(M_{ij}')_{i<j}$ to the list $\mf{B}^{(0)}_2$, and appending the skew-symmetric matrices $M_{ij})_{i<j}$ to the list $\mf{B}^{(0)}_3$. To conclude, all that remains to show is that \[\|\mb{E}[f|\mf{B}]\|_2^2\ge\|\mb{E}[f|\mf{B}^{(0)}]\|_2^2+\epsilon^2.\]

Define $g\colon \left(\mb{F}_p^n\right)^k\to\mb{C}$ by $g(x)=e_p(r^\TT x+x^\TT M x)$. Note that $\|g\|_2=1$ and crucially that $g$ is $\mf{B}$-measurable by the simple equality \[g(x_1,\ldots,x_k)=e_p\left(\sum_{i=1}^k r_i^\TT x_i+\sum_{i=1}^k x_i^\TT M_{ii}x_i+2\sum_{i<j}x_i^\TT M_{ij}'x_j+2\sum_{i<j}x_i^\TT M_{ij}''x_j\right).\]

Now the desired inequality follows from the Pythagorean theorem and Cauchy--Schwarz inequality since
\begin{align*}
\|\mb{E}[f|\mf{B}]\|_2^2 - \|\mb{E}[f|\mf{B}^{(0)}]\|_2^2
&=\|\mb{E}[f|\mf{B}]-\mb{E}[f|\mf{B}^{(0)}]\|^2\\
&\ge \left|\left\langle\mb{E}[f|\mf{B}]-\mb{E}[f|\mf{B}^{(0)}], g\right\rangle\right|^2\\
&=\left|\left\langle f-\mb{E}[f|\mf{B}^{(0)}], \mb{E}[g|\mf{B}]\right\rangle\right|^2\\
&\ge\epsilon^2.\qedhere
\end{align*}
\end{proof}

\begin{lemma}\label{thm:making-factors-high-rank}
Fix $k\ge 1$. Let $\omega\colon\mb{R}^+\to\mb{R}^+$ be an arbitrary growth function. There exists a growth function $\tau\colon\mb{R}^+\to\mb{R}^+$ such that the following holds. Let $\mf{B}=(\mf{B}_1,\mf{B}_2,\mf{B}_3)$ be a $k$-symmetrized quadratic factor with complexity $(d_1,d_2,d_3)$. There exists a refinement $\mf{B}'=(\mf{B}_1,\mf{B}_2,\mf{B}_3)$ with complexity $(d'_1,d'_2,d'_3)$ that satisfies the following:
\begin{enumerate}[\quad 1.]
\item the rank of $\mf{B}'$ is at least $\omega(d_1'+d_2'+d_3')$;
\item $d_2'\le d_2$ and $d_3'\le d_3$ and $d_1'\le\tau(d_1+d_2+d_3)$.
\end{enumerate}
\end{lemma}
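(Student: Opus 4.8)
The plan is to run the standard ``rank reduction'' iteration, but executed carefully so that we never add any symmetric or skew-symmetric matrices to the factor -- only column vectors -- which is what forces $d_2' \le d_2$ and $d_3' \le d_3$. The key structural observation is the parenthetical remark in the definition of rank: the condition ``$\mathfrak{B}$ has rank at least $r$'' is, up to an absolute constant, equivalent to the conjunction of ``$r_1,\dots,r_{d_1}$ are linearly independent'' and ``every nontrivial $\mathbb{F}_p$-linear combination of $M_1,\dots,M_{d_2},N_1,\dots,N_{d_3}$ has rank at least $r$.'' So the matrix part of the rank condition depends only on $\mathfrak{B}_2, \mathfrak{B}_3$ and is unaffected by adding vectors to $\mathfrak{B}_1$. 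Therefore the only obstruction to high rank that we need to fix by refining is (a) linear dependence among the $r_i$, and (b) the possibility that some linear combination of the $M_i, N_j$ has low rank -- and the standard fix for (b) is to observe that if $Q = \sum a_i M_i + \sum b_j N_j$ has rank $\rho < r$, then the quadratic form $X \mapsto XQX^\TT$ is measurable with respect to the $\sigma$-algebra generated by a bounded number (depending on $\rho$, hence on $r$) of \emph{linear} forms $X \mapsto Xv$ coming from a basis of the row space / column space of $Q$. Adding those finitely many vectors $v$ to $\mathfrak{B}_1$ lets us delete $M_{i_0}$ (or $N_{j_0}$) from the list while still refining the factor.

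Concretely, I would set up the iteration as follows. Given the target growth function $\omega$, we construct $\mathfrak{B}'$ by repeatedly doing the following: if the current factor, with complexity $(e_1,e_2,e_3)$, already has rank at least $\omega(e_1+e_2+e_3)$, stop. Otherwise, there is a low-rank witness: either the current vector list is linearly dependent (in which case discard a redundant vector, decreasing $d_1$), or there is a nontrivial combination $Q$ of the current matrices with rank less than $\omega(e_1+e_2+e_3)$. In the latter case, pick one matrix with nonzero coefficient in that combination, say it is in $\mathfrak{B}_2$; express it via the other matrices in the combination and $Q$; since $Q$ has rank $\rho \le \omega(e_1+e_2+e_3) - 1 \le \omega(\text{current total})$ and $X \mapsto XQX^\TT$ is determined by $\le 2\rho$ linear functionals of $X$, append those $\le 2\rho$ vectors to $\mathfrak{B}_1$ and remove that matrix from $\mathfrak{B}_2$. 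This strictly decreases $d_2 + d_3$, so the number of matrix-removal steps is at most $d_2 + d_3$. Between consecutive matrix-removal steps we may also discard linearly dependent vectors, but we never discard more than we have, so throughout the process $d_2' \le d_2$ and $d_3' \le d_3$ as required.

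The one genuine bookkeeping obstacle -- and the step I expect to require the most care -- is bounding $d_1'$ by a function $\tau(d_1 + d_2 + d_3)$ of the \emph{initial} complexity only. Each matrix-removal step appends $O(\rho)$ vectors where $\rho \le \omega(\text{current total complexity})$, and the current total complexity can a priori grow because we are adding vectors. So one must check that the growth is controlled: there are at most $d_2 + d_3$ matrix-removal steps, and between them the total complexity can only increase by the $O(\rho)$ vectors just added (vector-discarding steps only decrease complexity). Thus after $j$ matrix-removal steps the total complexity is at most some $T_j$ with $T_0 = d_1 + d_2 + d_3$ and $T_{j+1} \le T_j + C\,\omega(T_j)$ for an absolute constant $C$; iterating $d_2 + d_3 \le d_1 + d_2 + d_3$ times gives a bound $T_{d_2+d_3} \le \tau_0(d_1+d_2+d_3)$ depending only on $\omega$ and the initial complexity. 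Setting $\tau(m)$ to be this $\tau_0(m)$ (which dominates the final $d_1'$) completes the proof; note $\tau$ depends only on $\omega$, as claimed. Finally one records that the resulting $\mathfrak{B}'$ genuinely refines $\mathfrak{B}$: discarding linearly dependent vectors does not change the $\sigma$-algebra, and each matrix we removed had its quadratic form re-expressed in terms of vectors we kept, so no measurability is lost.
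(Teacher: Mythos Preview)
Your proposal is correct and is essentially the same argument as the paper's: iteratively, if some nontrivial combination $Q$ of the matrices has rank below the target, write $Q=\sum s_\ell t_\ell^\TT$, append the $s_\ell,t_\ell$ to $\mf{B}_1$, and delete one matrix with nonzero coefficient (and prune dependent vectors from $\mf{B}_1$); since $d_2+d_3$ drops at each matrix step, the process terminates in at most $d_2+d_3$ such steps. Your explicit recursion $T_{j+1}\le T_j+C\,\omega(T_j)$ is exactly the content of the paper's sentence ``one can easily see that $d_1^{(M)}$ is bounded by some function of $d_1,d_2,d_3$ and $\omega$,'' made quantitative.
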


\begin{proof}
Consider a $k$-symmetrized quadratic factor $\mf{B}=(\mf{B}_1,\mf{B}_2,\mf{B}_3)$ defined by $\mf{B}_1=(r_1,\ldots,r_{d_1})$ and $\mf{B}_2=(M_1,\ldots,M_{d_2})$ and $\mf{B}_3=(N_1,\ldots,N_{d_3})$. (Recall that the $r_i$ are vectors of length $n$ while the $M_i$ are symmetric $n\times n$ matrices and the $N_i$ are skew-symmetric $n\times n$ matrices.)

If the rank of $\mf{B}$ is less than $r$, then either the $r_1,\ldots, r_{d_1}$ are linearly dependent or there exists a non-trivial linear combination \[\sum_{i=1}^{d_2}a_iM_i+\sum_{j=1}^{d_3}b_jN_j\]
that has $\mb{F}_p$-rank less than $r$.

We do the following. First if there is some linear combination with rank less than $r$, then choose vectors $s_1,\ldots, s_{r-1},t_1,\ldots,t_{r-1}$ such that \[\sum_{i=1}^{r-1} s_i t_i^\TT = \sum_{i=1}^{d_2}a_iM_i+\sum_{j=1}^{d_3}b_jN_j.\] Add $s_1,\ldots, s_{r-1},t_1,\ldots,t_{r-1}$ to $\mf{B}_1$ and remove the first $M_i$ or $N_j$ with nonzero coefficient (i.e., if $a_1=\cdots=a_{i-1}=0$ while $a_i\neq 0$, then remove $M_i$ from $\mf{B}_2$; if $a_1=\cdots=a_{d_2}=b_1=\cdots=b_{j-1}=0$ and $b_j\neq 0$, then remove $N_j$ from $\mf{B}_3$). Then remove any element of the modified $\mf{B}_1$ that is linearly dependent on the previous vectors in $\mf{B}_1$. Note that the factor produced refines the original factor.

We iterate the above process, producing a sequence of $k$-symmetrized quadratic factors $\mf{B}=\mf{B}^{(0)},\mf{B}^{(1)},\ldots,\mf{B}^{(M)}$ as follows. Suppose that $\mf{B}^{(m)}$ has complexity $(d_1^{(m)},d_2^{(m)},d_3^{(m)})$. If $\mf{B}^{(m)}$ has rank at least $\omega(d_1^{(m)}+d_2^{(m)}+d_3^{(m)})$, then halt and set $m=M$. Otherwise refine $\mf{B}^{(m)}$ to $\mf{B}^{(m+1)}$ as described above. Note that $M\le d_2+d_3+1$ since every step (except possibly the first) reduces $d_2^{(m)}+d_3^{(m)}$ by 1. Furthermore, one can easily see that $d_1^{(M)}$ is bounded by some function of $d_1,d_2,d_3$ and $\omega$, as desired.
\end{proof}

\begin{lemma}\label{thm:weak-regularity}
Fix $k\ge 1$. Let $\delta>0$ and let $\omega\colon\mb{R}^+\to\mb{R}^+$ be an arbitrary growth function. Let $\mf{B}^{(0)}=(\mf{B}^{(0)}_1,\mf{B}^{(0)}_2,\mf{B}^{(0)}_3)$ be a $k$-symmetrized quadratic factor with complexity $(d^{(0)}_1,d^{(0)}_2,d^{(0)}_3)$ and let $f\colon G^k\to[0,1]$ be a function. Then there exists a refinement $\mf{B}=(\mf{B}_1,\mf{B}_2,\mf{B}_3)$ and a decomposition $f=f_{\on{str}}+f_{\on{psr}}$ such that:
\begin{enumerate}[\quad 1.]
    \item $f_{\on{str}}=\mb{E}[f|\mf{B}]$;
    \item $\|f_{\on{psr}}\|_{U^3(G^k)}\le\delta$;
    \item $f_{\on{str}}$ takes values in $[0,1]$ and $f_{\on{psr}}$ takes values in $[-1,1]$;
    \item the complexity of $\mf{B}$ is $(d_1,d_2,d_3)$ where
    \[d_1,d_2,d_3\le C(k,\delta,d^{(0)}_1,d^{(0)}_2,d^{(0)}_3)\] for a fixed function $C$;
    \item the rank of $\fB$ is at least $\omega(d_1+d_2+d_3)$.
\end{enumerate}
\end{lemma}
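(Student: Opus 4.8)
The plan is to run the standard energy-increment iteration, interleaving the energy-increment step of \cref{thm:energy-increment} with the rank-boosting step of \cref{thm:making-factors-high-rank}, and to use the fact that the energy $\snorm{\mb{E}[f|\mf{B}]}_2^2$ lies in $[0,1]$ (since $f$ maps into $[0,1]$, so does $\mb{E}[f|\mf{B}]$) to bound the number of iterations.

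First I would let $\epsilon=\epsilon(\delta)>0$ be the constant produced by \cref{thm:energy-increment} and let $\tau$ be the growth function produced by \cref{thm:making-factors-high-rank} when applied with the given growth function $\omega$. I then construct a sequence $\mf{B}^{(0)},\mf{B}^{(1)},\dots$ of $k$-symmetrized quadratic factors, each refining the previous, as follows. Given $\mf{B}^{(m)}$, first apply \cref{thm:making-factors-high-rank} (with growth function $\omega$) to obtain a refinement $\wt{\mf{B}}^{(m)}$ whose rank is at least $\omega$ of its own complexity sum. Then test whether $\snorm{f-\mb{E}[f|\wt{\mf{B}}^{(m)}]}_{U^3(G^k)}\le\delta$. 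If this holds, halt and output $\mf{B}=\wt{\mf{B}}^{(m)}$ together with $f_{\on{str}}=\mb{E}[f|\mf{B}]$ and $f_{\on{psr}}=f-f_{\on{str}}$. Otherwise, apply \cref{thm:energy-increment} to $f$ and $\wt{\mf{B}}^{(m)}$ to obtain a refinement $\mf{B}^{(m+1)}$ with $\snorm{\mb{E}[f|\mf{B}^{(m+1)}]}_2^2\ge\snorm{\mb{E}[f|\wt{\mf{B}}^{(m)}]}_2^2+\epsilon^2$.

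To see that this terminates and yields the desired bounds: passing to a refinement never decreases $\snorm{\mb{E}[f|\cdot]}_2^2$, so the rank-boost step does not lose energy, and hence $\snorm{\mb{E}[f|\mf{B}^{(m+1)}]}_2^2\ge\snorm{\mb{E}[f|\mf{B}^{(m)}]}_2^2+\epsilon^2$; since this quantity stays in $[0,1]$, the iteration halts after at most $\epsilon^{-2}$ steps. For complexity: each energy-increment step increases the complexity triple by at most $(k,\binom{k+1}{2},\binom{k}{2})$ (by \cref{thm:energy-increment}), while each rank-boost step leaves $d_2,d_3$ non-increasing and replaces $d_1$ by at most $\tau(d_1+d_2+d_3)$ (by \cref{thm:making-factors-high-rank}); iterating these at most $\epsilon^{-2}$ times bounds the final complexity by a function of $k$, $\delta$, $\omega$, and $(d_1^{(0)},d_2^{(0)},d_3^{(0)})$. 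The output then satisfies property~1 by construction, property~2 by the halting test, property~3 because $f_{\on{str}}$ takes values in $[0,1]$ so $f_{\on{psr}}=f-f_{\on{str}}$ takes values in $[-1,1]$, property~4 by the above, and property~5 because the final rank-boost was performed with growth function $\omega$. Every factor in the sequence is $k$-symmetrized: $\mf{B}^{(0)}$ is, and both \cref{thm:energy-increment,thm:making-factors-high-rank} preserve this property.

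The argument is routine, and there is no deep obstacle; the only point that requires care is the ordering of the two steps. One must rank-boost \emph{before} testing the $U^3$-norm, so that the halting factor is simultaneously of high rank and leaves a $U^3$-pseudorandom remainder, and one must note that rank-boosting is a passage to a refinement and therefore cannot undo the energy gained in earlier rounds — this monotonicity is exactly what makes the bound $\snorm{\mb{E}[f|\mf{B}^{(m)}]}_2^2\in[0,1]$ suffice to terminate the interleaved process.
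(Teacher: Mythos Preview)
Your proposal is correct and follows essentially the same approach as the paper: iterate \cref{thm:energy-increment} and \cref{thm:making-factors-high-rank} at most $\epsilon(\delta)^{-2}$ times, using the $[0,1]$-boundedness of the energy to terminate. The only cosmetic difference is the ordering --- the paper tests the $U^3$-norm first, then applies energy-increment followed by rank-boost, whereas you rank-boost first and then test --- but your ordering is arguably cleaner since it guarantees property~5 even when the process halts at $m=0$.
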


\begin{proof}
This follows immediately by iterating \cref{thm:energy-increment} and \cref{thm:making-factors-high-rank} at most $\epsilon(\delta)^{-2}$ times.

In particular, we construct a sequence of $k$-symmetrized quadratic factors $\mf{B}^{(0)},\mf{B}^{(1)},\ldots,\mf{B}^{(M)}$ each refining the last as follows. If $\|f-\mb{E}[f|\mf{B}^{(m)}]\|_{U^3(G^k)}<\delta$, halt the process and set $M=m$. Otherwise, let $\tilde{\mf{B}}^{(m+1)}$ be the factor produced by applying \cref{thm:energy-increment} to $\mf{B}^{(m)}$ and $f$ with parameter $\delta$. Then let $\fB^{(m+1)}$ be the factor produced by applying \cref{thm:making-factors-high-rank} to $\tilde{\fB}^{(m+1)}$ with parameter $\omega$. By definition, at every step of this process, the rank of $\fB^{(m)}$ is at least $\omega_1(d_1^{(m)}+d_2^{(m)}+d_3^{(m)})$ where $(d_1^{(m)},d_2^{(m)},d_3^{(m)})$ is the complexity of $\fB$.

Since \[\|\mb{E}[f|\mf{B}^{(m+1)}\|_2^2\geq\|\mb{E}[f|\tilde{\mf{B}}^{(m+1)}\|_2^2\geq\|\mb{E}[f|\mf{B}^{(m)}\|_2^2+\epsilon(\delta)^2\] and this quantity is bounded between 0 and 1, we conclude that the process must stop after $M\le\epsilon(\delta)^{-2}$ steps.

At the conclusion of this process, we have produced a $k$-symmetrized quadratic factor $\mf{B}^{(M)}$ that refines $\mf{B}^{(0)}$ such that $\|f-\mb{E}[f|\mf{B}^{(M)}]\|_{U^3(G^k)}<\delta$. Defining $f_{\on{str}}=\mb{E}[f|\mf{B}^{(M)}]$ and $f_{\on{psr}}=f-\mb{E}[f|\mf{B}^{(M)}]$ gives the desired result.
\end{proof}

\begin{proof}[Proof of \cref{arith-reg-lem}]
The desired result follows by iterating \cref{thm:weak-regularity} at most $\delta^{-2}$ times.

In particular, we construct a sequence of $k$-symmetrized quadratic factors $\mf{B}^{(0)},\mf{B}^{(1)},\ldots,\mf{B}^{(M)}$ each refining the last as follows. If $\|\E[f|\fB^{(m)}]-\mb{E}[f|\mf{B}^{(m-1)}]\|_2^2<\delta^2$, halt the process and set $M=m$. Otherwise, let $\mf{B}^{(m+1)}$ be the factor produced by applying \cref{thm:weak-regularity} to $\mf{B}^{(m)}$ and $f$ with parameter $1/\omega_2(d_1^{(m)}+d_2^{(m)}+d_3^{(m)})$ and growth function $\omega_1$.

Note that by the Pythagorean theorem, \[\|\E[f|\fB^{(m)}]-\mb{E}[f|\mf{B}^{(m-1)}]\|_2^2=\|\E[f|\fB^{(m)}]\|_2^2-\|\E[f|\fB^{(m-1)}]\|_2^2.\] Since these $L^2$-norms are bounded between 0 and 1, we see that the process must stop after $M\le\delta^{-2}$ steps.

At the conclusion of this process, we have produced a $k$-symmetrized quadratic factor $\fB^{(M-1)}$ that refines $\fB^{(0)}$ with complexity $(d_1^{(M-1)},d_2^{(M-1)},d_3^{(M-1)})$ and rank at least $\omega_1(d_1^{(M-1)}+d_2^{(M-1)}+d_3^{(M-1)})$. Defining $f_{\on{str}}=\E[f|\fB^{(M-1)}]$ and $f_{\on{sml}}=\E[f|\fB^{(M)}]-\E[f|\fB^{(M-1)}]$ and $f_{\on{psr}}=f-\E[f|\fB^{(M)}]$ gives the desired result.
\end{proof}

\section{Equidistribution and counting lemma}\label{sec:equidistribution}

The goal of this section is to study the counts of matrix patterns of the form $\{0,M_1,M_2,M_1+M_2\}$ in the ``structured term'' $f_{\on{str}}$. Recall that a $k$-symmetrized quadratic factor $\fB$ defines a map $\sB\colon G^k\to \left(\F_p^k\right)^{d_1}\times \mc{S}_k^{d_2}\times \mc{S}_k'^{d_3}$ where $\mc{S}_k$ and $\mc{S}_k'$ are the spaces of $k\times k$ symmetric (resp. skew-symmetric) matrices. We call the fibers of this map \emph{atoms} of $\fB$.

Understanding the counts of patterns in $f_{\on{str}}$ is equivalent to understanding how occurrences of these patterns are distributed among tuples of atoms. The first result of this section is simply that the atoms of $\fB$ are approximately the same size; in other words, as $X\in G^k$ varies, $\sB(X)$ is equidistributed in $\left(\F_p^k\right)^{d_1}\times \mc{S}_k^{d_2}\times \mc{S}_k'^{d_3}$.

The main result of this section describes how the 4-tuple $(\sB(X),\sB(X+M_1D),\sB(X+M_2D),\sB(X+(M_1+M_2)D))$ is distributed as $X,D\in G^k$ vary. This 4-tuple is not equidistributed across all possible 4-tuples of atoms, instead it is equidistributed on a certain linear subspace. We need a somewhat unfortunate amount of notation in this section to describe this linear subspace.

Note that this is also the place where the ``mysterious'' spectral condition that $M_1M_2^{-1}$ has no pair of eigenvalues that are negatives of each other appears. It turns out that the dimension of the space that the relevant 4-tuples are equidistributed over changes depending on whether or not this spectral condition is satisfied.

Finally in this section we restrict our attention to matrix patterns of the form $\{0,I,J,I+J\}$ where $I = I_{k\times k}$ is the identity, $J,I-J,I+J$ are invertible, and that $J$ satisfies the spectral condition (that no pair of eigenvalues of $J$ over $\ol{\mb{F}}_p$ are negatives of each other). By a change of variables, all cases can be reduced to this one.

\subsection{Equidistribution results}\label{sub:equidistribution}
We first quote the following result on the equidistribution in $\mb{F}_p^n$.
\begin{proposition}[{\cite[Lemma~4.2]{Gre06}}]\label{prop:equidistribution}
Define $\Gamma(x)=(r_1^\TT x,\ldots,r_{d_1}^\TT x)$ and $\Phi(x)=(x^\TT M_1x,\ldots,x^\TT M_{d_2}x)$ where the $M_i$ are symmetric. Furthermore suppose that $\{r_i\}_{i\in[d_1]}$ are linearly independent and for any nonzero vector $(\lambda_1,\ldots,\lambda_{d_2})$ in $\mb{F}_p^{d_2}$ we have $\on{rank}(\sum_{i=1}^{d_2}\lambda_iM_i)\ge r$. Then for any $a\in\mb{F}_p^{d_1}$ and $b\in\mb{F}_p^{d_2}$ we have
\[\mb{P}_{x\in\mb{F}_p^n}[\Gamma(x) = a, \Phi(x) = b] = p^{-d_1-d_2}+O(p^{-r/2}).\]
\end{proposition}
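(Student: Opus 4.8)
The plan is to expand the indicator functions defining the event into additive characters and thereby reduce the estimate to a bound on Gauss-type exponential sums. Applying $\ind[y=c]=p^{-1}\sum_{t\in\F_p}e_p(t(y-c))$ to each of the $d_1$ linear and $d_2$ quadratic constraints, one obtains
\[
\mb{P}_{x\in\F_p^n}[\Gamma(x)=a,\ \Phi(x)=b]=p^{-d_1-d_2}\sum_{s\in\F_p^{d_1}}\sum_{t\in\F_p^{d_2}}e_p(-s^\TT a-t^\TT b)\ \mb{E}_{x\in\F_p^n}e_p\big(\ell_s^\TT x+x^\TT Q_t x\big),
\]
where $\ell_s=\sum_{i=1}^{d_1}s_ir_i$ and $Q_t=\sum_{j=1}^{d_2}t_jM_j$. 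The term $(s,t)=(0,0)$ contributes precisely $p^{-d_1-d_2}$, which is the claimed main term, so it suffices to show $\big|\mb{E}_{x}e_p(\ell_s^\TT x+x^\TT Q_t x)\big|\le p^{-r/2}$ for every $(s,t)\ne(0,0)$; since there are at most $p^{d_1+d_2}$ such terms, their total contribution is then $O(p^{-r/2})$.

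For the terms with $t=0$ and $s\ne 0$, linear independence of $r_1,\dots,r_{d_1}$ forces $\ell_s\ne 0$, so $\mb{E}_x e_p(\ell_s^\TT x)=0$ and these terms vanish. For the terms with $t\ne 0$, the hypothesis gives $\rho:=\on{rank}(Q_t)\ge r$. Since $p$ is odd, the symmetric matrix $Q_t$ is diagonalizable by a congruence, so after an invertible (hence measure-preserving) linear change of coordinates I would assume $x^\TT Q_t x=\sum_{i=1}^{\rho}\lambda_ix_i^2$ with all $\lambda_i\ne 0$, while $\ell_s^\TT x=\sum_{i=1}^{n}\mu_ix_i$. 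The exponential sum then factors over the first $\rho$ coordinates and the last $n-\rho$: completing the square in each of the first $\rho$ coordinates (using $2\in\F_p^\times$) turns each into a one-variable quadratic Gauss sum of modulus $\sqrt p$, whereas the sum over the remaining $n-\rho$ coordinates equals $p^{n-\rho}$ if $\mu_{\rho+1}=\dots=\mu_n=0$ and $0$ otherwise. In every case $\big|\mb{E}_x e_p(\ell_s^\TT x+x^\TT Q_t x)\big|\le p^{-n}\cdot p^{\rho/2}\cdot p^{n-\rho}=p^{-\rho/2}\le p^{-r/2}$, as needed.

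The only ingredient beyond bookkeeping is the classical one-variable Gauss sum estimate together with the diagonalization of $Q_t$ over the odd-characteristic field $\F_p$; this is the crux of the argument, but it is entirely standard and I anticipate no genuine obstacle. Assembling the two cases above with the character expansion of the first paragraph then yields the claimed estimate $p^{-d_1-d_2}+O(p^{-r/2})$.
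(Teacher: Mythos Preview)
Your argument is correct and is essentially the standard character-sum proof of this equidistribution statement: expand via orthogonality, isolate the $(s,t)=(0,0)$ main term, kill the purely linear terms by independence of the $r_i$, and bound the remaining terms by diagonalizing $Q_t$ and invoking the one-variable Gauss sum estimate. The bookkeeping is right as well, since the $p^{-d_1-d_2}$ prefactor exactly cancels the $p^{d_1+d_2}$ nontrivial pairs $(s,t)$, leaving an error of at most $p^{-r/2}$.

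Note that the paper does not give its own proof of this proposition; it is quoted verbatim from Green's Montreal notes \cite[Lemma~4.2]{Gre06}, and your write-up is precisely the argument one finds there (modulo cosmetic choices such as whether to diagonalize or to square the sum and use the rank of $Q_t$ directly). So there is nothing to compare against in this paper, but your proof matches the cited source.
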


Note \cite{Gre06} only states the above for $\mb{F}_5^n$ but the proof in general is completely analogous. Given this we can immediately derive the necessary equidistribution result on factors for the specialized factors constructed in the previous section. 
\begin{proposition}\label{prop:factor-equidistribution}
Let $\mf{B}$ be a $k$-symmetrized quadratic factor with rank at least $r$. Then
\[\mb{P}_{X\in\mb{F}_p^{k\times n}}[\msf{B}(X) = ((v_i)_{i\in[d_1]},(U_i)_{i\in[d_2]},(V_i)_{i\in[d_3]})] = p^{-kd_1-\binom{k+1}{2}d_2-\binom{k}{2}d_3}+O(p^{-r/2})\]
for all $v_i\in\mb{F}_p^k$, $U_i\in \mc{S}_k$, and $V_i\in \mc{S}_k'$.
\end{proposition}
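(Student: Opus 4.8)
The strategy is to reduce \cref{prop:factor-equidistribution} to the scalar equidistribution statement \cref{prop:equidistribution} by unpacking the definitions of the maps $\msf{B}_{1,i}$, $\msf{B}_{2,i}$, $\msf{B}_{3,i}$ in terms of the entries of $X$, and recognizing the resulting constraints as a system of linear and quadratic equations in the variables $X \in \mb{F}_p^{k\times n} \cong \mb{F}_p^{nk}$. Writing $X$ with rows $x_1^\TT,\ldots,x_k^\TT$, the condition $\msf{B}_{1,i}(X) = v_i$ (i.e.\ $Xr_i = v_i$) becomes the $k$ scalar linear conditions $r_i^\TT x_a = (v_i)_a$ for $a \in [k]$; the condition $\msf{B}_{2,i}(X) = U_i$ (i.e.\ $XM_iX^\TT = U_i$, symmetric) becomes the $\binom{k+1}2$ scalar quadratic conditions $x_a^\TT M_i x_b = (U_i)_{ab}$ for $a \le b$; and the condition $\msf{B}_{3,i}(X) = V_i$ (i.e.\ $XN_iX^\TT = V_i$, skew-symmetric) becomes the $\binom k2$ scalar quadratic conditions $x_a^\TT N_i x_b = (V_i)_{ab}$ for $a < b$ (the diagonal and lower-triangular entries being forced). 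So the total number of scalar constraints is exactly $kd_1 + \binom{k+1}2 d_2 + \binom k2 d_3$, matching the claimed main term.

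Next I would recast these as constraints on the single vector $\mathbf{x} = (x_1,\ldots,x_k) \in \mb{F}_p^{nk}$. The $kd_1$ linear forms are $\mathbf{x} \mapsto r_i^\TT x_a$, which are block-diagonal (each $x_a$ meeting each $r_i$); their linear independence over $\mb{F}_p^{nk}$ follows immediately from the linear independence of $r_1,\ldots,r_{d_1}$ over $\mb{F}_p^n$ within each block. The quadratic forms are $\mathbf{x}^\TT \widetilde{M}\,\mathbf{x}$ where $\widetilde M$ is the $nk \times nk$ block matrix with an appropriate placement of $M_i$ or $N_i$ (symmetrized, using $p>2$) in the $(a,b)$ and $(b,a)$ blocks. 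The key point to verify is the rank hypothesis of \cref{prop:equidistribution}: any nontrivial linear combination of these $nk\times nk$ block matrices has rank at least (a constant times) $r$. A nontrivial combination is a block matrix whose $(a,b)$ block, for a fixed pair $(a,b)$ with some nonzero scalar coefficient appearing, is itself a nontrivial combination $\sum_i \lambda_i M_i + \sum_j \mu_j N_j$ of rank $\ge r$ by the rank hypothesis on $\mf{B}$; since a block matrix has rank at least the rank of any single block (by restricting to the corresponding row/column coordinate subspaces), the whole combination has rank $\ge r$. Feeding the linear independence and this rank bound into \cref{prop:equidistribution} with ambient dimension $nk$ yields probability $p^{-kd_1 - \binom{k+1}2 d_2 - \binom k2 d_3} + O(p^{-r/2})$, as desired.

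The main obstacle — really the only place requiring care — is the bookkeeping in the previous paragraph: correctly translating the matrix-valued conditions $XM_iX^\TT = U_i$ into the right count of \emph{independent} scalar quadratic conditions (getting $\binom{k+1}2$ versus $\binom k2$ right for symmetric versus skew-symmetric, and checking no redundancy), and verifying that the rank of the assembled $nk\times nk$ block form inherits the rank of $\mf{B}$ up to the stated absolute constant (including the reduction, noted parenthetically in the definition of rank, between the joint rank condition and the separate conditions on $\mf{B}_2$ and $\mf{B}_3$). Once the combinatorics of the block structure is set up cleanly, the analytic content is entirely contained in the cited \cref{prop:equidistribution}, and there is nothing further to do.
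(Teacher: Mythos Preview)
Your proposal is correct and follows essentially the same approach as the paper: both view $X$ as a vector in $\mb{F}_p^{nk}$, unpack the matrix-valued constraints into $kd_1$ linear forms and $\binom{k+1}{2}d_2+\binom{k}{2}d_3$ symmetric quadratic forms (block matrices with $M_i$ or $\pm N_i$ in the appropriate $(a,b)$ and $(b,a)$ blocks), verify the rank hypothesis via the observation that some block of any nontrivial combination is itself a nontrivial $\sum a_iM_i+\sum b_jN_j$ of rank $\ge r$, and invoke \cref{prop:equidistribution}. Your write-up is in fact somewhat more explicit about the rank verification than the paper's, which simply asserts the forms are ``easily seen to be high rank''.
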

\begin{proof}
This is immediate if one treats $X\in\mb{F}_p^{k\times n}$ as a $kn$-dimensional vector. In particular for each $U_{i}$ consider the family $M_i$ of $\binom{k+1}{2}$ block matrices where all $n$ by $n$ blocks are zero except for either a diagonal block labeled $U_i$ or a pair of block symmetric with respect to the diagonal such that blocks are labeled $U_i$. Similarly for each $V_{i}$ consider the family $N_i$ of $\binom{k}{2}$ block matrices where all $n$ by $n$ blocks are zero for a pair of block symmetric with respect to the diagonal such that the block above the diagonal is labeled $V_i$ and below the diagonal is labeled $-V_i$. Note that the resulting quadratic forms are easily seen to be high rank using that the $U_i$ and $V_i$ initially where high rank. Now the desired equidistribution statement is equivalent to equidistribution of $X^\TT WX$ for all $W\in M_i,N_i$ as well the linear forms specified by $v_i$. This now follows immediately from \cref{prop:equidistribution}.
\end{proof}

Say that a random variable is \emph{$\epsilon$-equidistributed} if it takes each value in its range with equal probability within a \emph{multiplicative} error of $\epsilon$. A convenient property of this definition is that it is preserved under linear maps.

\begin{lemma}\label{lem:projection-equidistribution}
Suppose $\mbf{x}$ is a random variable taking values in $\mb{F}_p^r$ satisfying
\[\sup_{a\in\mb{F}_p^r}|p^r\mb{P}[\mbf{x} = a]-1|\le\epsilon\]
and $L\colon \mb{F}_p^r\to\mb{F}_p^s$ is a linear map with image of dimension $t$. Then for any $a\in L\mb{F}_p^n$ we have $|p^t\mb{P}[L\mbf{x} = a]-1|\le\epsilon$, whereas if $a\notin L\mb{F}_p^n$ then $\mb{P}[L\mbf{x} = a] = 0$.
\end{lemma}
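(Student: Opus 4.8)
The plan is to prove this by a direct counting argument, partitioning the fibers of $L$ and summing.

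\textbf{Setup.} First I would recall that since $\mathbf{x}$ takes values in $\mathbb{F}_p^r$ and $L$ is linear, for $a \notin L\mathbb{F}_p^r$ we trivially have $\mathbb{P}[L\mathbf{x} = a] = 0$ because $L\mathbf{x}$ always lies in the image of $L$. So the content is in the case $a \in L\mathbb{F}_p^r$. Fix such an $a$, and pick any preimage $b \in \mathbb{F}_p^r$ with $Lb = a$. Then $L^{-1}(a) = b + \ker L$, an affine subspace of $\mathbb{F}_p^r$ of dimension $r - t$ (using the rank--nullity theorem, since $\dim(\operatorname{im} L) = t$). Consequently it contains exactly $p^{r-t}$ points.

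\textbf{Main computation.} Now I would simply write
\[
\mathbb{P}[L\mathbf{x} = a] = \sum_{c \in L^{-1}(a)} \mathbb{P}[\mathbf{x} = c].
\]
By the hypothesis, each summand satisfies $(1-\epsilon)p^{-r} \le \mathbb{P}[\mathbf{x} = c] \le (1+\epsilon)p^{-r}$. Summing over the $p^{r-t}$ points of $L^{-1}(a)$ gives
\[
(1-\epsilon)p^{-t} \le \mathbb{P}[L\mathbf{x} = a] \le (1+\epsilon)p^{-t},
\]
which is exactly the claim $|p^t\mathbb{P}[L\mathbf{x}=a] - 1| \le \epsilon$. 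That completes the proof.

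\textbf{On difficulty.} There is essentially no obstacle here; the lemma is a routine observation that a multiplicative pointwise approximation to the uniform measure pushes forward under a linear (more generally, any "measure-preserving-fibers" but here just linear suffices) map to a multiplicative approximation to the uniform measure on the image, because the fibers all have the same cardinality $p^{r-t}$. The only thing one must be slightly careful about is getting the dimension count right (it is $r-t$, not $s-t$ or anything involving $s$) and noting the degenerate case $a \notin L\mathbb{F}_p^r$ separately. I would keep the write-up to a few lines.
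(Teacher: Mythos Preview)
Your proof is correct and is exactly the argument the paper gives: the paper's proof is the single sentence ``This follows immediately from the fact that the preimage of every point in $L\mathbb{F}_p^r$ has size $p^{r-t}$,'' which is precisely your fiber-counting computation written out in full.
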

\begin{proof}
This follows immediately from the fact that the preimage of every point in $L\mb{F}_p^r$ has size $p^{r-t}$.
\end{proof}

We now explicitly define the relevant lattice that the image of our pattern under $\msf{B}$ will equidistribute over, in order to state the main result of this section. Recall we have a given $J\in\mb{F}_p^{k\times k}$. Let
\[\Xi_J = \{A\in\mb{F}_p^{k\times k}: (JA)^\TT = JA\},\]
and let
\begin{align*}
\Lambda_J &= \{(-A,-A(I+J)(I-J)^{-1},A(I+J)(I-J)^{-1},A)\colon A^\TT = +A, A\in\Xi_J\},\\
\Lambda_J' &= \{(-A,-A(I+J)(I-J)^{-1},A(I+J)(I-J)^{-1},A)\colon A^\TT = -A, A\in\Xi_J\}.
\end{align*}

Also, let
\[\Psi_J = \{(x_1,x_2,x_3,x_4)\in(\mb{F}_p^k)^4\colon x_1-x_2-x_3+x_4 = 0, x_4-x_2=J(x_2-x_1)\}.\]
We make $\F_p^{k\times k}$ an inner product space with the standard inner product
\[\sang{A,B} = \sang{A,B}_{\on{HS}} = \on{tr}(A^\TT B)\]
on $\mb{F}_p^{k\times k}$. We extend this inner product to $\left(\F_p^{k\times k}\right)^4$ in the natural way, that is,\[\sang{(X_1,X_2,X_3,X_3),(Y_1,Y_2,Y_3,Y_4)}=\sang{X_1,Y_1}+\sang{X_2,Y_2}+\sang{X_3,Y_3}+\sang{X_4,Y_4}.\]

We wish to study the equidistribution of the tuple
\[(\msf{B}(X),\msf{B}(X+D),\msf{B}(X+JD),\msf{B}(X+(I+J)D))\]
as $X,D$ range over $\mb{F}_p^{k\times n}$, for a $k$-symmetrized quadratic factor $\mf{B}$. Ultimately, we will find that the components corresponding to each $\msf{B}_{1,i},\msf{B}_{2,i},\msf{B}_{3,i}$ are all ``independent'', and that each equidistributes in the following way:
\begin{align*}
(\msf{B}_{1,i}(X),\msf{B}_{1,i}(X+D),\msf{B}_{1,i}(X+JD),\msf{B}_{1,i}(X+(I+J)D))&\qquad\text{equidistributes on}\qquad\Psi_J,\\
(\msf{B}_{2,i}(X),\msf{B}_{2,i}(X+D),\msf{B}_{2,i}(X+JD),\msf{B}_{2,i}(X+(I+J)D))&\qquad\text{equidistributes on}\qquad\Lambda_J^\perp\cap(\mc{S}_k)^4, \\
(\msf{B}_{3,i}(X),\msf{B}_{3,i}(X+D),\msf{B}_{3,i}(X+JD),\msf{B}_{3,i}(X+(I+J)D))&\qquad\text{equidistributes on}\qquad\Lambda_J'^\perp\cap(\mc{S}_k')^4.
\end{align*}
Here the $\perp$ means the orthogonal subspace with respect to the inner product defined above. For ease of notation, we will write $\Lambda_J^\perp$ for $\Lambda_J^\perp\cap(\mc{S}_k)^4$ and $\Lambda_J'^\perp$ for $\Lambda_J'^\perp\cap(\mc{S}_k')^4$.

\begin{theorem}\label{thm:counting}
Suppose $J\in\mb{F}_p^{k\times k}$ is such that $J,I-J,I+J$ are invertible and $J$ has no pair of eigenvalues that are negatives of each other (over $\ol{\mb{F}}_p$), let $G = \mb{F}_p^n$, and suppose $\mf{B}$ is a $k$-symmetrized quadratic factor of rank $r$. Then for any $a\in \Psi_J^{d_1}\times (\Lambda_J^\perp)^{d_2} \times (\Lambda_J'^\perp)^4)^{d_3}$

\begin{align*}
&\mb{P}_{X,D\in G^k}[(\msf{B}(X),\msf{B}(X+D),\msf{B}(X+JD),\msf{B}(X+(I+J)D))=a]\\
&= p^{-d_1\dim(\Psi_J)-d_2\dim(\Lambda_J^\perp) - d_3\dim(\Lambda_J'^\perp)  }(1+O(p^{-r/2+2kd_1+(2\binom{k+1}{2}+k^2)d_2+(2\binom{k}{2}+k^2)d_3}))
\end{align*}
\end{theorem}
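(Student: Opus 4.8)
The plan is to reduce the statement to an equidistribution question about quadratic and linear forms over $\mb{F}_p^n$ to which \cref{prop:equidistribution} applies, and then to identify the relevant linear constraints exactly. First I would set up coordinates: write $X\in\mb{F}_p^{k\times n}$ and $D\in\mb{F}_p^{k\times n}$ together as a single element $Z\in\mb{F}_p^{2kn}$, and observe that each coordinate of the tuple $(\msf{B}(X),\msf{B}(X+D),\msf{B}(X+JD),\msf{B}(X+(I+J)D))$ is either a linear form in $Z$ (the $\msf{B}_{1,i}$ components, which evaluate $Xr_i,(X+D)r_i$, etc.) or a quadratic form in $Z$ (the $\msf{B}_{2,i}$ and $\msf{B}_{3,i}$ components, which are $XM_iX^\TT$, $(X+D)M_i(X+D)^\TT$, and so on). So the whole tuple is a vector of linear and quadratic forms in $Z$, and I would like to apply \cref{prop:equidistribution}. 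The subtlety is that these forms are \emph{not} independent: there are linear relations among the four evaluates (coming from $x_1-x_2-x_3+x_4=0$ and the $J$-relation, i.e.\ membership in $\Psi_J$ resp.\ $\Lambda_J^\perp,\Lambda_J'^\perp$), so the image lies in a proper subspace. The correct move is to first quotient out by these relations: restrict to an explicit set of ``free'' forms whose span is a complement, check that these remaining forms still satisfy the rank hypothesis of \cref{prop:equidistribution} (up to the stated loss in the exponent), apply the proposition to get equidistribution of the free coordinates, and then use \cref{lem:projection-equidistribution} to push forward along the inclusion of the image subspace.

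Carrying this out, the key steps in order are: (1) Prove the ``soft'' direction — that the tuple always lies in $\Psi_J^{d_1}\times(\Lambda_J^\perp)^{d_2}\times(\Lambda_J'^\perp)^{d_3}$. For the linear part this is the elementary computation that $(Xr_i,(X+D)r_i,(X+JD)r_i,(X+(I+J)D)r_i)$ satisfies the two defining equations of $\Psi_J$. For the quadratic parts one checks that for any $A\in\Xi_J$ (so that $JA$ is symmetric — note $\Xi_J$ is exactly the relevant index set because $XM_iX^\TT$ pairs against $A$ via the Hilbert--Schmidt inner product and one needs $M_i$ symmetric), the inner product of the quadruple $(XM_iX^\TT,\dots)$ with the generator $(-A,-A(I+J)(I-J)^{-1},A(I+J)(I-J)^{-1},A)$ of $\Lambda_J$ vanishes identically in $X,D$; this is a bilinear identity that uses precisely the relation $x_4-x_2=J(x_2-x_1)$ lifted to the matrix level. (2) Compute $\dim\Psi_J$, $\dim\Lambda_J^\perp$, $\dim\Lambda_J'^\perp$ — here $\dim\Psi_J=2k$ since $\Psi_J$ is parametrized freely by $(x_1,x_2)$, and $\dim\Lambda_J^\perp=4\dim\mc{S}_k-\dim\Lambda_J$, $\dim\Lambda_J'^\perp=4\dim\mc{S}_k'-\dim\Lambda_J'$, with $\dim\Lambda_J=\dim(\Xi_J\cap\mc{S}_k)$ and $\dim\Lambda_J'=\dim(\Xi_J\cap\mc{S}_k')$; this is where I expect the eigenvalue condition on $J$ to be used, to pin down $\dim(\Xi_J\cap\mc{S}_k)$ and $\dim(\Xi_J\cap\mc{S}_k')$ (equivalently, the spectral condition controls which symmetric/skew-symmetric $A$ have $JA$ symmetric, hence the rank of the map and the exact dimension count — see the remark in the text that the dimension of the equidistribution space changes precisely when this condition fails). (3) Choose an explicit complement: pick $2k$ of the linear forms and $d_2\dim(\Lambda_J^\perp)+d_3\dim(\Lambda_J'^\perp)$ of the quadratic forms spanning a complement to the relation space, and verify the high-rank hypothesis: a nontrivial linear combination of the chosen quadratic forms corresponds to a nontrivial linear combination $\sum a_i M_i+\sum b_j N_j$ fed through the (linear in $Z$) substitution $X\mapsto X, X+D, X+JD, X+(I+J)D$; since $J,I\pm J$ are invertible this substitution is a linear isomorphism up to bounded-corank issues, so the resulting quadratic form on $\mb{F}_p^{2kn}$ has rank at least $\mr{rank}(\sum a_iM_i+\sum b_jN_j)/O(1)\ge r/O(1)$. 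The explicit exponent loss $2kd_1+(2\binom{k+1}{2}+k^2)d_2+(2\binom{k}{2}+k^2)d_3$ in the error term comes from tracking how many linear/quadratic forms are being grouped and how the error $O(p^{-r/2})$ accumulates when one sums over the (at most that many) coordinates one has projected away. (4) Apply \cref{prop:equidistribution} to the free forms, then \cref{lem:projection-equidistribution} (or directly the fiber-size count) to transfer $\epsilon$-equidistribution along the projection onto the image subspace, converting the additive error $O(p^{-r/2})$ into the multiplicative $1+O(p^{-r/2+\cdots})$ form stated.

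The main obstacle I anticipate is step (2), the exact dimension count, and more specifically verifying that the spectral hypothesis is exactly what makes the chosen quadratic forms linearly independent modulo the relations $\Lambda_J,\Lambda_J'$ — equivalently, that when $J$ has eigenvalues $\mu,-\mu$ over $\ol{\mb{F}}_p$ there is an ``extra'' symmetric or skew-symmetric $A$ with $JA$ of the correct symmetry type producing an unexpected relation and dropping the dimension. Concretely one wants to understand $\Xi_J\cap\mc{S}_k$ and $\Xi_J\cap\mc{S}_k'$: the condition $(JA)^\TT=JA$ with $A^\TT=\pm A$ says $A^\TT J^\TT=JA$, i.e.\ $AJ^\TT=\pm JA$, a Sylvester-type equation whose solution space dimension is governed by the interaction of the spectra of $J$ and $\pm J^\TT$ (hence of $J$ and $\pm J$), and it is precisely the coincidence of an eigenvalue of $J$ with an eigenvalue of $-J$ that enlarges the solution space. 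Handling this cleanly — probably by passing to a Jordan-type normal form of $J$ over $\ol{\mb{F}}_p$, or by a generating-function/rational-canonical-form argument, and being careful that everything is over a field of odd characteristic so that the symmetric/skew-symmetric decomposition behaves well — is the technical heart of the argument; the rest is bookkeeping with Cauchy--Schwarz-style rank bounds and the already-quoted equidistribution input.
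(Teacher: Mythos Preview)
Your high-level strategy is right and matches the paper's: equidistribute a family of linear and quadratic forms in $(X,D)$ via \cref{prop:equidistribution}, then push forward along a linear map using \cref{lem:projection-equidistribution}. The paper executes this more cleanly than your step (3), though. Rather than hand-selecting $\dim\Lambda_J^\perp$ ``free'' quadratic forms from the four evaluates and separately verifying a high-rank condition for them, the paper first equidistributes the \emph{abstract atoms} $(\msf{B}(X),\msf{B}(D),\msf{B}'(X,D))$ where $\msf{B}'(X,D)=((XM_iD^\TT)_i,(XN_jD^\TT)_j)$; these are manifestly independent and of high rank (this is \cref{prop:abstract-atom-equidistribution}, an immediate consequence of \cref{prop:equidistribution}), so no delicate rank bookkeeping is needed. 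One then simply observes that each entry of the $4$-tuple $(\msf{B}(X),\ldots,\msf{B}(X+(I+J)D))$ is a fixed linear function of the abstract atoms, so \cref{lem:projection-equidistribution} transfers equidistribution to the image, and all the work goes into identifying that image.

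Your diagnosis of where the spectral condition enters is off, and this is the one real gap. You locate it in computing $\dim(\Xi_J\cap\mc{S}_k)$ and $\dim(\Xi_J\cap\mc{S}_k')$ via a Sylvester-equation count. But $\Lambda_J$ and $\Lambda_J'$ are \emph{defined} via $\Xi_J$, so those dimensions are what they are regardless of the hypothesis; your step (1) already shows the image lies in $\Lambda_J^\perp$ without using the spectral condition. What actually requires the hypothesis is the reverse containment: showing that \emph{every} relation among the four quadratic evaluates is of the special $\Lambda_J$ (resp.\ $\Lambda_J'$) shape, i.e.\ that the image is not strictly smaller. Writing out the relation $\sum_i\on{tr}(A_i^\TT(X+c_iD)M(X+c_iD)^\TT)=0$ and separating the $XMX^\TT$, $DMD^\TT$, $XMD^\TT$ components yields a system that forces $(J^\TT)^2A_4=A_4J^2$; the crucial step is then to deduce $J^\TT A_4=A_4J$, which is exactly the statement $J\in\mb{F}_p[J^2]$. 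The paper isolates this as \cref{lem:A-algebra}: under the no-negative-pair condition, $\gcd(Q_J(t),Q_J(-t))=1$ forces $\dim\mb{F}_p[J]=\dim\mb{F}_p[J^2]$ and hence $J\in\mb{F}_p[J^2]$. With this in hand the system collapses to the one-parameter family $\Lambda_J$ (this is \cref{lem:f-eq}). Your Jordan-form route could in principle recover this, but you would be re-deriving \cref{lem:A-algebra} in disguise; the Sylvester count of $\Xi_J$ alone does not do the job.
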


The approach is similar to the proof of \cref{prop:factor-equidistribution}. We want to consider $(X,D)$ as a $2kn$-dimensional vector and apply \cref{prop:equidistribution}, but now some linear dependencies will appear.\footnote{These linear dependencies appear for the same reason that $(x^2,(x+d)^2,(x+2d)^2,(x+3d)^2)$ satisfies a linear equation.} We will instead apply equidistribution on a set of ``abstractly independent'' forms to which we can indeed apply \cref{prop:equidistribution}. Then we realize $(\msf{B}(X),\msf{B}(X+D),\msf{B}(X+JD),\msf{B}(X+(I+J)D))$
as the image of those elements under a linear map, and apply \cref{lem:projection-equidistribution}.

As a first step, we state the necessary equidistribution over these ``abstract atoms''. For convenience, given a $k$-symmetrized quadratic factor $\fB$, define an attached map
\[\msf{B}'(X,D) = ((XM_iD^\TT)_{i\in[d_2]},(XN_iD^\TT)_{i\in[d_3]}).\]
\begin{proposition}\label{prop:abstract-atom-equidistribution}
Suppose $\mf{B}$ is a $k$-symmetrized quadratic factor of rank $r$. Then for any $a\in((\mb{F}_p^k)^{d_1}\times(\mc{S}_k)^{d_2}\times(\mc{S}_k')^{d_3})^2\times(\mb{F}_p^{k\times k})^{d_2+d_3}$ we have
\[\mb{P}_{X,D\in G^k}[(\msf{B}(X),\msf{B}(D),\msf{B}'(X,D))=a] = p^{-2kd_1-(2\binom{k+1}{2}+k^2)d_2-(2\binom{k}{2}+k^2)d_3}+O(p^{-r/2}).\]
\end{proposition}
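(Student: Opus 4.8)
The plan is to observe that, after a harmless permutation of coordinates, the tuple $(\msf{B}(X),\msf{B}(D),\msf{B}'(X,D))$ is exactly the value $\msf{C}(Z)$ of a \emph{$2k$}-symmetrized quadratic factor $\mf{C}$ evaluated at the matrix $Z=\binom{X}{D}\in\mb{F}_p^{2k\times n}$. Once this is set up, the proposition becomes a direct invocation of \cref{prop:factor-equidistribution} with $k$ replaced by $2k$.

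Concretely, I would first stack $X$ and $D$ into a single $Z\in\mb{F}_p^{2k\times n}=G^{2k}$; as $(X,D)$ ranges uniformly over $(G^k)^2$, so does $Z$ over $G^{2k}$. Using that each $M_i$ is symmetric and each $N_i$ skew-symmetric, one checks the block identities
\begin{gather*}
Zr_i=\begin{pmatrix}Xr_i\\ Dr_i\end{pmatrix},\qquad ZM_iZ^\TT=\begin{pmatrix}XM_iX^\TT & XM_iD^\TT\\ DM_iX^\TT & DM_iD^\TT\end{pmatrix},\\
ZN_iZ^\TT=\begin{pmatrix}XN_iX^\TT & XN_iD^\TT\\ DN_iX^\TT & DN_iD^\TT\end{pmatrix},
\end{gather*}
with $ZM_iZ^\TT\in\mc{S}_{2k}$ and $ZN_iZ^\TT\in\mc{S}_{2k}'$. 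Since $DM_iX^\TT=(XM_iD^\TT)^\TT$ and $DN_iX^\TT=-(XN_iD^\TT)^\TT$, the lower-left blocks carry no new information, so the off-diagonal block of each matrix is a free element of $\mb{F}_p^{k\times k}$ while the diagonal blocks lie in $\mc{S}_k$ (resp.\ $\mc{S}_k'$). This gives the natural identifications $\mb{F}_p^{2k}\cong(\mb{F}_p^k)^2$, $\mc{S}_{2k}\cong(\mc{S}_k)^2\times\mb{F}_p^{k\times k}$, $\mc{S}_{2k}'\cong(\mc{S}_k')^2\times\mb{F}_p^{k\times k}$, under which the coordinates of $\msf{C}(Z)$ are precisely a reordering of those of $(\msf{B}(X),\msf{B}(D),\msf{B}'(X,D))$, where $\mf{C}$ is the $2k$-symmetrized quadratic factor defined by the very same lists $(r_i)_{i\in[d_1]}$, $(M_i)_{i\in[d_2]}$, $(N_i)_{i\in[d_3]}$.

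The remaining step is to note that the rank of a symmetrized quadratic factor is defined purely through the linear independence of the $r_i$ and the $\mb{F}_p$-ranks of linear combinations of the $M_i,N_i$ \emph{as $n\times n$ matrices}; it is insensitive to the outer parameter $k$. Hence $\mf{C}$ has rank at least $r$, and \cref{prop:factor-equidistribution} applied to $\mf{C}$ yields $\mb{P}_{Z\in G^{2k}}[\msf{C}(Z)=a]=p^{-2kd_1-\binom{2k+1}{2}d_2-\binom{2k}{2}d_3}+O(p^{-r/2})$; the elementary identities $\binom{2k+1}{2}=2\binom{k+1}{2}+k^2$ and $\binom{2k}{2}=2\binom{k}{2}+k^2$ then match the claimed exponent. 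I do not expect any genuine obstacle here: the only thing needing care is the block bookkeeping, namely checking that the (skew-)symmetry of the $M_i$/$N_i$ makes the off-diagonal blocks $XM_iD^\TT$, $XN_iD^\TT$ exactly the new coordinates (so the dimension count is tight, with nothing double-counted) and that the identifications of $\mc{S}_{2k}$ and $\mc{S}_{2k}'$ with products of the $k$-sized spaces and a free $k\times k$ block are the correct ones.
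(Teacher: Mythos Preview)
Your proposal is correct and is essentially the paper's own approach: both view $(X,D)$ as a single $2kn$-dimensional object and reduce to the already-established equidistribution for a single variable. Your packaging is slightly tidier, since by observing that the same lists $(r_i),(M_i),(N_i)$ define a $2k$-symmetrized factor of the same rank you can invoke \cref{prop:factor-equidistribution} directly rather than re-running its proof from \cref{prop:equidistribution}.
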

This follows immediately by applying \cref{prop:equidistribution} to $(X,D)$ viewed as an element of $\F_p^{2kn}$. The proof is exactly analogous to the proof of \cref{prop:factor-equidistribution} from \cref{prop:equidistribution}.

To complete the proof of \cref{thm:counting}, we note that the desired map, $(X,D)\mapsto (\msf{B}(X),\msf{B}(X+D),\msf{B}(X+JD),\msf{B}(X+(I+J)D)$, can be written as the map $(X,D)\mapsto (\msf{B}(X),\msf{B}(D),\msf{B}'(X,D))$ composed with a linear transformation. For example,
\[(X+JD)M_i(X+JD)^\TT = XM_iX^\TT + J(DM_iX^\TT) + XM_iD^\TT J^\TT + J(DM_iD^\TT)J^\TT\]
and $XM_iD^\TT = (DM_iX^\TT)^\TT$.
Therefore it suffices to understand the linear constraints induced by this last linear transformation.

\subsection{Deriving the linear constraints}\label{sub:algebraic-inputs}
To this end we prove the following abstract linear algebra statement, which essentially encodes the eigenvalue condition in \cref{thm:main}.
\begin{lemma}\label{lem:A-algebra}
If $A\in\mb{F}_p^{k\times k}$ is invertible and has no pair of eigenvalues (over $\ol{\mb{F}}_p$) which are negatives of each other, then $A\in\mb{F}_p[A^2]$.
\end{lemma}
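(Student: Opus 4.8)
The plan is to reduce to a statement about polynomials over $\ol{\mb{F}}_p$ via the structure theory of $A$, and then produce an explicit polynomial $q$ with $q(A^2) = A$. The key observation is that $A \in \mb{F}_p[A^2]$ if and only if there is a polynomial $q \in \mb{F}_p[t]$ with $q(A^2) = A$; equivalently, working with the minimal polynomial $\mu$ of $A$, we need $q(t^2) \equiv t \pmod{\mu(t)}$ to have a solution $q$. By the Chinese Remainder Theorem applied to the factorization of $\mu$ into prime powers over $\mb{F}_p$, it suffices to solve this congruence modulo each $\pi(t)^e$ where $\pi$ is an irreducible factor of $\mu$. So the task becomes: show that for each such prime power, the map $q(t) \mapsto q(t^2) \bmod \pi(t)^e$ hits $t$ (as an element of $\mb{F}_p[t]/(\pi(t)^e)$).

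The core of the argument is the following: the squaring map $\sigma\colon \lambda \mapsto \lambda^2$ on $\ol{\mb{F}}_p$ is injective on the set of eigenvalues of $A$ precisely because no two eigenvalues are negatives of each other (and $A$ invertible means $0$ is not an eigenvalue, so $\sigma$ is genuinely two-to-one only at pairs $\{\lambda, -\lambda\}$, which are excluded) — more than that, $\sigma$ is injective on the multiset of roots of $\mu$ counted with multiplicity, since if $\lambda \ne \mu$ are roots with $\lambda^2 = \mu^2$ then $\mu = -\lambda$. Hence $\mu(t)$ and $t^2$ interact nicely: I claim $\mu(t) \mid \wt\mu(t^2)$ where $\wt\mu$ is the polynomial whose roots are the squares of the roots of $\mu$ (with multiplicity), and moreover $\gcd(\mu(t), \text{stuff})$ behaves so that one can invert the substitution. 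Concretely, in $\mb{F}_p[t]/(\mu(t))$, I want to show $t$ lies in the subring generated by $t^2$. Let $R = \mb{F}_p[t]/(\mu(t))$ and let $S = \mb{F}_p[t^2] \subseteq R$ be the subring generated by $\bar t^{\,2}$. Then $R$ is generated as an $S$-module by $1$ and $\bar t$, so $[R:S] \le 2$ as modules; it suffices to show $R = S$, equivalently that $\bar t \in S$. Tensoring up to $\ol{\mb{F}}_p$ and using that $A$ is (for this purpose) conjugate to a matrix in rational canonical form, $R \otimes \ol{\mb{F}}_p \cong \prod \ol{\mb{F}}_p[t]/((t-\lambda_i)^{e_i})$ and $S \otimes \ol{\mb{F}}_p$ is the image of $\ol{\mb{F}}_p[u]$ under $u \mapsto (\text{the element } t^2 \text{ in each factor})$. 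By CRT again over $\ol{\mb{F}}_p$, the map $\ol{\mb{F}}_p[u] \to \prod \ol{\mb{F}}_p[t]/((t-\lambda_i)^{e_i})$ is surjective exactly when the "evaluation-plus-derivatives" data $(\lambda_i^2, \text{jet of } t^2 \text{ at } \lambda_i)$ are independent across distinct factors and the jet is non-degenerate within each factor; the first holds because the $\lambda_i^2$ are distinct (the no-negatives condition, as above), and the second because $\frac{d}{dt}(t^2) = 2t$ is invertible at $\lambda_i \ne 0$ (here $p \ne 2$, which is given). Surjectivity of the base-changed map forces $R = S$ by faithful flatness / dimension count, hence $\bar t \in S$, i.e.\ $A \in \mb{F}_p[A^2]$.

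I expect the main obstacle to be handling the higher-multiplicity case ($e_i > 1$) cleanly: one must check that $t^2$, viewed in $\ol{\mb{F}}_p[t]/((t-\lambda_i)^{e_i})$, generates that whole local ring over $\ol{\mb{F}}_p$, which comes down to $2\lambda_i \ne 0$ (so $t^2 - \lambda_i^2$ has a simple zero at $\lambda_i$ and is therefore a uniformizer), plus the separation of the $\lambda_i^2$ across factors. A slicker route that avoids base change: find a polynomial $f \in \mb{F}_p[t]$ with $f(t^2) \equiv t \pmod{\mu(t)}$ directly by Hensel-type lifting — start from the fact that the reduced polynomials $t^2$ and $\mu(t)$ are such that $\gcd(t^2 - a, \mu(t)/\text{gcd})$ arguments give a solution modulo the radical, then lift through each power using that $2t$ is a unit mod $\mu$ (again $p$ odd, $A$ invertible). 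Either way, the non-negated-eigenvalues hypothesis is exactly what guarantees injectivity of squaring on the spectrum, which is the only place it enters; invertibility of $A$ and $p$ odd are needed so that $2t$ is a unit in $\mb{F}_p[t]/(\mu(t))$, making the substitution $t \mapsto t^2$ étale and hence the lifting go through.
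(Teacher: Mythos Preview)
Your argument is correct, but the paper takes a much shorter route. Rather than base-changing to $\ol{\mb{F}}_p$ and analyzing each local factor $\ol{\mb{F}}_p[t]/((t-\lambda_i)^{e_i})$ separately, the paper does a pure dimension count over $\mb{F}_p$: since $\mb{F}_p[A^2]\subseteq\mb{F}_p[A]$, it suffices to show $\deg Q_A\le\deg Q_{A^2}$ where $Q_M$ denotes the minimal polynomial. The single observation is that the no-negated-eigenvalues hypothesis (together with invertibility, so $0$ is not a root) gives $\gcd(Q_A(t),Q_A(-t))=1$; hence any $g$ with $g(A^2)=0$ satisfies $Q_A(t)\mid g(t^2)$ and therefore $Q_A(t)Q_A(-t)\mid g(t^2)$, forcing $\deg g\ge\deg Q_A$. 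Your approach is longer but more explicit: it essentially produces the polynomial $q$ with $q(A^2)=A$ by showing that $u\mapsto t^2$ is \'etale on each completed local piece (uniformizer because $2\lambda_i\ne 0$) and that CRT separates the pieces (because the $\lambda_i^2$ are distinct). Both arguments use the hypothesis in exactly the same place---to guarantee that squaring is injective on the spectrum---but the paper packages this as the coprimality $\gcd(Q_A(t),Q_A(-t))=1$, which lets one avoid base change, Hensel lifting, and the case analysis on multiplicities entirely.
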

\begin{proof}
Given any matrix $M$, let $Q_M\in\mb{F}_p[t]$ be the monic polynomial of minimum degree satisfying $Q_M(M) = 0$ (this exists by the Cayley--Hamilton theorem and the fact that $\mb{F}_p[t]$ is a principal ideal domain). Then $\mb{F}_p[M]\cong\mb{F}_p[t]/(Q_M(t))$ as $\mb{F}_p$-algebras, and the dimension as an $\mb{F}_p$-vector space is $\deg Q_M$.

We clearly have $A\in\mb{F}_p[A^2]$ if and only if $\mb{F}_p[A] = \mb{F}_p[A^2]$, which will certainly follow from
\[\dim_{\mb{F}_p}\mb{F}_p[A]\le\dim_{\mb{F}_p}\mb{F}_p[A^2]\]
due to the obvious containment. Now note that if $g(A^2) = 0$ then $Q_A(t)|g(t^2)$ in $\mb{F}_p[t]$. By hypothesis, we have $\gcd(Q_A(t),Q_A(-t)) = 1$, hence $Q_A(t)Q_A(-t)|g(t^2)$. Thus
\[2\dim_{\mb{F}_p}\mb{F}_p[A] = \deg (Q_A(t)Q_A(-t))\le 2\deg g.\]
Since this holds for all such $g$, it in particular holds for $g = Q_{A^2}$, which implies the result.
\end{proof}

Next we need the following abstract matrix equation which is used to derive the desired equidistribution statement.
\begin{lemma}\label{lem:f-eq}
Suppose $J\in\mb{F}_p^{k\times k}$ is such that $J,I-J,I+J$ are invertible and $J$ has no pair of eigenvalues that are negatives of each other.
\begin{itemize}
    \item Let $M = M^\TT$ be nonzero. Then $(A_1,A_2,A_3,A_4)\in(\mc{S}_k)^4$ and 
    \begin{align*}
      \on{tr}&(A_1^\TT XMX^\TT  + A_2^\TT (X+D)M (X+D)^\TT  + A_3^\TT (X+JD) M (X+JD)^\TT \\
      &\qquad\qquad+ A_4^\TT (X+(I+J)D)M (X+(I+J)D)^\TT) =0  
    \end{align*}
    for all $X,D\in (\mb{F}_p^{n})^k$ if and only if $(A_1,A_2,A_3,A_4)\in \Lambda_J$.
    \item Let $M = -M^\TT$ be nonzero. Then $(A_1,A_2,A_3,A_4)\in(\mc{S}_k')^4$ and 
    \begin{align*}
      \on{tr}&(A_1^\TT XMX^\TT  + A_2^\TT (X+D)M (X+D)^\TT  + A_3^\TT (X+JD) M (X+JD)^\TT \\
      &\qquad\qquad+ A_4^\TT (X+(I+J)D)M (X+(I+J)D)^\TT) =0  
    \end{align*}
    for all $X,D\in (\mb{F}_p^{n})^k$ if and only if $(A_1,A_2,A_3,A_4)\in \Lambda_J'$.
\end{itemize}
\end{lemma}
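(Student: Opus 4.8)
The plan is to expand the trace expression as a quadratic form in $(X,D)$ viewed as an element of $(\mb{F}_p^n)^{2k}$, collect the coefficients of the three ``independent'' monomial blocks $XMX^\TT$, $DMD^\TT$, and $XMD^\TT$ (using that $\on{tr}(A^\TT XMD^\TT) + \on{tr}(A^\TT DMX^\TT)$ combine via $M = \pm M^\TT$), and then extract the resulting matrix identities. Concretely, after expanding each term like $(X+JD)M(X+JD)^\TT = XMX^\TT + XMD^\TT J^\TT + JDMX^\TT + JDMD^\TT J^\TT$, the vanishing of the trace for all $X,D$ is equivalent to three simultaneous conditions: the $XMX^\TT$-coefficient gives $\on{tr}((A_1+A_2+A_3+A_4)^\TT XMX^\TT) = 0$ for all $X$, the $DMD^\TT$-coefficient gives a condition involving $A_2 + J^\TT A_3 J + (I+J)^\TT A_4 (I+J)$, and the cross term $XMD^\TT$ gives a condition involving $A_2 + J^\TT(\text{or }A_3 J) + \ldots$. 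The first step is to show these three matrix conditions, when combined with $A_i \in \Xi_J$ (i.e.\ $JA_i$ symmetric, which is automatic from the structure), pin down $(A_1,A_2,A_3,A_4)$ up to the single free symmetric (resp.\ skew-symmetric) parameter $A := A_4$, yielding exactly the parametrization defining $\Lambda_J$ (resp.\ $\Lambda_J'$).

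The key subtlety is turning ``$\on{tr}(C^\TT XMX^\TT) = 0$ for all $X \in (\mb{F}_p^n)^k$'' into a clean statement about the $k\times k$ matrix $C$. Writing $X$ with rows $x_1^\TT,\ldots,x_k^\TT$, one computes $XMX^\TT$ has $(a,b)$ entry $x_a^\TT M x_b$, so $\on{tr}(C^\TT XMX^\TT) = \sum_{a,b} C_{ab}\, x_a^\TT M x_b$. Because $M$ is a fixed nonzero (skew-)symmetric $n\times n$ matrix and $n$ is large, choosing the $x_a$ appropriately shows this vanishes identically iff the symmetric part (resp.\ skew part) of $C$ vanishes --- i.e.\ $C + C^\TT = 0$ in the symmetric-$M$ case forces $C$ skew, but since we already know all $A_i$ symmetric, $C$ is symmetric, hence $C = 0$; in the skew-$M$ case the analogous reasoning forces $C = 0$ among skew-symmetric matrices. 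So each of the three block-coefficient conditions becomes an exact $k\times k$ matrix equation. From $A_1 + A_2 + A_3 + A_4 = 0$ and the other two relations, I then solve: the cross-term relation should read (after symmetrizing) something like $A_2 + A_4 + J^\TT(A_3 + A_4) = 0$ combined with the pure-$D$ relation $A_2 + J^\TT A_3 J + (I+J)^\TT A_4 (I+J) = 0$, and eliminating $A_1, A_2, A_3$ in favor of $A_4$ produces $A_3 = -A_4(I+J)(I-J)^{-1}$-type expressions --- here is where invertibility of $I - J$ enters --- matching the definition of $\Lambda_J$.

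The main obstacle I anticipate is the linear-algebra bookkeeping in this elimination: one must verify that the system is consistent exactly when the auxiliary constraint $A_4 \in \Xi_J$ (that $JA_4$ is symmetric) holds, and this is precisely where Lemma~\ref{lem:A-algebra} and the no-negated-eigenvalues hypothesis on $J$ should be invoked --- the condition $JA = (JA)^\TT$ together with $A = \pm A^\TT$ should be what makes the expressions $A(I+J)(I-J)^{-1}$ land back in the correct symmetry class, using that $A$ commutes with polynomials in $J$ in an appropriate twisted sense (via $A \in \mb{F}_p[J^2]$-type considerations coming from $J \in \mb{F}_p[J^2]$). I would handle the forward direction (any element of $\Lambda_J$ satisfies the trace identity) by direct substitution first, as a sanity check on the algebra, and then do the harder converse by the coefficient-extraction argument above. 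I expect the forward direction to be a short computation and the converse to carry essentially all the content, with the eigenvalue hypothesis being used exactly once, at the point of confirming the symmetry of $A(I+J)(I-J)^{-1}$.
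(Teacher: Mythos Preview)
Your approach is correct and matches the paper's: expand, separate into the three coefficient blocks $XMX^\TT$, $DMD^\TT$, $DMX^\TT$, extract the three $k\times k$ matrix equations
\[A_1+A_2+A_3+A_4=0,\qquad A_2+J^\TT A_3 J+(I+J)^\TT A_4(I+J)=0,\qquad A_2+A_3 J+A_4(I+J)=0,\]
and solve. One correction to your narrative: membership $A_4\in\Xi_J$ is not ``automatic from the structure'' --- it is precisely what must be \emph{derived}, and this is the whole content of the spectral hypothesis. In the paper one combines the last two equations (using symmetry of $A_2,A_4$) to obtain $(J^\TT)^2 A_4 = A_4 J^2$, and only then invokes $J\in\mb{F}_p[J^2]$ from Lemma~\ref{lem:A-algebra} to upgrade this to $J^\TT A_4 = A_4 J$, i.e.\ $A_4\in\Xi_J$ (and likewise $A_3\in\Xi_J$); after that the system collapses immediately to the $\Lambda_J$ parametrization. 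Your anticipation that the eigenvalue hypothesis is used exactly once, via Lemma~\ref{lem:A-algebra}, is right --- just be sure your write-up derives $\Xi_J$ rather than assuming it.
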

\begin{proof}
We prove the first claim as the second is analogous. Note that 
\begin{align*}
      \on{tr}&(A_1^\TT XMX^\TT  + A_2^\TT (X+D)M (X+D)^\TT  + A_3^\TT (X+JD) M (X+JD)^\TT \\
      &\qquad\qquad+ A_4^\TT (X+(I+J)D)M (X+(I+J)D)^\TT) =0  
    \end{align*}
implies that 
\[\on{tr}((A_1^\TT +A_2^\TT +A_3^\TT +A_4^\TT)XMX^\TT) = 0,\]
\[\on{tr}(A_2^\TT (D)M (D)^\TT  + A_3^\TT (JD)M (JD)^\TT 
     + A_4^\TT ((I+J)D)M ((I+J)D)^\TT) = 0,\]
taking $D = 0$ and $X = 0$ respectively. Using that $\on{tr}(\cdot)$ is additive along with the initial condition, we derive
\begin{align*}
\on{tr}&((A_1^\TT +A_2^\TT +A_3^\TT +A_4^\TT)XMX^\TT) = 0, \\
\on{tr}&(A_2^\TT (D)M (D)^\TT  + A_3^\TT (JD)M (JD)^\TT 
     + A_4^\TT ((I+J)D)M ((I+J)D)^\TT) =0,\\
\on{tr}&(A_2^\TT (DMX^\TT + XMD^\TT) + A_3^\TT (JDMX^\TT + XM(JD)^\TT) \\
     &\qquad \qquad+ A_4^\TT ((J+I)DMX^\TT + XM((J+I)D)^\TT)=0.
\end{align*}
Using that trace $\on{tr}(A^\TT B^\TT) = \on{tr}(BA) = \on{tr}(AB)$ the above conditions are equivalent to 
\begin{align*}
\on{tr}&((A_1^\TT +A_2^\TT +A_3^\TT +A_4^\TT)XMX^\TT) = 0 \\
\on{tr}&((A_2^\TT + J^\TT A_3^\TT J + (I+J)^\TT A_4^\TT (I+J))(DMD)^\TT) =0\\
\on{tr}&((2A_2^\TT + 2A_3^\TT J + 2A_4^\TT (I+J)) (DMX^\TT))=0.
\end{align*}
To derive the last, we used both $A_i^\TT = A_i$ and $M^\TT = M$. (In fact, one obtains identical equations in the skew-symmetric case.)

Since $M$ is nonzero and symmetric we have that $\{XMX^\TT\}$, $\{DMD^\TT\}$ each span the space of all $k\times k$ symmetric matrices while $\{DMX^\TT\}$ spans the space of all $k\times k$ matrices. This implies that
\begin{align*}
A_1^\TT +A_2^\TT +A_3^\TT +A_4^\TT &= 0 \\
A_2^\TT + J^\TT A_3^\TT J + (I+J)^\TT A_4^\TT (I+J) &= 0\\
A_2^\TT + A_3^\TT J + A_4^\TT (I+J) &= 0
\end{align*}
since the $A_i$ are symmetric.

Note that the second and third equations imply that 
\[(J^\TT-I) A_2^\TT = A_4^\TT (I+J).\]
Noting that $A_2^\TT, A_4^\TT$ are symmetric we find that
\[A_2^\TT = (J^\TT-I)^{-1}A_4^\TT (I+J) = (I+J^\TT) A_4^\TT (J-I)^{-1}.\]
This is equivalent to 
\[(J^\TT)^2A_4^\TT = A_4^\TT J^2.\]
It follows that for every polynomial $Q(t)\in\mb{F}_p[t]$ we have
\[Q((J^\TT)^2)A_4^\TT = A_4^\TT Q(J^2).\]
Now $J\in \mb{F}_p[J^2]$ by \cref{lem:A-algebra}, so it follows that 
\[J^\TT A_4^\TT = A_4^\TT J.\]

A similar combination of the second and third equations implies that
\[J^\TT A_2^\TT = -A_3^\TT J\]
and therefore 
\[(J^\TT)^{-1}A_3^\TT J = (J^\TT)A_3^\TT J^{-1}.\]
This similarly implies that $J^\TT A_3^\TT = A_3^\TT J
$. Therefore we deduce
\begin{align*}
A_1^\TT +A_2^\TT +A_3^\TT +A_4^\TT &= 0 \\
A_2^\TT + A_3^\TT J^2 + A_4^\TT (I+J)^2 &=0\\
A_2^\TT + A_3^\TT J + A_4^\TT (I+J) &=0.
\end{align*}
Subtracting the last two equations gives that 
\[A_3^\TT = A_4^\TT(I+J)(I-J)^{-1}.\]
Substituting into the last equation gives 
\[A_2^\TT = A_4^\TT(I+J)(-I-J(I-J)^{-1}) = -A_3^\TT.\]
Finally using the first equation this implies that $A_1^\TT = -A_4^\TT$. Therefore we have proven that $(A_1,A_2,A_3,A_4)\in \Lambda_J$. The reverse implication is a straightforward calculation which we omit.
\end{proof}
We are now ready to prove \cref{thm:counting}.
\begin{proof}[Proof of \cref{thm:counting}]
This is almost immediate from \cref{prop:abstract-atom-equidistribution,lem:projection-equidistribution,lem:f-eq}. The noted fact that the desired function is the image of the function in \cref{prop:abstract-atom-equidistribution} under a linear mapping along with \cref{lem:projection-equidistribution} demonstrates that there is equidistribution over some subspace (with the multiplicative error term preserved).

This subspace is precisely the span of all possible vectors $(\msf{B}(X),\msf{B}(X+D),\msf{B}(X+JD),\msf{B}(X+(I+J)D))$. Due to the independence demonstrated in \cref{prop:abstract-atom-equidistribution}, we see that the subspace factors as a direct sum.

\cref{lem:f-eq} characterizes the resulting vector spaces for $\msf{B}_{2,i},\msf{B}_{3,i}$, since it demonstrates the form of all orthogonal vectors in corresponding host spaces (either tuples of symmetric or skew-symmetric matrices). For $\msf{B}_1$, it is easy to check that vectors of the form $(Xr,(X+D)r,(X+JD)r,(X+(I+J)D)r)$ span the space $\Psi_J$ when $r\in\mb{F}_p^n\setminus 0$. Indeed, the orthogonal vectors of the form $(\vec{a}_1,\vec{a}_2,\vec{a}_3,\vec{a}_3)$ are precisely those that satisfy
\[\vec{a}_1+\vec{a}_2+\vec{a}_3+\vec{a}_4 = \vec{a}_2+\vec{a}_3J+\vec{a}_4(I+J) = 0.\]
All such vectors are spans of $(\vec{t},-\vec{t},-\vec{t},\vec{t})$ and $(J\vec{t},-(I+J)\vec{t},0,\vec{t})$, which matches the orthogonal space of $\Psi_J$. This completes the proof.
\end{proof}

\section{Popular differences for \texorpdfstring{$4$}{4}-point patterns}\label{sec:4-point-popular-differences}

We now have the tools to prove the following popular difference result for four-point patterns. This theorem immediately implies the result stated in the introduction, \cref{thm:main}.

\begin{theorem}\label{thm:main-functional}
Fix $k\ge 1$ and an odd prime $p$. Let $M_1,M_2$ be $k\times k$ matrices with coefficients in $\mb{F}_p$ such that $M_1$, $M_2$, $M_1-M_2$, and $M_1+M_2$ are invertible and no pair of eigenvalues of $M_1M_2^{-1}$ (viewed over $\overline{\mb{F}}_p$) are negatives of each other. For any $\alpha,\epsilon > 0$, letting $G = \mb{F}_p^n$ for $n > n_0(\alpha,\epsilon,p)$, if $f\colon G^k\to[0,1]$ satisfies $\mb{E}_{X\in G^k}f(X)\ge\alpha$ then there are $\Omega_{\alpha,\epsilon,p}(p^{kn})$ values $D\in G^k$ such that
\[\mb{E}_{X\in G^k}f(X)f(X+M_1D)f(X+M_2D)f(X+(M_1+M_2)D)\ge\alpha^4-\epsilon.\]
\end{theorem}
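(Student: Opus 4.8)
The plan is to run the standard arithmetic-regularity argument, now equipped with the $k$-symmetrized machinery of Sections~3--4. First I would apply a change of variables $D\mapsto M_2 D$ (legitimate since $M_2$ is invertible), which replaces the pattern $\vec x,\vec x+M_1D,\vec x+M_2D,\vec x+(M_1+M_2)D$ by $\vec x,\vec x+JD,\vec x+D,\vec x+(I+J)D$ with $J=M_1M_2^{-1}$; reindexing, it suffices to count $\vec x,\vec x+D,\vec x+JD,\vec x+(I+J)D$, matching exactly the normalized pattern of \cref{thm:counting}. Note $J,I-J,I+J$ invertible and the spectral hypothesis on $J$ are inherited from the hypotheses on $M_1,M_2$. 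Then I would invoke \cref{arith-reg-lem} to write $f=f_{\mathrm{str}}+f_{\mathrm{sml}}+f_{\mathrm{psr}}$ with the growth functions $\omega_1,\omega_2$ and the parameter $\delta$ chosen at the very end, so that $f_{\mathrm{str}}=\mb E[f\mid\mf B]$ for a high-rank $k$-symmetrized quadratic factor $\mf B$ of bounded complexity, $\|f_{\mathrm{sml}}\|_2\le\delta$, and $\|f_{\mathrm{psr}}\|_{U^3(G^k)}$ as small as we like relative to the complexity.

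The second step is a ``counting lemma'' that replaces $f$ by $f_{\mathrm{str}}$ in the four-point count. Expanding $f=f_{\mathrm{str}}+f_{\mathrm{sml}}+f_{\mathrm{psr}}$ into $3^4$ terms, every term containing at least one $f_{\mathrm{psr}}$ is controlled by \cref{lem:gowers-inequality} (with $s=4$, applied to the four automorphisms $0,I,J,I+J$ of $G^k$, whose pairwise differences are automorphisms precisely because $I,J,I-J,I+J$ are invertible) and is therefore $O(\|f_{\mathrm{psr}}\|_{U^3(G^k)})=o(1)$; every remaining term containing an $f_{\mathrm{sml}}$ is $O(\|f_{\mathrm{sml}}\|_2^{1/2})$ or so by a crude Cauchy--Schwarz / Gowers--Cauchy--Schwarz bound, hence $O(\delta^{c})$. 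This reduces matters to analyzing $\mb E_{X,D}\,f_{\mathrm{str}}(X)f_{\mathrm{str}}(X+D)f_{\mathrm{str}}(X+JD)f_{\mathrm{str}}(X+(I+J)D)$. Since $f_{\mathrm{str}}$ is $\mf B$-measurable, write $f_{\mathrm{str}}(X)=F(\msf B(X))$ for a function $F$ on the (bounded-dimensional) space of atoms. Here \cref{thm:counting} is the engine: as $X,D$ range over $G^k$, the 4-tuple $(\msf B(X),\msf B(X+D),\msf B(X+JD),\msf B(X+(I+J)D))$ is $\epsilon'$-equidistributed (with $\epsilon'=p^{-r/2+O(\mathrm{poly}(d))}$, negligible once $\omega_1$ is large relative to $\omega_2$ and the complexity) over the explicit subspace $\Psi_J^{d_1}\times(\Lambda_J^\perp)^{d_2}\times(\Lambda_J'^\perp)^{d_3}$. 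So the structured count is, up to $o(1)$, the average of $F(a_1)F(a_2)F(a_3)F(a_4)$ over $(a_1,a_2,a_3,a_4)$ in that subspace.

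The third step is to extract a popular $D$ and a density bound of $\alpha^4$. The key positivity observation is that the equidistribution subspace is a \emph{diagonal-friendly} linear subspace: the ``diagonal'' $a_1=a_2=a_3=a_4$ lies in $\Psi_J$ (take $\vec a_i$ equal), in $\Lambda_J^\perp$, and in $\Lambda_J'^\perp$ — one checks the defining orthogonality relations in \cref{lem:f-eq} are satisfied by constant tuples — and moreover the subspace is closed under the coordinate permutation/substitution needed to run a Cauchy--Schwarz ``positivity'' argument in the style of Green--Tao. Concretely, I would write the structured four-point average as an inner product and apply Cauchy--Schwarz twice over the appropriate ``fibers'' of the projections $a\mapsto(a_1)$, etc., to bound it below by $(\mb E\,F)^4+o(1)=(\mb E f_{\mathrm{str}})^4+o(1)\ge(\alpha-\delta)^4+o(1)$; since $\mb E_X f_{\mathrm{str}}(X)=\mb E_X f(X)\ge\alpha$, this gives $\alpha^4-\epsilon$ after choosing $\delta,\omega_1,\omega_2,n_0$ in the right order. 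Finally, a standard averaging/Markov argument converts the lower bound on the average over $D$ into $\Omega_{\alpha,\epsilon,p}(p^{kn})$ individual good values of $D$ (discarding $D=0$ costs only $o(1)$), and undoing the change of variables $D\mapsto M_2D$ returns a popular difference for the original pattern. The main obstacle is the positivity step: one must verify that \cref{thm:counting}'s subspace genuinely supports the two-fold Cauchy--Schwarz that yields the exponent $4$ rather than something weaker — this is exactly where the spectral condition is used, since it is what forces the subspace (via the shape of $\Lambda_J,\Lambda_J'$ in \cref{lem:f-eq}, which in turn rests on \cref{lem:A-algebra}) to have the ``rectangular'' structure compatible with $\alpha^4$; without it the relevant subspace is smaller and the argument — correctly — breaks, consistent with \cref{thm:counterexample}.
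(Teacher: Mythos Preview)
Your outline follows the paper closely through the regularity decomposition and the use of \cref{thm:counting}, but there is a real gap in the positivity step. You average the four-point count over \emph{all} $D\in G^k$ and then assert that a double Cauchy--Schwarz over ``fibers'' of the equidistribution subspace $\Psi_J^{d_1}\times(\Lambda_J^\perp)^{d_2}\times(\Lambda_J'^\perp)^{d_3}$ yields $(\mb E f_{\on{str}})^4$. The coset structure that makes this work for the quadratic pieces --- namely that $(M^{(1)},M^{(2)},M^{(3)},M^{(4)})\in\Lambda_J^\perp$ iff $(M^{(1)},M^{(2)})+\Omega_J^\perp=(M^{(4)},M^{(3)})+\Omega_J^\perp$ (this is the paper's \cref{thm:lambda-structure}, and is exactly where the spectral condition enters) --- simply fails for $\Psi_J$. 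Indeed, $\Psi_J$ has dimension $2k$: once $(x_1,x_2)$ is fixed, $(x_4,x_3)$ is \emph{determined}, not free in a coset. Concretely, if $d_2=d_3=0$ the structured count collapses to $\mb E_{v,w}F(v)F(v+w)F(v+Jw)F(v+(I+J)w)$ over the bounded-dimensional space $(\mb F_p^k)^{d_1}$, and there is no reason this should be $\ge(\mb E F)^4$; the property ``diagonal lies in the subspace and the subspace is reversal-symmetric'' that you check is not enough.

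The paper resolves this by restricting to $D\in H$, the common kernel of the linear forms of $\mf B$. This forces $\msf B_1(X)=\msf B_1(X+D)=\msf B_1(X+JD)=\msf B_1(X+(I+J)D)$, so the linear part is on the diagonal and the Cauchy--Schwarz square trick goes through cleanly (see \cref{thm:str-count} and the calculation after \cref{thm:lambda-structure}). This restriction has a cost you did not anticipate: with the weight $\mbm 1_H(D)$ present you can no longer bound the $f_{\on{psr}}$ terms by a direct application of \cref{lem:gowers-inequality}. The paper handles this with the identity $\mbm 1_H(D)=\sum_{T\in H^\perp}\mbm 1_{T+H}(X)\mbm 1_{T+H}(X+D)$, which absorbs the indicator into the $1$-bounded functions at the expense of a factor $|H^\perp|=p^{kd_1}$, and then chooses $\omega_2$ large enough to beat that loss. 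Your plan is correct once you (i) insert the restriction to $D\in H$ before expanding into $81$ terms, and (ii) replace the direct Gowers bound on the $f_{\on{psr}}$ terms by this coset-splitting trick.
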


Given the previous developments the proof is similar to that of \cite[Theorem~4.1]{Gre06} modulo deriving the necessary positivity.

\begin{proof}[Proof of \cref{thm:main-functional}]
By replacing $M_1D$ by $D$ (note that $M_1$ is invertible) we can reduce the case of $(M_1,M_2)$ to the case of $(I,M_2M_1^{-1})$. Hence from now on we assume that $M_1 = I$ and $M_2 = J$. Applying \cref{arith-reg-lem}, we decompose 
\[f = f_{\on{str}}+f_{\on{sml}}+f_{\on{psr}}\]
where $f_{\on{str}} = \mb{E}[f|\mf{B}]$ for a $k$-symmetrized quadratic factor $\mf{B}$ of complexity $(d_1,d_2,d_3)$ and rank at least $\omega_1(d_1+d_2+d_3)$, where $\snorm{f_{\on{sml}}}_2\le \epsilon/250$, and where $\snorm{f_{\on{psr}}}_{U^3(G^k)}\le 1/\omega_2(d_1+d_2+d_3)$. The complexity $(d_1,d_2,d_3)$ is bounded in terms of the given parameters and growth functions. We define $H$ to be the subspace of $(\mb{F}_p^n)^k$ such that the linear factors of $\mf{B}$ are zero. We will prove that 
\begin{equation}\label{eq:key-average}
\mb{E}_{X,D\in G^k}[f(X)f(X+D)f(X+JD)f(X+(J+I)D)\mbm{1}_H(D)]\ge p^{-kd_1} (\alpha^4-\epsilon).
\end{equation}
This suffices to prove the result as we are summing over a density $p^{-kd_1}$ subset of the differences $D$ and hence at least one difference achieves the necessary bound. Furthermore, by Markov's inequality, a fraction of at least $\Omega_{\alpha,\epsilon}(1)$ of the differences in $H$ satisfy the weaker bound of $\alpha^4-2\epsilon$. Adjusting $\epsilon$ appropriately will prove the desired upon noting that a positive fraction of $G^k$ lies in $H$ due to the bounded complexity of $\mf{B}$.

Now we focus attention on \cref{eq:key-average}. Expanding $f = f_{\on{str}}+f_{\on{sml}}+f_{\on{psr}}$ allows us to turn the left side into $81$ terms of the form 
\[\mb{E}_{X,D\in G^k}[f_1(X)f_2(X+D)f_3(X+JD)f_4(X+(J+I)D)\mathbbm{1}_H(D)]\]
where $f_1,f_2,f_3,f_3\in\{f_{\on{str}},f_{\on{sml}},f_{\on{psr}}\}$. If $f_{\on{sml}}$ appears in the expression, we have that 
\begin{align*}
|\mb{E}_{X,D}[f_1(X)f_2(X+D)&f_3(X+JD)f_4(X+(J+I)D)\mathbbm{1}_H(D)]|\\
&\le\mb{E}_X[|f_{\on{sml}}(X)|\mathbbm{1}_H(D)] \le p^{-kd_1}(\mb{E}_X[f_{\on{sml}}(X)^2])^{1/2}\le p^{-kd_1}\epsilon/250.
\end{align*}
This bounds the 65 terms that include $f_{\on{sml}}$.

Next we bound the terms that include $f_{\on{psr}}$. Say $f_3=f_{\on{psr}}$ (the other cases are analogous). Note that we have 
\[\mbm{1}_H(D) = \sum_{T\in H^{\perp}}\mbm{1}_{T+H}(X+D)\mbm{1}_{T+H}(X)\]
for all $X$. Therefore 
\begin{align*}
|\mb{E}[f_1(X)&f_2(X+D)f_{\on{psr}}(X+JD)f_4(X+(I+J)D)\mathbbm{1}_H(D)]|\\
&=\left|\sum_{T\in H^{\perp}}\mb{E}[f_1(X)f_2(X+D)f_{\on{psr}}(X+JD)f_4(X+(I+J)D)\mbm{1}_{T+H}(X+D)\mbm{1}_{T+H}(X)]\right|\\
&\le |H^{\perp}|\snorm{f_{\on{psr}}}_{U^3(G^k)}
\end{align*}
where in the final line we have used \cref{lem:gowers-inequality} (since $I,J,I+J,I-J$ are invertible). Taking the growth function $\omega_2$ to be a sufficiently fast growing exponential (depending on $p,k$), we can ensure that $\|f_{\on{psr}}\|_{U^3(G^k)}\leq p^{-2kd_1}\epsilon/250$.

Thus it suffices to prove
\begin{equation}\label{eq:count-inequality}
\mb{E}[f_{\on{str}}(X)f_{\on{str}}(X+D)f_{\on{str}}(X+JD)f_{\on{str}}(X+(J+I)D)\mathbbm{1}_H(D)]\ge p^{-kd_1}(\alpha^4-\epsilon/2).
\end{equation}

Since $f_{\on{str}} = \mb{E}[f|\mf{B}]$, we see that $f_{\on{str}}$ is $\mf{B}$-measurable and $[0,1]$-valued. Recall that a factor $\mf{B}$ defines a $\sB\colon G^k = (\mb{F}_p^n)^k \to (\mb{F}_p^k)^{d_1}\times(\mc{S}_k)^{d_2}\times (\mc{S}'_k)^{d_3}$. Hence there is an associated function $\mbf{f}\colon (\mb{F}_p^k)^{d_1}\times(\mc{S}_k)^{d_2}\times (\mc{S}'_k)^{d_3}\to[0,1]$ such that $f_{\on{str}}(X) = \mbf{f}(\msf{B}(X))$.

\begin{claim}
\label{thm:str-count}
\[\mb{E}[f_{\on{str}}(X)f_{\on{str}}(X+D)f_{\on{str}}(X+JD)f_{\on{str}}(X+(J+I)D)\mathbbm{1}_H(D)]\]
and
\[p^{-kd_1}\mb{E}_{\vec v\in\F_p^k,\vec M_2\in(\Lambda_J^\perp)^{d_2},\vec M_3\in(\Lambda_J'^\perp)^{d_3}} [\mbf{f}(\vec{v},M_2^{(1)},M_3^{(1)})\mbf{f}(\vec{v},M_2^{(2)},M_3^{(2)})\mbf{f}(\vec{v},M_2^{(3)},M_3^{(3)})\mbf{f}(\vec{v},M_2^{(4)},M_3^{(4)})]\]
are equal to up to a multiplicative factor of $1+O(p^{-r/2+2kd_1+(2\binom{k+1}{2}+k^2)d_2+(2\binom{k}{2}+k^2)d_3})$ multiplicative factor, where the rank $r$ is the rank of $\fB$.
\end{claim}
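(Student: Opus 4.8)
\textbf{Proof plan for \cref{thm:str-count}.}
The plan is to apply the equidistribution result of \cref{thm:counting} directly. Recall that $f_{\on{str}}(X) = \mbf{f}(\msf{B}(X))$, so the left-hand average can be rewritten purely in terms of the distribution of the $4$-tuple
\[(\msf{B}(X),\msf{B}(X+D),\msf{B}(X+JD),\msf{B}(X+(I+J)D))\]
together with the indicator $\mbm{1}_H(D)$. The first observation is that $\mbm{1}_H(D) = 1$ exactly when $\msf{B}_{1,i}(D) = 0$ for all $i\in[d_1]$, i.e. when the first-coordinate data of $D$ vanishes; so conditioning on $\mbm{1}_H(D)$ is the same as restricting the $\msf{B}_1$-part of the tuple to the appropriate sub-locus. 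I would therefore first record what $\Psi_J$ becomes once we additionally impose $x_2 - x_1 = \msf{B}_{1,i}(D) = 0$: from the defining equations $x_1 - x_2 - x_3 + x_4 = 0$ and $x_4 - x_2 = J(x_2-x_1)$ we get $x_1 = x_2 = x_3 = x_4$, so each of the $d_1$ linear blocks is forced to be a constant $\vec v \in \mb{F}_p^k$, ranging uniformly. This is precisely the $p^{-kd_1}$ prefactor and the shared first argument $\vec v$ in all four copies of $\mbf{f}$ on the right-hand side.

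Next I would handle the quadratic and skew-symmetric blocks. By \cref{thm:counting}, conditionally on any admissible value of the linear part, each $\msf{B}_{2,i}$-block $(\msf{B}_{2,i}(X),\msf{B}_{2,i}(X+D),\msf{B}_{2,i}(X+JD),\msf{B}_{2,i}(X+(I+J)D))$ is equidistributed on $\Lambda_J^\perp$ and each $\msf{B}_{3,i}$-block on $\Lambda_J'^\perp$, all blocks jointly independent, with the stated multiplicative error $1 + O(p^{-r/2 + 2kd_1 + (2\binom{k+1}{2}+k^2)d_2 + (2\binom{k}{2}+k^2)d_3})$. (Strictly, \cref{thm:counting} gives equidistribution of the full tuple; I would note that restricting to the sublocus $\{\mbm{1}_H(D)=1\}$ only shrinks the index set and multiplies the probabilities by a bounded power of $p$, which is already absorbed into the error exponent as written, since that exponent dominates the number of linear constraints being imposed.) Consequently the left-hand average equals
\[\Big(\tfrac{1}{p^{kd_1}} + O(p^{-r/2+\cdots})\Big)\ \mb{E}_{\vec v,\ \vec M_2\in(\Lambda_J^\perp)^{d_2},\ \vec M_3\in(\Lambda_J'^\perp)^{d_3}}\Big[\textstyle\prod_{j=1}^4 \mbf{f}(\vec v, M_2^{(j)}, M_3^{(j)})\Big],\]
where $M_2^{(j)}, M_3^{(j)}$ denote the $j$-th coordinate projections of the tuples in $(\Lambda_J^\perp)^{d_2}$ and $(\Lambda_J'^\perp)^{d_3}$ respectively. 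Since $\mbf{f}$ takes values in $[0,1]$, the expectation factor lies in $[0,1]$, so the additive $O(p^{-r/2+\cdots})$ error can be converted into the claimed multiplicative error $1 + O(p^{-r/2+\cdots})$ on the product of the two displayed quantities.

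The only genuinely delicate point is bookkeeping: I must check that the change of variables $(X,D)\mapsto$ (our $4$-tuple) together with the constraint $\mbm{1}_H(D)=1$ really does send the uniform distribution on $\{(X,D): D\in H\}$ to the uniform distribution (up to the error) on $\{\vec v\}\times(\Lambda_J^\perp)^{d_2}\times(\Lambda_J'^\perp)^{d_3}$, rather than onto some coset or proper subvariety thereof. This is exactly the content of the "spanning" half of \cref{lem:f-eq} and the explicit spanning computation for $\Psi_J$ at the end of the proof of \cref{thm:counting}: the vectors $(XM_iX^\TT, (X+D)M_i(X+D)^\TT,\dots)$ span all of $\Lambda_J^\perp$, so the image is the full space and not a coset. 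Given that, the rest is routine, and I would simply invoke \cref{thm:counting} with the observation above about the linear restriction. I expect no further obstacle beyond tracking the exponent in the error term, which is inherited verbatim from \cref{thm:counting}.
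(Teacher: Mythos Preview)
Your proposal is correct and follows essentially the same approach as the paper: both arguments apply \cref{thm:counting}, observe that the restriction $D\in H$ forces the $\msf{B}_1$-components of the four points to coincide (you via the defining equations of $\Psi_J$ together with $x_2-x_1=0$; the paper via the equivalent observation that $D\in H$ implies $JD,(I+J)D\in H$), and then read off the equidistribution of the quadratic blocks on $(\Lambda_J^\perp)^{d_2}\times(\Lambda_J'^\perp)^{d_3}$. One cosmetic point: the error in \cref{thm:counting} is already multiplicative, so your intermediate display with $\bigl(p^{-kd_1}+O(p^{-r/2+\cdots})\bigr)$ and the subsequent ``conversion'' step are unnecessary---you can carry the $1+O(\cdot)$ factor through directly.
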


\begin{proof}
Restricting $D$ to lie in $H$ we need to understand the equidistribution of $(\msf{B}(X),\msf{B}(X+D),\msf{B}(X+JD),\msf{B}(X+(I+J)D))$. We apply \cref{thm:counting}. We have that $(\msf{B}_1(X),\msf{B}_1(X+D),\msf{B}_1(X+JD),\msf{B}_1(X+(I+J)D))$ equidistributes over $(\vec{v},\vec{v},\vec{v},\vec{v})$ with $\vec{v}\in(\mb{F}_p^k)^{d_1}$, since looking at $D\in H$ corresponds precisely to looking at atoms for which linear factors in $\mf{B}$ for $\msf{B}(X),\msf{B}(X+D)$ are equal. (We are also implicitly using that $D\in H$ implies each row of $D$ is in the orthogonal space of the defining vectors $\{r_1,\ldots,r_{d_1}\}$ of the linear factors of $\mf{B}$, which implies $JD\in H$ as well.)

Furthermore
\[(M_2^{(1)},M_2^{(2)},M_2^{(3)},M_2^{(4)}) = (\msf{B}_2(X),\msf{B}_2(X+D),\msf{B}_2(X+JD),\msf{B}_2(X+(I+J)D))\] equidistributes over $(\Lambda_J^\perp)^{d_2}$ and finally \[(M_3^{(1)},M_3^{(2)},M_3^{(3)},M_3^{(4)}) = (\msf{B}_3(X),\msf{B}_3(X+D),\msf{B}_3(X+JD),\msf{B}_3(X+(I+J)D))\]
equidistributes over $(\Lambda_J'^\perp)^{d_3}$ with each of the components distributing independently. Therefore, even upon restricting the image of $\msf{B}_1$ to be zero, we have equidistribution over the remaining space.
\end{proof}

It remains to study the expectation in \cref{thm:str-count}. Recalling the definitions of $\Xi_J, \Lambda_J, \Lambda_J'$ from \cref{sec:equidistribution}, we define the following subspaces:
\begin{align*}
\Omega_J &= \{(-A,-A(I+J)(I-J)^{-1})\colon A^\TT = +A, A\in\Xi_J\},\\
\Omega_J' &= \{(-A,-A(I+J)(I-J)^{-1})\colon A^\TT = -A, A\in\Xi_J\}.
\end{align*}

\begin{claim}
\label{thm:lambda-structure}
Given $(M^{(1)},M^{(2)},M^{(3)},M^{(4)})\in\mc{S}_k^4$, we have $(M^{(1)},M^{(2)},M^{(3)},M^{(4)})\in\Lambda_J^\perp$ if and only if we have the equality of cosets \[(M^{(1)},M^{(2)})+\Omega_J^\perp=(M^{(4)},M^{(3)})+\Omega_J^\perp.\]
Similarly given $(M^{(1)},M^{(2)},M^{(3)},M^{(4)})\in\mc{S}_k'^4$, we have $(M^{(1)},M^{(2)},M^{(3)},M^{(4)})\in\Lambda_J'^\perp$ if and only if we have the equality of cosets \[(M^{(1)},M^{(2)})+\Omega_J'^\perp=(M^{(4)},M^{(3)})+\Omega_J'^\perp.\]
\end{claim}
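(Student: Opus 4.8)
\textbf{Proof proposal for \cref{thm:lambda-structure}.}

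The plan is to prove both equivalences purely by unwinding the definitions of the relevant subspaces and orthogonal complements, together with the bilinearity of the Hilbert--Schmidt inner product; the two cases (symmetric, skew-symmetric) are formally identical, so I would prove the first and remark that the second follows verbatim. First I would record the key structural observation: the subspace $\Lambda_J$ (resp.\ $\Lambda_J'$) is exactly the image of the subspace $\Omega_J$ (resp.\ $\Omega_J'$) of $(\F_p^{k\times k})^2$ under the injective linear map $\iota\colon (B_1,B_2)\mapsto (B_1,B_2,-B_2,-B_1)$. Indeed, from the definitions, $(-A,-A(I+J)(I-J)^{-1},A(I+J)(I-J)^{-1},A)$ is precisely $\iota$ applied to $(-A,-A(I+J)(I-J)^{-1})\in\Omega_J$, and the last two coordinates are the negatives of the first two in reversed order. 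So $\Lambda_J = \iota(\Omega_J)$, living inside $(\mc S_k)^4$ once we intersect with $\Xi_J$ appropriately (the constraint $A\in\Xi_J$ is already baked into both $\Omega_J$ and $\Lambda_J$).

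Next I would translate the orthogonality condition through $\iota$. For a tuple $(M^{(1)},M^{(2)},M^{(3)},M^{(4)})\in(\mc S_k)^4$ and an element $(B_1,B_2)\in\Omega_J$, we have
\[\sang{(M^{(1)},M^{(2)},M^{(3)},M^{(4)}),\,\iota(B_1,B_2)} = \sang{M^{(1)},B_1}+\sang{M^{(2)},B_2}-\sang{M^{(3)},B_2}-\sang{M^{(4)},B_1},\]
which rearranges to $\sang{M^{(1)}-M^{(4)},B_1}+\sang{M^{(2)}-M^{(3)},B_2} = \sang{(M^{(1)}-M^{(4)},M^{(2)}-M^{(3)}),(B_1,B_2)}$. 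Hence $(M^{(1)},M^{(2)},M^{(3)},M^{(4)})\perp\Lambda_J$ if and only if $(M^{(1)}-M^{(4)},M^{(2)}-M^{(3)})\perp\Omega_J$ (as $(B_1,B_2)$ ranges over $\Omega_J$), i.e.\ $(M^{(1)}-M^{(4)},M^{(2)}-M^{(3)})\in\Omega_J^\perp$; and this in turn is exactly the statement that $(M^{(1)},M^{(2)})$ and $(M^{(4)},M^{(3)})$ lie in the same coset of $\Omega_J^\perp$ inside $(\mc S_k)^2$. One small point to handle carefully: the orthogonal complement $\Lambda_J^\perp$ here means $\Lambda_J^\perp\cap(\mc S_k)^4$ (per the notational convention set just before \cref{thm:counting}), and likewise $\Omega_J^\perp$ should be interpreted inside $(\mc S_k)^2$; since our test tuple $(M^{(1)},M^{(2)},M^{(3)},M^{(4)})$ is assumed symmetric, the differences $M^{(1)}-M^{(4)}$ and $M^{(2)}-M^{(3)}$ are symmetric and everything stays inside the symmetric world, so no mismatch arises. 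The skew case is identical after replacing $\mc S_k$ by $\mc S_k'$, $\Omega_J$ by $\Omega_J'$, and $\Lambda_J$ by $\Lambda_J'$, with the same map $\iota$.

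I do not expect a genuine obstacle here — this is essentially a bookkeeping lemma — but the one place to be attentive is the coordinate permutation: the coset equality in the claim pairs $(M^{(1)},M^{(2)})$ against $(M^{(4)},M^{(3)})$ (note the swap of indices $3$ and $4$), and this swap is forced precisely by the reversed order in the last two slots of $\Lambda_J$'s defining tuple. Getting that indexing right, and confirming that $\iota$ is injective with $\iota(\Omega_J)=\Lambda_J$ (not merely $\subseteq$), are the only things that need care; both are immediate from inspection of the definitions. I would close by noting the reverse implication of the coset statement is just the same computation read backwards, so the proof is complete.
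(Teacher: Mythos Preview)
Your proposal is correct and is exactly the unwinding the paper has in mind: the paper's own ``proof'' is the single sentence ``This claim follows by inspection of the definitions of $\Lambda_J$ and $\Omega_J$,'' and your argument via the map $\iota\colon(B_1,B_2)\mapsto(B_1,B_2,-B_2,-B_1)$ is precisely that inspection made explicit. The index swap $(M^{(4)},M^{(3)})$ you flag is indeed the only point requiring attention, and you handle it correctly.
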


This claim follows by inspection of the definitions of $\Lambda_J$ and $\Omega_J$. From this we deduce
\begin{align*}
\mb{E}[\mbf{f}(\vec{v},M_2^{(1)},&M_3^{(1)})\mbf{f}(\vec{v},M_2^{(2)},M_3^{(2)})\mbf{f}(\vec{v},M_2^{(3)},M_3^{(3)})\mbf{f}(\vec{v},M_2^{(4)},M_3^{(4)})]\\
&= \mb{E}_{\vec{v}}\mb{E}_{\tau}\left(\mb{E}_{ \big((M_2^{(1)},M_2^{(2)})+\Omega_J^{\perp},(M_3^{(1)},M_3^{(2)})+\Omega_J'^{\perp}\big)=\tau} \mbf{f}(\vec{v},M_2^{(1)},M_3^{(1)})\mbf{f}(\vec{v},M_2^{(2)},M_3^{(2)})\right)^2\\
&\ge\left(\mb{E}_{\vec{v}}\mb{E}_\tau\mb{E}_{ \big((M_2^{(1)},M_2^{(2)})+\Omega_J^{\perp},(M_3^{(1)},M_3^{(2)})+\Omega_J'^{\perp}\big)=\tau} \mbf{f}(\vec{v},M_2^{(1)},M_3^{(1)})\mbf{f}(\vec{v},M_2^{(2)},M_3^{(2)})\right)^2\\
&= \left(\mb{E}_{\vec{v}}\mb{E}_{(M_2^{(1)},M_2^{(2)},M_3^{(1)},M_3^{(2)})} \mbf{f}(\vec{v},M_2^{(1)},M_3^{(1)})\mbf{f}(\vec{v},M_2^{(2)},M_3^{(2)})\right)^2\\
&= \mb{E}_{\vec{v}}\left(\mb{E}_{(M_2^{(1)},M_3^{(1)})} \mbf{f}(\vec{v},M_2^{(1)},M_3^{(1)})^2\right)^2\\
&\ge \left(\mb{E}_{\vec{v},(M_2^{(1)},M_3^{(1)})} \mbf{f}(\vec{v},M_2^{(1)},M_3^{(1)})\right)^4
\end{align*}
where we have used the Cauchy--Schwarz inequality twice. To finish note that by the equidistibution derived in \cref{prop:factor-equidistribution} we have that $\mb{E}_{\vec{v},(M_2^{(1)},M_2^{(2)})} \mbf{f}(\vec{v},M_2^{(1)},M_3^{(1)})$ is $\mb{E}[f_{\on{str}}]$ up to a multiplicative factor of $1+O(p^{-r/2+kd_1+\binom{k+1}{2}d_2+\binom{k}{2}d_3})$. Finally since $\mb{E}[f_{\on{str}}] = \mb{E}[\mb{E}[f|\mf{B}]] = \alpha$ the desired result follows upon taking the growth function $\omega_2$ (and thus $r$, the rank of $\fB$) to be large enough.
\end{proof}

\begin{remark}
The key reason that this argument works is a ``positivity'' result in the final expectation. This ``positivity'' occurs from the symmetry of $\Lambda_J^{\perp}$ which ultimately is derived from the spectral condition on $J$. In the next section we consider what occurs when the spectral condition on $J$ is no longer satisfied. In essence, $\Lambda_J^{\perp}$ will be one dimension larger than otherwise in such a way that it no longer has this symmetry property.
\end{remark}

\section{Counterexample to popular differences for rotated squares in \texorpdfstring{$\mb{F}_5^n$}{F5n}}\label{sec:counterexample}
Given the results of the last section it is natural to ask whether the popular difference result holds for all matrix patterns which are controlled by the $U^3$-norm. We now prove that this is false and demonstrate \cref{thm:counterexample}.

As is standard, it suffices to give a construction for a set of fixed positive density, as smaller sets will follow from subsampling and simple concentration facts. In fact, we will merely give a function in $[0,1]$ (which one can scale down appropriately and sample from). The counterexample proceeds in stages. In the first stage we give a construction which rules out all sufficiently generic differences (i.e., $(a,b)$ such that $\on{span}_{\mb{F}_5}\{a,b\}$ is $2$-dimensional), using the failure of the key positivity in \cref{sec:equidistribution}. We then modify the function to rule out the non-generic directions using techniques from earlier work of the second and third authors with Zhao in \cite[Sections~2,~3]{SSZ20} (which deals with, for instance, axis-aligned squares) with \cite[Section~3]{SSZ20} itself building on a construction of Mandache \cite{Man18}. Finally, we expect that the construction here can be used to disprove the ergodic analogue \cite[Question~1.11]{ABB21} but we do not pursue this direction here.

First, if $\Gamma$ is the linear automorphism of $(\mb{F}_5^n)^2$ defined by $(x,y)\mapsto(x-2y,x+2y)$ then by replacing $A$ by $\Gamma^{-1}A$ and reparametrizing the pattern we are counting, it suffices to consider the ``diagonalized'' pattern
\[(x,y),(x+a,y+b),(x+2a,y-2b),(x+3a,y-b).\]
(Note that this is two arithmetic progressions in the coordinates with a twist, though we will not make use of this fact)

\subsection{Initial construction}\label{sub:initial-construction}
Let $f_1\colon (\mb{F}_5^n)^2\to[0,1]$ be defined by
\[f_1(x,y) = g_1(x\cdot x, x\cdot y),\]
where $g_1\colon \mb{F}_5^2\to[0,1]$ is to be chosen later. With this choice, the pattern count for $f$ depends only on $g_1$ and the distribution of 8-tuples of the form
\begin{equation*}
(x\cdot x, x\cdot y, \ldots, (x+3a)\cdot(x+3a), (x+3a)\cdot(y-b)).
\end{equation*}

Let $\Lambda_2'\le\mb{F}_5^8$ be the space orthogonal to the vectors
\[(1,0,-1,0,-1,0,1,0),\quad(0,1,0,-1,0,-1,0,1),\quad(1,0,-3,0,3,0,-1,0)\]
We claim that these 8-tuples equidistribute over $\Lambda_2'$.

\begin{proposition}\label{prop:counterexample-equidistribution}
Fix nonzero $a,b\in\mb{F}_5^n$ that are not multiples of each other. Then
\begin{align*}(x\cdot x, x\cdot y, (x+a)\cdot(x+a), &(x+a)\cdot(y+b), (x+2a)\cdot(x+2a),\\
&(x+2a)\cdot(y-2b), (x+3a)\cdot(x+3a), (x+3a)\cdot(y-b))\end{align*}
obtains every value in $(0,0,a\cdot a, a\cdot b,4a\cdot a,-4a\cdot b,9a\cdot a,-3a\cdot b)+\Lambda_2'$ with probability
\[5^{-5} + O(5^{-n/2})\]
\end{proposition}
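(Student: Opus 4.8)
The plan is to reduce this to \cref{prop:equidistribution} (Green's equidistribution lemma over $\mb{F}_p^n$) by the same mechanism already used in the proofs of \cref{prop:factor-equidistribution} and \cref{thm:counting}: identify the $8$-tuple as the image of a ``high-rank abstract'' tuple of quadratic forms in the $2n$ variables $(x,y)$ under a fixed linear map, and then apply \cref{lem:projection-equidistribution}. Concretely, every entry $(x+ja)\cdot(x+ja)$, $(x+ja)\cdot(y-cb)$ expands into an $\mb{F}_5$-linear combination of the six quantities $x\cdot x$, $x\cdot y$, $x\cdot a$, $y\cdot a$, $a\cdot a$, $a\cdot b$, $x\cdot b$ (with $a\cdot a, a\cdot b$ acting as the constant shift in the statement). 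So the first step is to write out these expansions and record the $8\times 6$ ``structure matrix'' $L$ over $\mb{F}_5$ sending $(x\cdot x,\, x\cdot y,\, x\cdot a,\, x\cdot b,\, y\cdot a,\, \text{const})$-type data to the $8$-tuple; the affine shift $(0,0,a\cdot a,a\cdot b,4a\cdot a,-4a\cdot b,9a\cdot a,-3a\cdot b)$ is exactly the part not depending on $(x,y)$.

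Next I would establish the equidistribution of the underlying ``free'' tuple. The relevant quadratic/linear forms in the $2n$ variables $(x,y)\in(\mb{F}_5^n)^2$ are: the quadratic forms $x\cdot x$ and $x\cdot y$, and the linear forms $x\cdot a$, $y\cdot a$, $x\cdot b$ (here $a,b$ are fixed). Since $a,b$ are nonzero and not multiples of each other, the three linear functionals $x\mapsto x\cdot a$, $y\mapsto y\cdot a$, $x\mapsto x\cdot b$ are linearly independent; and the two quadratic forms $x\cdot x$, $x\cdot y$ in the $2n$ variables are ``high rank'' in the sense of \cref{prop:equidistribution} — any nontrivial combination $\lambda_1 (x\cdot x)+\lambda_2(x\cdot y)$ has rank $\Omega(n)$ — and moreover they remain high rank after being restricted to compensate for the linear conditions (one checks the quadratic forms restricted to the codimension-$O(1)$ subspace cut out by the linear forms still have rank $\Omega(n)$; this is where an $O(5^{-n/2})$ error, rather than something worse, comes from). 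Thus \cref{prop:equidistribution} gives that the $5$-tuple $(x\cdot x,\, x\cdot y,\, x\cdot a,\, y\cdot a,\, x\cdot b)$ is equidistributed over $\mb{F}_5^5$ with error $O(5^{-n/2})$ — this is the ``$5^{-5}$'' in the statement. (One should double-check the count of free parameters: two quadratic + three linear $=5$ coordinates.)

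Then I would push this equidistributed $5$-tuple forward through the fixed linear map $L$ and invoke \cref{lem:projection-equidistribution}: the image is equidistributed (with the same multiplicative error, hence the same additive $O(5^{-n/2})$ after absorbing the bounded constants) on the affine subspace $L(\mb{F}_5^5) + (\text{shift})$. The final bookkeeping step is to verify that $L(\mb{F}_5^5)$ is exactly the linear space $\Lambda_2'$ defined in the statement — equivalently, that $\Lambda_2'^\perp$ is spanned by the three listed covectors $(1,0,-1,0,-1,0,1,0)$, $(0,1,0,-1,0,-1,0,1)$, $(1,0,-3,0,3,0,-1,0)$. This amounts to checking (a) each listed covector annihilates every column of $L$ — i.e., each gives a genuine linear relation among the $8$ expanded entries, which one reads off directly from the expansions (the first two are the ``second difference vanishes'' relations for the $x\cdot x$-type and the $xy$-type entries separately, the third is a cubic-type relation coming from the fact that the entries are quadratic in $j\in\{0,1,2,3\}$ while we have $4$ sample points), and (b) a dimension count: $\dim \Lambda_2' = 8-3 = 5 = \dim L(\mb{F}_5^5)$, provided $L$ is injective, which follows since the listed relations are the only ones.

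The main obstacle is the high-rank verification for the two quadratic forms after imposing the linear constraints: one must argue that $\lambda_1(x\cdot x)+\lambda_2(x\cdot y)$ restricted to the subspace $\{x\cdot a = y\cdot a = x\cdot b = 0\}$ still has rank $\Omega(n)$ for all $(\lambda_1,\lambda_2)\ne 0$, uniformly in the fixed vectors $a,b$ (using only that they are linearly independent). This is the same phenomenon packaged in \cref{prop:factor-equidistribution}/\cref{prop:abstract-atom-equidistribution}, so I would mirror that argument: absorb the linear forms by passing to the bilinear form on the $2n$-dimensional space, note that clearing out $O(1)$ coordinates changes the rank by $O(1)$, and conclude. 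Everything else — expanding the entries, writing down $L$, computing $\Lambda_2'^\perp$, and the final application of \cref{lem:projection-equidistribution} — is routine linear algebra over $\mb{F}_5$.
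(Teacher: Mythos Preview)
Your proposal is correct and follows essentially the same route as the paper: apply \cref{prop:equidistribution} to the $2n$-variable point $(x,y)$ with the two quadratic forms $x\cdot x,\ x\cdot y$ and the three linear forms $x\cdot a,\ x\cdot b,\ y\cdot a$, obtain equidistribution of this $5$-tuple on $\mb{F}_5^5$, and then push forward through the fixed affine map to the $8$-tuple (the paper phrases this last step as ``solving'' rather than invoking \cref{lem:projection-equidistribution}, but it is the same thing). One small remark: the extra verification you flag as the ``main obstacle''---that the quadratic forms remain high rank after restricting to the subspace cut out by the linear conditions---is not actually needed, since \cref{prop:equidistribution} as stated treats the linear and quadratic data jointly and only asks that the $r_i$ be independent and that nontrivial combinations of the $M_i$ have high rank as forms on the full space.
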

\begin{remark}
Note that $(0,0,0,1,0,-4,0,-3)\in\Lambda_2'$, so it actually equidistributes in $(0,0,a\cdot a, 0,4a\cdot a,0,9a\cdot a,0)+\Lambda_2'$.
\end{remark}
\begin{proof}
This is a hands-on computation of equidistribution. Apply  \cref{prop:equidistribution} to the concatenation $x' = (x,y)$ with linear forms $x \cdot a$, $x \cdot b$, $a \cdot y$ and quadratic forms $x \cdot x$, $x \cdot y$ (the last of which can be written as a symmetric matrix when $p \neq 2$). We conclude that the image of $(x,y)$ under this 5-tuple of forms obtains each point in $\F_5^5$ with probability $5^{-5}+O(5^{-n/2}).$ Given these five values one can solve for all values in the 8-tuple, and solving we obtain equidistribution of the 8-tuple over the appropriate 5-dimensional subspace $\Lambda_2'$ as desired.
\end{proof}
Consequently, if $a,b\in\mb{F}_5^n$ are nonzero and not multiples of each other,
\begin{align}
\beta_1&(a,b) := \mb{E}f_1(x,y)f_1(x+a,y+b)f_1(x+2a,y-2b)f_1(x+3a,y-b)\notag\\
&= \mb{E}g_1(x\cdot x,x\cdot y)g_1((x+a)\cdot(x+a),(x+a)\cdot(y+b))g_1((x+2a)\cdot(x+2a),(x+2a)\cdot(y-2b))\notag\\
&\qquad\qquad g_1((x+3a)\cdot(x+3a),(x+3a)\cdot(y-b))\notag\\
&= \mb{E}_{v\in\Lambda_2'}g_1(v_1,v_2)g_1(v_3+a\cdot a,v_4)g_1(v_5+4a\cdot a,v_6)g_1(v_7+9a\cdot a,v_8) + O(5^{-n/2}),\label{eq:counterexample-g}
\end{align}
using the remark after \cref{prop:counterexample-equidistribution} in the last line.

Let $g_1(x,y) = \mbm{1}_S(x,y)$ be the indicator of the set
\[S = \{(0,2),(0,3),(0,4),(1,0),(1,3),(1,4),(2,1),(2,2),(3,0),(3,1)\}.\]
Then a direct verification proves that 
\[\sup_{a\cdot a\in \mb{F}_5}\mb{E}_{v\in\Lambda_2'}g_1(v_1,v_2)g_1(v_3+a\cdot a,v_4)g_1(v_5+4a\cdot a,v_6)g_1(v_7+9a\cdot a,v_8) = \frac{73}{5^5}\]
while clearly
\[\alpha_1 = \mb{E}f_1(x,y) = \mb{E}_{v_1,v_2\in\mb{F}_5}g_1(v_1,v_2) + O(5^{-n/2}) = \frac{2}{5} + O(5^{-n/2})\]
by \cref{prop:counterexample-equidistribution}. (Code verifying this explicit finite computation is attached on the arXiv.)

If we merely wish to establish \cref{thm:counterexample} for all but $O(\sqrt{|G|})$ directions, where $G = (\mb{F}_5^n)^2$, then we are done due to $73/5^5 < (2/5)^4$. Now we introduce further constructions which allow us to fix the directions where $a,b$ are nontrivially related.

\subsection{Fixing most special directions}\label{sub:fixing-construction}
We next proceed with a modification of a construction which appears in \cite{Man18} and was extended in \cite{SSZ20}. Let $X_x,Y_y,Z_z,X_x',Y_y',Z_z'$ for $x,y,z\in\mb{F}_5^n$ be independent random variables uniform on $[0,1]$. Let $F_2, F_3\colon (\mb{F}_5^n)^2\to[0,1]$ be defined by
\[F_2(x,y) = g_2(X_{-x-y},Y_{-2x+2y},Z_{2x+y}),\quad F_3(x,y) = g_2(X_{-x-2y}',Y_{-2x-y}',Z_{2x+2y}')\]
where $g_2\colon [0,1]^3\to[0,1]$ is a function to be chosen later. This is a random function. Let $h(x,y) = f_1(x,y)F_2(x,y)F_3(x,y)$, which is also a random function. Let
\[\alpha_h = \mb{E}_{x,y\in\mb{F}_5^n}h(x,y)\]
and let
\[\beta_h(a,b) = \mb{E}_{x,y\in\mb{F}_5^n}h(x,y)h(x+a,y+b)h(x+2a,y-2b)h(x+3a,y-b),\]
which are both random in the $\mbf{X},\mbf{Y},\mbf{Z},\mbf{X}',\mbf{Y}',\mbf{Z}'$ variables. We have
\begin{align*}
\alpha_2 &= \mb{E}_{\mbf{X},\mbf{Y},\mbf{Z}}\alpha_h = \mb{E}_{x,y\in\mb{F}_5^n}\mb{E}_{\substack{\mbf{X},\mbf{Y},\mbf{Z}\\\mbf{X}',\mbf{Y}',\mbf{Z}'}}f_1(x,y)g_2(X_{-x-y},Y_{-2x+2y},Z_{2x+y})g_2(X_{-x-y}',Y_{-2x+2y}',Z_{2x+y}')\\
&= \mb{E}_{x,y\in\mb{F}_5^n}f_1(x,y)\cdot(\mb{E}_{u,v,w\in[0,1]}g_2(u,v,w))^2
\end{align*}
since $\mbf{X},\mbf{Y},\mbf{Z}$ are independent, etc. We also have
\begin{align*}
\beta_2(a,b) &= \mb{E}_{\mbf{X},\mbf{Y},\mbf{Z}}\beta_h(a,b)\\
&= \mb{E}_{x,y\in\mb{F}_5^n}\mb{E}_{\substack{\mbf{X},\mbf{Y},\mbf{Z}\\\mbf{X}',\mbf{Y}',\mbf{Z}'}}f_1(x,y)f_1(x+a,y+b)f_1(x+2a,y-2b)f_1(x+3a,y-b)\\
&\qquad g_2(X_{-x-y},Y_{-2x+2y},Z_{2x+y})g_2(X_{-x-y-a-b},Y_{-2x+2y-2a+2b},Z_{2x+y+2a+b})\\
&\qquad g_2(X_{-x-y-2a+2b},Y_{-2x+2y+a+b},Z_{2x+y-a-2b})g_2(X_{-x-y+2a+b},Y_{-2x+2y-a-2b},Z_{2x+y+a-b})\\
&\qquad g_2(X_{-x-2y}',Y_{-2x-y}',Z_{2x+2y}')g_2(X_{-x-2y-a-2b}',Y_{-2x-y-2a-b}',Z_{2x+2y+2a+2b}')\\
&\qquad g_2(X_{-x-2y-2a-b}',Y_{-2x-y+a+2b}',Z_{2x+2y-a+b}')g_2(X_{-x-2y+2a+2b}',Y_{-2x-y-a+b}',Z_{2x+2y+a-2b}').
\end{align*}
If $a,b\in\mb{F}_5^n$ are not linearly dependent, then we easily see that all the terms in the product are independent, and linearity of expectation demonstrates
\begin{align*}
\beta(a,b) &= \mb{E}_{x,y\in\mb{F}_5^n}f_1(x,y)f_1(x+a,y+b)f_1(x+2a,y-2b)f_1(x+3a,y-b)(\mb{E}g_2(u,v,w))^8\\
&= \beta_1(a,b)(\mb{E}g_2(u,v,w))^8.
\end{align*}
Otherwise we have cases depending on the $6$ possible linear dependencies. The key point is that, for example,
\[\mb{E}_{\mbf{X},\mbf{Y},\mbf{Z}}g_2(X_0,Y_0,Z_0)g_2(X_0,Y_1,Z_1) = \mb{E}_{u_0,v_0,w_0,v_1,w_1\in[0,1]}g_2(u_0,v_0,w_0)g_2(u_0,v_1,w_1),\]
so in each of these $6$ cases we can reduce to some sort of expectation of $g_2$, or rather, the product of two such terms coming from the independent sets of variables $(\mbf{X},\mbf{Y},\mbf{Z})$ and $(\mbf{X}',\mbf{Y}',\mbf{Z}')$.

Explicit computation yields
\begin{align*}
\beta_2(a,0) &= \beta_1(a,0)(\mb{E}g_2(u_0,v_0,w_0)g_2(u_1,v_1,w_1)g_2(u_2,v_2,w_2)g_2(u_3,v_3,w_3))^2\\
&= \beta_1(a,0)(\mb{E}g_2(u,v,w))^8,\\
\beta_2(a,a) &= \beta_1(a,a)(\mb{E}g_2(u_0,v_0,w_0)g_2(u_1,v_0,w_1)g_2(u_0,v_2,w_2)g_2(u_1,v_2,w_0))\\
&\qquad\qquad\qquad(\mb{E}g_2(u_0,v_0,w_0)g_2(u_1,v_1,w_1)g_2(u_1,v_2,w_0)g_2(u_3,v_0,w_1)),\\
\beta_2(a,-a) &= \beta_1(a,-a)(\mb{E}g_2(u_0,v_0,w_0)g_2(u_0,v_1,w_1)g_2(u_2,v_0,w_1)g_2(u_2,v_1,w_3))\\
&\qquad\qquad\qquad(\mb{E}g_2(u_0,v_0,w_0)g_2(u_1,v_1,w_0)g_2(u_2,v_1,w_2)g_2(u_0,v_3,w_2)),\\
\beta_2(a,2a) &= \beta_1(a,2a)(\mb{E}g_2(u_0,v_0,w_0)g_2(u_1,v_1,w_1)g_2(u_1,v_2,w_0)g_2(u_3,v_0,w_1))\\
&\qquad\qquad\qquad(\mb{E}g_2(u_0,v_0,w_0)g_2(u_0,v_1,w_1)g_2(u_2,v_0,w_1)g_2(u_2,v_1,w_3)),\\
\beta_2(a,-2a) &= \beta_1(a,-2a)(\mb{E}g_2(u_0,v_0,w_0)g_2(u_1,v_1,w_0)g_2(u_2,v_1,w_2)g_2(u_0,v_3,w_2))\\
&\qquad\qquad\qquad(\mb{E}g_2(u_0,v_0,w_0)g_2(u_1,v_0,w_1)g_2(u_0,v_2,w_2)g_2(u_1,v_2,w_0)),\\
\beta_2(0,b) &= \beta_1(0,b)(\mb{E}g_2(u_0,v_0,w_0)g_2(u_1,v_1,w_1)g_2(u_2,v_2,w_2)g_2(u_3,v_3,w_3))^2\\
&= \beta_1(0,b)(\mb{E}g_2(u,v,w))^8.
\end{align*}
Here all variables other than $a,b\in\mb{F}_5^n$ are uniform on $[0,1]$. It is worth noting every complicated product of expectations for the middle four terms are very similar. In particular, they are all equal to the product of the densities of two tripartite $3$-uniform hypergraphs in the symmetric tripartite hypergraphon $g_2$ (with embeddings respecting the tripartition). Furthermore, the two hypergraphs attained for each term is the same pair of hypergraphs if we disregard the data of the tripartition. For the function $g_2$ we will choose, these considerations will not affect the bounds we prove, so we will focus on a single term.

We first define $g_2$. Let $L\ge 1$ and let $\Lambda$ be a subset of $\mb{Z}/L\mb{Z}$ avoiding arithmetic progressions of length $3$ of size $L\exp(-C\sqrt{\log L})$ for some absolute constant $C > 0$. Let $U = V = W = \mb{Z}/L\mb{Z}$ and $H$ be a tripartite graph on $U\times V\times W$ with $(s,s+t)\in U\times V$ an edge when $t\in\Lambda$, $(s,s+t)\in V\times W$ an edge when $t\in\Lambda$, and $(s,s+2t)\in U\times W$ an edge when $t\in\Lambda$. Note that the triangles in $H$ are of the form $(s,s+t,s+2t)$ for $t\in\Lambda$ precisely since $\Lambda$ has no nontrivial arithmetic progressions of length $3$. This has the special property that every edge is in a unique triangle of $H$ (this is the Ruzsa-Szemer\'edi graph).

Now let $g_2(u,v,w) = 1$ if $(\lfloor Lu\rfloor,\lfloor Lv\rfloor,\lfloor Lw\rfloor)\pmod{L}$ encodes one of these triangles. We have
\[\mb{E}g_2(u,v,w) = \frac{L|\Lambda|}{L^3} = L^{-1}\exp(-C\sqrt{\log L}).\]
Now we find
\[\mb{E}g_2(u_0,v_0,w_0)g_2(u_1,v_0,w_1)g_2(u_0,v_2,w_2)g_2(u_1,v_2,w_0).\]
When we refer to $u\in[0,1]$ in the following discussion, we mean $\lfloor Lu\rfloor\pmod{L}$. In order for a term to be $1$, we must have $u_0v_0w_0$, $u_1v_0w_1$, $u_0v_2w_2$, $u_1v_2w_0$ be triangles. But then $u_0w_0,u_0v_2,v_2w_0$ are all edges in $H$, so $u_0v_2w_0$ is also a triangle. This forces $v_2 = v_0$ (as edges are in unique triangles). Then $u_1v_0w_0$ is a triangle, so $u_1 = u_0$. Finally, this means $u_0v_0w_1$ is a triangle so $w_0 = w_1$. Thus the expectation equals
\[L^{-4}\mb{E}g_2(u,v,w) = \frac{L|\Lambda|}{L^7} = L^{-5}\exp(-C\sqrt{\log L}).\]
Next, we find
\[\mb{E}g_2(u_0,v_0,w_0)g_2(u_1,v_1,w_1)g_2(u_1,v_2,w_0)g_2(u_3,v_0,w_1)\le\frac{L^4}{L^8} = L^{-4}\]
since there are at most $L^4$ ways to choose $w_0,u_1,w_1,v_0$. After that, the variables $u_0,v_1,v_2,u_3$ are forced in order to guarantee a nonzero term, since edges are in unique triangles. This demonstrates that
\[\beta_2(a,\lambda a)\le\beta_1(a,\lambda a)L^{-9}\exp(-C\sqrt{\log L})\le L^{-9+o(1)}\]
for $\lambda\in\mb{F}_5^\ast$. Now we take $L$ sufficiently large. First, if $a,b\in\mb{F}_5^n$ are linearly independent, then by \cref{sub:initial-construction} and the above we have
\[\beta_2(a,b) = \beta_1(a,b)(\mb{E}g_2(u,v,w))^8\le\bigg(\frac{73}{80}+O(5^{-n/2})\bigg)\alpha_2^4.\]
On the other hand, if $b = \lambda a$ for $\lambda\in\mb{F}_5^\ast$ then
\[\beta_2(a,b)\le L^{-9+o(1)}\le\frac{1}{2}\alpha_2^4\]
since $\alpha_2$ grows as $L^{-2+o(1)}$ times $\mb{E}f_1(x,y) = 2/5+O(5^{-n/2})$.

Therefore, in expectation our random function satisfies the conclusion of \cref{thm:counterexample} for $(a,b)\in(\mb{F}_5^n\setminus 0)^2$. To obtain a single function which satisfies all these inequalities (of which there are $O(|G|)$), we use a concentration argument. A straightforward extension of \cite[Lemma~3.2]{SSZ20} suffices. Roughly note that each of the above quantities can be controlled using Azuma--Hoeffding inequality on the Doob-martingale where one successively conditions on each of the random variables $X_x,Y_y,Z_z,X_x',Y_y',Z_z'$ for $x,y,z\in\mb{F}_5^n$ as each random variables only participates in a small number of terms. The quality of concentration is at least $\exp(-|G|^c)$ for deviations on the scale of $1/|G|^{1/4}$, which is more than sufficient. Let $h(x,y)$ now denote a specific instantiation of the above defined random function which satisfies
\[\sup_{(a,b)\in (\mb{F}_5^n\setminus 0)^2} \mb{E}[h(x,y)h(x+a,y+b)h(x+2a,y-2b)h(x+3a,y-b)]\le (1-\delta)\mb{E}[h(x,y)]^4\]
for an absolute constant $\delta>0$.

\subsection{Finishing the construction}\label{sub:finishing-construction}
Finally we are in position to fix the directions $(a,b)$ where $a = 0$ or $b = 0$. We now use a randomized version of the construction from \cite[Section~2]{SSZ20} in order to eliminate these final special differences. Define $T = \{0,1,2\}^{\gamma}\times (\mb{F}_5)^{n-\gamma}$, where $\gamma\ge 1$ is an integer to be chosen later. Note that this set has density $\beta = (3/5)^{\gamma}$ but a four term arithmetic progression density of $(3/25)^{\gamma}\le \beta^{4.15}$ noting that the set $\{0,1,2\}$ has no nontrivial four term arithmetic progressions. Now for each $g\in\mb{F}_5^n$ choose a two uniformly random bijective affine transformations $\phi(g),\phi'(g)$ of $\mb{F}_5^n$. Then let
\[f(x,y) = h(x,y)\mbm{1}_{x\in\phi(y)S}\mbm{1}_{y\in\phi'(x)S}.\]
First, we have 
\[\mb{E}_{\phi,\phi'}[\mb{E}_{x,y}f(x,y)] = \mb{E}_{x,y}[h(x,y)\mb{E}_{\phi,\phi'}[\mbm{1}_{x\in\phi(y)S}\mbm{1}_{y\in\phi'(x)S}]] = \beta^2\mb{E}[h(x,y)].\]
Similarly for $(a,b)$ with nonzero coordinates we find that 
\begin{align*}
\mb{E}_{\phi,\phi'}&[\mb{E}_{x,y}f(x,y)f(x+a,y+b)f(x+2a,y-2b)f(x+3a,y-b)]\\ &=\beta^8\mb{E}_{x,y}[h(x,y)h(x+a,y+b)h(x+2a,y-2b)h(x+3a,y-b)]\\
&\le(1-\delta)\beta^8(\mb{E}h(x,y))^4.
\end{align*}
Finally assume that exactly one of $a$ or $b$ is zero. We handle the case when $a$ is zero as the other is analogous. We find
\begin{align*}
\mb{E}&_{\phi,\phi'}[\mb{E}_{x,y}f(x,y)f(x,y+b)f(x,y-2b)f(x,y-b)]\\
&\le\mb{E}_{\phi,\phi',x,y}[\mbm{1}_{x\in\phi(y)S}\mbm{1}_{y\in\phi'(x)S}\mbm{1}_{x\in \phi(y+b)S}\mbm{1}_{y+b\in\phi'(x)S}\mbm{1}_{x\in\phi(y-2b)S}\mbm{1}_{y-2b\in\phi'(x)S}\mbm{1}_{x\in\phi(y-b)S}\mbm{1}_{y-b\in\phi'(x)S}]\\
&= \beta^4\mb{E}_{\phi',x,y}[\mbm{1}_{y\in\phi'(x)S}\mbm{1}_{y+b\in \phi'(x)S}\mbm{1}_{y-2b\in\phi'(x)S}\mbm{1}_{y-b\in\phi'(x)S}]\\
&\le\beta^{8.15}.
\end{align*}
The last line follows since for every random map $\phi'(x)$, there is at most a density of $\beta^{4.15}$ of four term arithmetic progressions in $\phi'(x)S$. Therefore taking $\gamma$ to be a sufficiently large multiple of $\log(1/\mb{E}[h(x,y)])$ (which is of constant order due to \cref{sub:fixing-construction}) guarantees that each of these expectations is at most $(1-\delta)\beta^8(\mb{E}h(x,y))^4$. Furthermore, each of the above densities concentrates (with respect to the randomness of $\phi,\phi'$) with high probability by Azuma--Hoeffding, so \cref{thm:counterexample} follows.

\section{Popular differences for \texorpdfstring{$3$}{3}-point patterns}\label{sec:three-point}
We end by proving popularity for all admissible three-point patterns, namely, \cref{thm:3ptsZ}. As is standard, we actually prove an analogous result over all compact abelian groups. We deduce the version over $\mb{Z}$ from results (mod $N$) using a trick of Green \cite{Gre05}.

These patterns can be handled in a direct Fourier-analytic manner. The proof here is closely modeled after the proof for $3$-term arithmetic progressions. For a compact abelian group $G$ with Haar measure $\mu$, finite $S\subseteq\wh{G}$, and $\delta > 0$, define the Bohr set
\[B(S,\delta) = \{x \in G: \max_{\xi\in S}(\|\xi x\|_{\mb{R}/\mb{Z}}) < \delta\}.\]
We recall the standard fact that $\mu(B(S,\delta)) = \Omega_{|S|,\delta}(1)$ (see \cite[Lemma~4.20]{TV06}). We will further write for a Bohr set $B$ that $\mu_B$ is the uniform measure on it obtained by restricting the Haar measure appropriately.
\begin{theorem}\label{thm:3ptPatterns}
For any $\alpha, \epsilon>0$ there exists $C(\epsilon) > 0$ so that the following holds. Let $G$ be a compact abelian group with Haar probability measure $\mu$. Let $M_1, M_2$ be continuous automorphisms of $G$ such that $M_1-M_2$ is an automorphism. Then for any function $f\colon G\to [0,1]$ with $\int_{g\in G}f(g)\mu(g) \ge \alpha$, there is a measure $\nu_D$ with $\|\nu_D\|_{L^\infty} \le C$ such that
\begin{equation}\label{eq:3ptPatterns}
\int_{x\in G,d\in G}f(x)f(x+M_1d)f(x+M_2d)\nu_D(d)\mu(x)\ge\alpha^3 - \epsilon.
\end{equation}
Moreover, given a Bohr set $B_0 = B(S_0,\rho_0)$, one may take $\nu_D = \mu_B \ast \mu_B$, where $B =  B(S,\rho)$ with $S \supseteq S_0$ and $\rho\le \rho_0$. In this case, the absolute constant $C$ will also depend on $|S_0|, \rho_0$.
\end{theorem}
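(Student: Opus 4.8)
The argument follows the classical Fourier-analytic treatment of three–term progressions (\cite{Gre05}): a structured/pseudorandom splitting of $f$ according to its large Fourier coefficients, the structured part handled by almost periodicity along a Bohr set and the pseudorandom part killed by a generalised von Neumann inequality. I work on the Pontryagin dual $\wh G$, writing $M_i^\dagger$ for the dual automorphism of $\wh G$ determined by $\xi(M_id)=(M_i^\dagger\xi)(d)$; since $M_1,M_2,M_1-M_2$ are automorphisms, so are $M_1^\dagger,M_2^\dagger$ and $(M_1-M_2)^\dagger=M_1^\dagger-M_2^\dagger$, and a continuous automorphism carries Bohr sets to Bohr sets, with dual-transformed frequency set and the same Haar measure.

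Fix a small threshold $\eta=\eta(\alpha,\epsilon)\le\alpha$ and let $\on{Spec}_\eta(f)=\{\xi\in\wh G:|\wh f(\xi)|\ge\eta\}$, which has size at most $\eta^{-2}$ by Parseval (using $\|f\|_2^2\le1$). Put $S=S_0\cup\on{Spec}_\eta(f)\cup M_1^\dagger\on{Spec}_\eta(f)\cup M_2^\dagger\on{Spec}_\eta(f)$, let $B=B(S,\rho)$ for a radius $\rho\le\rho_0$ to be chosen small, and set $g=f\ast\mu_B\ast\mu_B$. Then $g$ is $[0,1]$-valued with $\int g=\int f\ge\alpha$; the remainder $h:=f-g$ satisfies $|\wh h(\xi)|=|\wh f(\xi)|\,|1-\wh\mu_B(\xi)^2|$, which is $O(\alpha\rho)$ when $\xi\in\on{Spec}_\eta(f)$ (then $\xi\in S$, so $\wh\mu_B(\xi)\ge1-2\pi\rho$) and at most $\eta$ otherwise, whence $\|h\|_{U^2(G)}^4\le\sum_\xi|\wh h(\xi)|^4\le O(\rho^4)+\eta^2$ is small, while $\|h\|_\infty\le1$; also $\langle f,g\rangle=\sum_\xi|\wh f(\xi)|^2\wh\mu_B(\xi)^2=\|f\ast\mu_B\|_2^2\ge(\int f)^2$. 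Finally choose $B'=B(S',\rho')$ with $S'=S\cup M_1^\dagger S\cup M_2^\dagger S\supseteq S_0$ and $\rho'\le\rho_0$ taken \emph{extremely} small in terms of $\rho$ and $|S|$, and set $\nu_D=\mu_{B'}\ast\mu_{B'}$. This is a probability measure with $\|\nu_D\|_{L^\infty}=\mu(B')^{-1}$, and since $|S'|,\rho'$ are bounded in terms of $\alpha,\epsilon,|S_0|,\rho_0$, this bound is $O_{\alpha,\epsilon,|S_0|,\rho_0}(1)$.

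Split only the two ``moving'' entries: $\int_{x,d}f(x)f(x+M_1d)f(x+M_2d)\,d\nu_D(d)\,d\mu(x)$ equals $\int_{x,d}f(x)g(x+M_1d)g(x+M_2d)$ plus three terms each carrying a copy of $h$ in the $(x+M_1d)$- or $(x+M_2d)$-slot. For the first term, almost periodicity of $g$ along the support $B'+B'$ of $\nu_D$ — valid because $\wh g$ is concentrated where $\wh\mu_B^2$ is non-negligible, and all such frequencies are dual-$B'$-small once $\rho'$ is small relative to $|S|$ and $\rho$ — replaces $g(x+M_id)$ by $g(x)$ with $L^\infty$-error $\le\epsilon/10$, leaving $\int_x f(x)g(x)^2$, which by the Cauchy--Schwarz inequality for the measure $f\,d\mu$ is $\ge(\int fg)^2/\int f\ge(\int f)^3\ge\alpha^3$. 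For each of the three error terms, $h$ sits in a genuinely moving slot and is bounded with small $U^2(G)$-norm, so a generalised von Neumann inequality for the $\nu_D$-weighted trilinear average bounds it by $O(\|h\|_{U^2(G)})$. Choosing, in this order, $\eta$ small in terms of $\alpha,\epsilon$ and the von Neumann constant, then $\rho\le\rho_0$ small in terms of $\eta,\alpha,\epsilon$, then $\rho'\le\rho_0$ small in terms of $\rho,|S|$, yields \cref{eq:3ptPatterns}. The statement over $[N]$ (\cref{thm:3ptsZ}) then follows by taking $G=\mb Z/N\mb Z$ with $N$ prime (so the matrices remain automorphisms) and transferring back to $[N]$ via the standard device of embedding $[N]$ into a cyclic group whose order is a small constant multiple of $N$.

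The main obstacle is precisely the generalised von Neumann inequality for the \emph{weighted} average: bounding $\int_{x,d}u(x)\,h(x+M_1d)\,w(x+M_2d)\,d\nu_D(d)\,d\mu(x)$ (and its two variants) by $O(\|h\|_{U^2(G)})$ with a constant \emph{independent of the Bohr set $B'$} — merely dominating $\nu_D$ by Haar measure loses a fatal factor $\mu(B')^{-1}$. Here one exploits that $\nu_D=\mu_{B'}\ast\mu_{B'}$ is a convolution square: writing $d=b_1+b_2$ with $b_1,b_2\sim\mu_{B'}$, translating so that the dependence on $b_2$ of two factors appears as a single convolution by $\mu_{B'}$, and then carrying out the two Cauchy--Schwarz steps used to prove \cref{lem:gowers-inequality} (together with a regularity property of $B'$) produces the estimate with the auxiliary structured kernels of bounded $U^2$-dual norm, which is what renders the pseudorandom contributions negligible. (Because the pattern $\{0,M_1,M_2\}$ has a zero shift, one first makes a harmless translation to cast it in the form to which this Cauchy--Schwarz argument applies.)
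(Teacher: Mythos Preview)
Your high-level plan is the same as the paper's --- decompose $f$, handle the structured part by almost periodicity along a Bohr set, and kill the pseudorandom part by a Fourier/von Neumann argument --- but the single-step decomposition $f=g+h$ you use creates a genuine circularity that the paper avoids with a \emph{strong} (iterated) regularity lemma and a three-term decomposition $f=f_1+f_2+f_3$.

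The gap is exactly where you flag it: the weighted von Neumann bound for the $h$-terms. Carrying out the Fourier computation (as the paper does for its $f_3$ terms) gives
\[
\Bigl|\int u(x)\,h(x+M_1d)\,w(x+M_2d)\,\nu_D(d)\,d\mu(x)\Bigr|
\;\le\; \|\hat h\|_{\ell^\infty}\cdot\sum_{\xi}|\hat\mu_{B'}(\xi)|^2
\;=\;\|\hat h\|_{\ell^\infty}\cdot\|\mu_{B'}\|_{L^2}^2
\;\le\;\|\hat h\|_{\ell^\infty}\cdot\mu(B')^{-1},
\]
and the factor $\mu(B')^{-1}$ \emph{cannot} be removed by the convolution-square / double Cauchy--Schwarz manoeuvre you sketch: after translating out one variable, the two remaining ``moving'' factors depend on the second Bohr variable through \emph{different} automorphisms, so one never isolates a single convolution by $\mu_{B'}$. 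In your parameter hierarchy this is fatal: $|S'|$ (and hence $\mu(B')^{-1}$) already depends on $\eta$ via $|\on{Spec}_\eta(f)|\le\eta^{-2}$, so making $\|\hat h\|_{\ell^\infty}\lesssim\eta$ small enough to beat $\mu(B')^{-1}$ simultaneously forces $|S'|$ to grow, and there is no consistent choice of $\eta,\rho,\rho'$.

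The paper's fix is precisely to break this loop by an energy-increment (strong) regularity: one first fixes the frequency set $T$ and radius $\gamma_1$ (bounding the Bohr-set complexity), and only then chooses the pseudorandomness threshold $\gamma_2$ for $f_3$ small relative to $\mu(B')^{-1}$; the middle piece $f_2$ with $\|f_2\|_{L^2}\le\epsilon$ absorbs the discrepancy between the two scales. If you want to keep a two-term decomposition, you would need either an honest $B'$-uniform von Neumann inequality (which you have not established and which is not available in this form) or to iterate your construction --- at which point you have rederived the strong regularity lemma the paper invokes.
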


From this general statement one can easily obtain popular difference results in their standard forms for finite abelian groups (\cref{thm:3ptsFinite}) and for $\mb{Z}$ (\cref{thm:3ptsZ}). The latter answers (as a special case) \cite[Question~1.16]{ABB21} and is the combinatorial analog of \cite[Theorem~1.10]{ABB21}. As much of the proof is identical to that of the three-term arithmetic progression case in \cite{TaoBlog}, we collect the necessary results in the following proposition. Essentially we are extracting a statement of the strong regularity lemma from \cite{TaoBlog}. Closely related statements appear in \cite{Bou86, Gre06}. 
\begin{proposition}[{\cite{TaoBlog}}]
Fix a compact abelian group $G$, parameters $\delta, \epsilon > 0$, and a set $S_0 \subseteq\wh{G}$, as well as growth functions $\omega_1,\omega_2\colon\mb{R}^+\to\mb{R}^+$. Given a function $f\colon G\to [0,1]$, there are $\gamma_1,\gamma_2 > 0$ and a finite set $T$ satisfying $S_0\subseteq T \subseteq\wh{G}$ with the following properties.
\begin{enumerate}[\quad1.]
	\item $|T| = O_{\delta,\epsilon,|S_0|,\omega_1,\omega_2}(1)$.
	\item $\gamma_1\le1/\omega_1(|T|+\delta^{-1}+\epsilon^{-1})$ and $\gamma_2\le1/\omega_2(\gamma_1^{-1})$, whereas $\gamma_2$ is bounded away from $0$ independent of $f$.
	\item We have the decomposition $f = f_1 + f_2 + f_3$, where
	\begin{enumerate}
		\item $f_1, f_2, f_3$ are $1$-bounded.
		\item $f_1$ is nonnegative, has mean $\int f_1~d\mu = \int f~d\mu$, and obeys the bound
		\[f_1(x+r) = f_1(x) + O(\epsilon)\]
		whenever $x \in G$ and $r\in B(T,\gamma_1)$. 
		\item $\|f_2\|_{L^2(G)}\le\epsilon$.
		\item $\|\wh{f}_3\|_{\ell^\infty(\wh{G})}\le\gamma_2$.
	\end{enumerate}
\end{enumerate}
\end{proposition}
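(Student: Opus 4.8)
This is a strong Fourier (equivalently $U^2$-) arithmetic regularity lemma for a general compact abelian group, and the plan is to prove it by the two-level energy-increment argument — the same scheme used for \cref{arith-reg-lem} above, but with the quadratic Bohr factors there replaced by \emph{linear} Bohr structures, and with subgroup cosets replaced by regular Bohr sets. Since the statement is quoted essentially verbatim from \cite{TaoBlog}, I will only describe the mechanism. Two standard external inputs are needed: first, the trivial $U^2$ inverse/weak-regularity step — for a $1$-bounded $g$, deleting from $g$ every Fourier character $\xi$ with $|\wh g(\xi)|\ge\eta$ leaves a function whose largest Fourier coefficient is $<\eta$, and by Parseval at most $\eta^{-2}$ characters are deleted; second, the existence of regular Bohr sets \cite{Bou86}, i.e.\ for every finite $T\subseteq\wh G$ and every $\rho_0>0$ a radius $\rho\in[\rho_0/2,\rho_0]$ with $\mu(B(T,(1+\kappa)\rho))\le(1+O(|T|\kappa))\mu(B(T,\rho))$ for small $\kappa$, which yields $\|\tau_r\mu_{B(T,\rho)}-\mu_{B(T,\rho)}\|_{L^1(G)}=O(|T|\rho'/\rho)$ for $r\in B(T,\rho')$, where $\tau_r$ is translation by $r$.

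The basic building block is a \emph{weak regularity lemma}: given $f$, a finite $T_0\supseteq S_0$, a radius, and $\eta>0$, form the residual $f-\mb E[f|\mc A_0]$ over the $\sigma$-algebra $\mc A_0$ cut out by the partition of $G$ into translates of the current Bohr set, set $T_1=T_0\cup\{\xi:|\wh{(f-\mb E[f|\mc A_0])}(\xi)|\ge\eta\}$ (so $|T_1|\le|T_0|+\eta^{-2}$), choose a regular radius for $T_1$, and let $\mc A_1\supseteq\mc A_0$ be the refined partition algebra. Then $\mb E[f|\mc A_1]$ is $[0,1]$-valued with mean $\int f$, $f-\mb E[f|\mc A_1]$ has largest Fourier coefficient $O(\eta)$ (the error coming from the gap between $\mb E[f|\mc A_1]$ and the corresponding Bohr convolution, controlled by regularity), and refining $\mc A_0$ to $\mc A_1$ increases the energy $\|\mb E[f|\mc A]\|_2^2$, which by the Pythagorean identity equals $\|\mb E[f|\mc A_1]-\mb E[f|\mc A_0]\|_2^2$.

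Now iterate on two levels exactly as in the proof of \cref{arith-reg-lem}: an \emph{outer} loop — at most $\epsilon^{-2}$ steps, since the energy lies in $[0,1]$ and each non-terminal step raises it by $\ge\epsilon^2$ — and, inside each outer step, one application of weak regularity with threshold $\eta^{(i)}$ equal to $1/\omega_2$ of the current complexity, followed by a further refinement of the Bohr radius down to a regular scale $\gamma_1^{(i)}\le 1/\omega_1(|T_i|+\delta^{-1}+\epsilon^{-1})$ fine enough to force the forthcoming structured function to be $\epsilon$-invariant. Tracking the (tower-type but finite) recursion that these nested growth functions impose bounds $|T|=O_{\delta,\epsilon,|S_0|,\omega_1,\omega_2}(1)$, and hence $\gamma_1,\gamma_2$ are bounded away from $0$ independently of $f$. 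On termination, let $\mc A_{\mathrm{fine}}\supseteq\mc A_{\mathrm{coarse}}$ be the last two partition algebras, put $f_1=\mb E[f|\mc A_{\mathrm{coarse}}]\ast\rho$ for a Bohr bump $\rho$ at a scale between the coarse cell-width and $\gamma_1$, $f_2=\mb E[f|\mc A_{\mathrm{fine}}]-f_1$, and $f_3=f-\mb E[f|\mc A_{\mathrm{fine}}]$. Then $f_1$ is $[0,1]$-valued, nonnegative, mean-preserving, and $\epsilon$-invariant under $B(T,\gamma_1)$ everywhere (by the regular-Bohr-set estimate applied to $\rho$); $f_2$ and $f_3$ are differences of $[0,1]$-valued functions, hence $1$-bounded; $\|f_2\|_{L^2}\le\epsilon$ because the outer loop terminated (up to a negligible smoothing error from $\ast\rho$); and $\|\wh{f_3}\|_{\ell^\infty}\le\gamma_2$ because $\mc A_{\mathrm{fine}}$ was produced by weak regularity at the appropriate threshold. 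The ``moreover'' clause falls out of seeding the Bohr data with $(S_0,\rho_0)$, so that $S_0\subseteq T$ and $\rho\le\rho_0$ persist through the iteration.

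The part requiring the most care — and the only genuine departure from the finite-field proof of \cref{arith-reg-lem} — is obtaining the \emph{pointwise, everywhere} invariance $f_1(x+r)=f_1(x)+O(\epsilon)$ for $r\in B(T,\gamma_1)$: over $\mb F_p^n$ the relevant conditional expectation is automatically invariant under the subgroup it is built from, whereas over a general compact group a Bohr-cell conditional expectation jumps near cell boundaries, so one must convolve it with an intermediate-scale Bohr bump and then invoke regularity of the Bohr set to trade the resulting boundary error for an $O(\epsilon)$ fluctuation, all while checking that this smoothing remains compatible with the $1$-boundedness of $f_2$ and with the energy-increment bookkeeping. Everything else — the Parseval complexity counts, the two-level termination, and the substitution of regular Bohr sets for subgroup cosets — is routine, which is why \cite{TaoBlog} can present the statement as a black box.
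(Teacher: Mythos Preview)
The paper does not prove this proposition; it is stated as a black box extracted from \cite{TaoBlog} (and related work of Bourgain and Green), with the only commentary being ``Essentially we are extracting a statement of the strong regularity lemma from \cite{TaoBlog}.'' So there is no in-paper proof to compare against, and your sketch of the standard two-level energy-increment argument is an appropriate reconstruction of what happens in \cite{TaoBlog}.

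One imprecision worth flagging: you repeatedly refer to the $\sigma$-algebra $\mc A_0$ ``cut out by the partition of $G$ into translates of the current Bohr set.'' Over a general compact abelian group, translates of a Bohr set do \emph{not} partition $G$, since Bohr sets are not subgroups. The standard repairs are either (i) to partition $G$ into the joint level sets of the characters in $T$ quantized to resolution $\rho$ (Green's approach in \cite{Gre05}), or (ii) to bypass conditional expectations entirely and take $f_1 = f\ast\mu_B$ (or a nested convolution) from the outset, which is closer to what Tao actually does in \cite{TaoBlog}. You clearly anticipate the underlying issue later when you discuss smoothing near cell boundaries, so this is a matter of making the definition of $\mc A_0$ precise rather than a genuine gap in the strategy. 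With option (ii), the energy-increment bookkeeping runs on $\|f\ast\mu_B\|_2^2$ rather than on $\|\mb E[f|\mc A]\|_2^2$, and the Pythagorean step is replaced by Plancherel; everything else in your outline goes through unchanged.
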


At this point, Tao \cite{TaoBlog} studies progressions with common difference in $B(T,\gamma_1)$. We require a small modification to handle more general matrix patterns. Define
\begin{equation*}
B' = \{r \in G: M_1r, M_2r \in B(T, \gamma_1)\}
\end{equation*}
Observe that $B'$ is the Bohr set $B(T', \gamma_1)$, where 
\begin{equation*}
T' = \{\xi \circ M_1: \xi \in T\} \cup \{\xi \circ M_2 : \xi \in T\}
\end{equation*}
The fact that $\xi \circ M_1$ and $\xi \circ M_2$ are elements of $\wh{G}$ is immediate by the identification $\wh{G} = \text{Hom}(G, \mb{R}/\mb{Z})$ and the fact that $M_1$ and $M_2$ are automorphisms. As an immediate consequence, we have for any $x \in G, d \in B'$ that
\begin{equation}\label{eq:Bohr_set_intersection}
	f_1(x+M_1d), f_1(x+M_2d) = f_1(x) + O(\epsilon).
\end{equation}

With this additional Lipschitz condition in the directions of $M_1$ and $M_2$ within $B'$, the remainder of the proof follows in the standard manner. We will now evaluate $$\int f(x)f(x+M_1d)f(x+M_2d)\mu_{B'} \ast \mu_{B'}(d)\mu(x).$$
Decompose each occurrence of $f$ into $f_1+f_2+f_3$, so that the integral has 27 terms. We will show that all terms other than the $f_1,f_1,f_1$ term are bounded in magnitude by $O(\epsilon)$.

We first bound terms containing $f_2$; we consider the representative case where the second term is $f_2$. 
In this case, take absolute values, and use $|f_a|,|f_b|\le 1$. We have
\[\int f_a(x)f_2(x+M_1d)f_b(x+M_2d)\mu_{B'} \ast \mu_{B'}(d)\mu(x)\le \int \mu_{B'} \ast \mu_{B'}(d) \int |f_2(x+M_1d)|\mu(x).\]
The inner integral is bounded by $\|f_2\|_{L^1} \le \|f_2\|_{L^2} \le\epsilon$. Thus we are left with a bound of $O(\epsilon)$ as desired.

We next bound the terms containing $f_3$. For the sake of simplicity we consider the case where the second term is $f_3$.
Standard Fourier analysis allows us to obtain that 
\begin{align*}
\int &f_a(x)f_3(x+M_1d)f_b(x+M_2d)\mu_{B'} \ast \mu_{B'}(d)\mu(x)\\
&= \sum_{\xi_4} |\widehat{\mu}_{B'}|^2(\xi_4)\sum_{\xi_1M_1+\xi_3(M_1-M_2)=\xi_4}\wh{f}_a(\xi_1) \wh{f}_3(-\xi_1-\xi_3) \wh{f}_b(\xi_3)
\end{align*}
We take absolute values and use $|\wh{f}_3|\le\gamma_2$. When $M_1$ and $M_1 - M_2$ are invertible, Cauchy--Schwarz lets us bound the inner sum by 
\begin{equation*}
\gamma_2\sum_{\xi_1} |\wh{f}_a(M_1^{-1} \xi_1)|^2\sum_{\xi_3} |\wh{f}_b((M_1 - M_2)^{-1}\xi_3)|^2\le\gamma_2
\end{equation*}
where the final inequality follows by Plancherel. Plancherel also implies $\sum_{\xi_4} |\widehat{\mu}_{B'}|^2 = \|\mu_{B'}\|_{L^2}^2 \le \|\mu_{B'}\|_{L^\infty} = O_{|T'|,\gamma_1^{-1}}(1).$ Recalling $|T'| \le 2|T|$ and that $\gamma_1$ is small with respect to $|T|$, we find that
\[\sum_{\xi_4}|\widehat{\mu}_{B'}|^2 = O_{\gamma_1^{-1}}(1).\]
The total contribution is therefore bounded by $\gamma_2\cdot O_{\gamma_1^{-1}}(1)\le\epsilon$ as long as $\omega_1,\omega_2$ grow fast enough.

Finally we are left with the term
\[\int f_1(x)f_1(x+M_1d)f_1(x+M_2d)\mu_{B'} * \mu_{B'}(d)\mu(x).\]
Since $\mu_{B'}\ast\mu_{B'}(d)$ is supported on $B' + B'$, for any $d$ in this support we have
\[f_1(x+M_id) = f_1(x)+O(2\epsilon)\]
for $i\in\{1,2\}$ by triangle inequality and two applications of (\ref{eq:Bohr_set_intersection}) each. Rewriting, and recalling $f_1$ is $[0,1]$-valued, we have
\begin{align*}
\int f_1(x)f_1(x+M_1d)f_1(x+M_2d)&\mu_{B'} \ast \mu_{B'}(d)\mu(x) \\
&= \int f_1^3(x)\mu_{B'} \ast \mu_{B'}(d)\mu(x) + O(\epsilon)\\
&\ge \bigg(\int f_1(x) \mu(x) \bigg)^3+O(\epsilon) &\text{(H\"older's inequality)}\\
&\ge \alpha^3 +O(\epsilon).
\end{align*} 
This completes the proof. \qed

We now deduce a pair of corollaries of this result.
\begin{theorem}\label{thm:3ptsFinite}
For any $\alpha,\epsilon > 0$ there exists $N_0$ so that the following holds. Let $G$ be a finite abelian group of order $N \ge N_0$. Let $M_1, M_2$ be automorphisms of $G$ so that $M_1-M_2$ is an automorphism. Then for any $A \subseteq G$ with $|A| \ge \alpha N$, there is a popular difference $d \neq 0$ so that
\begin{equation*}
\#\{x \in G : x, x+M_1d, x+M_2d \in A\}\ge (\alpha^3 - \epsilon) N.
\end{equation*}
\end{theorem}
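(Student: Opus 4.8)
The plan is to deduce this from \cref{thm:3ptPatterns} applied to $G$, regarded as a compact abelian group equipped with its normalized counting measure $\mu$ (so $\mu(\{x\}) = 1/N$ for every $x\in G$; since $G$ is discrete, the automorphisms of $G$ are exactly the continuous ones). Take $f = \mathbbm{1}_A$, so that $\int_G f\,d\mu = |A|/N\ge\alpha$, and apply \cref{thm:3ptPatterns} with error parameter $\epsilon/2$ and trivial auxiliary Bohr set $B_0$. This yields a probability measure $\nu_D$ on $G$ (of the form $\mu_B \ast \mu_B$) with $\|\nu_D\|_{L^\infty}\le C = C(\epsilon)$, hence $\nu_D(\{d\})\le C/N$ for every $d\in G$, satisfying
\[\sum_{d\in G}\nu_D(\{d\})\,g(d)\ =\ \int_{x\in G,\,d\in G}f(x)f(x+M_1d)f(x+M_2d)\,\nu_D(d)\,\mu(x)\ \ge\ \alpha^3-\epsilon/2,\]
where $g(d) := \tfrac1N\#\{x\in G: x, x+M_1d, x+M_2d\in A\} = \tfrac1N\sum_{x\in G}\mathbbm{1}_A(x)\mathbbm{1}_A(x+M_1d)\mathbbm{1}_A(x+M_2d)\in[0,1]$.

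The only remaining step is to discard the trivial difference $d = 0$. Since $M_1\cdot 0 = M_2\cdot 0 = 0$ we have $g(0)\le 1$, so the $d = 0$ term contributes at most $\nu_D(\{0\})\le C/N$; removing it and using $\sum_{d\neq 0}\nu_D(\{d\})\le 1$ gives
\[\max_{d\neq 0}g(d)\ \ge\ \sum_{d\neq 0}\nu_D(\{d\})\,g(d)\ \ge\ \alpha^3-\frac{\epsilon}{2}-\frac{C}{N}.\]
Choosing $N_0 = N_0(\alpha,\epsilon)$ so that $C/N_0<\epsilon/2$, we conclude that for every $N\ge N_0$ there is a nonzero $d$ with $g(d)\ge\alpha^3-\epsilon$, i.e., $\#\{x\in G: x, x+M_1d, x+M_2d\in A\}\ge(\alpha^3-\epsilon)N$, as desired.

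There is essentially no genuine obstacle here: all the work is contained in \cref{thm:3ptPatterns}, and this corollary is just the routine passage from a $\nu_D$-weighted count to a single nonzero popular difference. The one mild point to check is that $\nu_D$, being $\mu_B\ast\mu_B$ for a Bohr set $B$, is a bona fide probability measure whose pointwise mass is $O(C/N)$, so that the average over $d\neq 0$ still dominates the weighted average up to the harmless $O(1/N)$ loss coming from the atom at $0$.
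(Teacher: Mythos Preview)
Your proof is correct and follows essentially the same route as the paper's own argument: apply \cref{thm:3ptPatterns} to $f=\mathbbm{1}_A$, use the $L^\infty$-bound on $\nu_D$ to see that the $d=0$ term contributes at most $C/N\le\epsilon$ once $N\ge C/\epsilon$, and then pigeonhole over the remaining nonzero $d$. The only cosmetic difference is that you pre-shrink $\epsilon$ to $\epsilon/2$ whereas the paper applies the theorem with $\epsilon$ and adjusts at the end.
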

\begin{proof}
Choose $N_0 = C/\epsilon$, where $C=C(\epsilon)$ is the constant from \cref{thm:3ptPatterns}. Given this choice, the contribution from $d = 0$ in (\ref{eq:3ptPatterns}) is at most $\epsilon N$, and therefore by the pigeonhole principle, some nonzero $d$ in the support of $\mu_D$ must have at least $\alpha^3 - O(\epsilon)$ density of this three-point pattern. This gives the desired result with an adjusted value of $\epsilon$.
\end{proof}

Finally we deduce the statement over the interval $[N]$ which was stated in the introduction.

\begin{proof}[Proof of \cref{thm:3ptsZ}]
Choose $N_0 = C/\epsilon$, where $C=C(\epsilon, |S_0| = 2k, \delta_0 = \epsilon/(2k))$ is the constant from \cref{thm:3ptPatterns}. Embed $S \subseteq[p]^k \hookrightarrow (\mb{Z}/p\mb{Z})^k$ naturally, where $p$ is a prime with $N < p < (1+\epsilon/k)N$. Initialize $\delta_0 = \epsilon/(2k)$, and
\[S_0 = \left\{x \mapsto \frac{(M_1x)_i}{p}\right\}_{i\in[k]} \cup \left\{x \mapsto \frac{(M_2x)_i}{p}\right\}_{i\in[k]}.\]

Given this setup, we apply \cref{thm:3ptPatterns}. Our condition ensures that our matrices have nonzero determinant (and are therefore invertible) modulo $p$. The contribution from $d = 0$ is at most $\epsilon$ as before, and we can find at least $(\alpha^3 - O(\epsilon))N^k$ patterns with common difference $d \neq 0$ for some $d \in B(S,\delta)+B(S,\delta) \subseteq B(S,2\delta)$, where $S \supseteq S_0$ and $\delta \le \delta_0$. (Note that the density may decrease by $O(\epsilon)$ under this embedding.) Plugging in the elements of $S_0$ and using $\delta\le\epsilon/(2k)$ gives us the constraints
\[(M_1x)_i, (M_2x)_i \in [-\epsilon p/k, \epsilon p/k], \text{ for all } i \in [k], x = (x_i)_{i = 1}^k \in B(S,2\delta).\]
Now we attempt to lift these $(\alpha^3 - \epsilon)N^k$ patterns into $\mb{Z}$ by viewing $d$ and each choice of $x$ as integer vectors. Throw out at most $\epsilon N^k$ choices of $x$ for which $x_i \notin [(\epsilon/k) p,(1-\epsilon/k)p]$ for some $i$. For the remainder, we see by triangle inequality in each coordinate that $x, x+M_1d, x+M_2d \in [p]^k$. By assumption, this triple must map to a three-point pattern under our embedding $[p]^k \hookrightarrow (\mb{Z}/p\mb{Z})^k$, so each of these points must have been an element of $A$ originally, and the triple forms a three-point pattern over $\mb{Z}$. We have therefore found $(\alpha^3-O(\epsilon))N^k$ of the necessary $3$-point patterns in $[N]^k$ as desired, which completes the proof upon adjusting $\epsilon$. 
\end{proof}
\begin{remark}
As usual, one can actually find $\Omega_{\alpha,\epsilon}(N^k)$ differences by using Markov's inequality and adjusting $\epsilon$.
\end{remark}


\providecommand{\bysame}{\leavevmode\hbox to3em{\hrulefill}\thinspace}
\providecommand{\MR}{\relax\ifhmode\unskip\space\fi MR }
\providecommand{\MRhref}[2]{%
  \href{http://www.ams.org/mathscinet-getitem?mr=#1}{#2}
}
\providecommand{\href}[2]{#2}

\end{document}